\documentclass[a4paper]{amsart}

\usepackage{lipsum}

\newtheorem{theorem}{Theorem}[section]
\newtheorem{lemma}[theorem]{Lemma}
\newtheorem{proposition}[theorem]{Proposition}

\newtheorem{conjecture}[theorem]{Conjecture}
\newtheorem{question}[theorem]{Question}

\newtheorem{letterthm}{Theorem}

\newtheorem{lettercor}[letterthm]{Corollary}

\newtheorem{letterthmbody}{Theorem}

\newtheorem{lettercorbody}[letterthmbody]{Corollary}

\theoremstyle{definition}

\theoremstyle{remark}
\newtheorem{remark}[theorem]{Remark}

\usepackage[hyperfootnotes=false,colorlinks,
linkcolor={blue!80!black},
citecolor={red!60!black},
urlcolor={blue!80!black}]{hyperref}

\usepackage[hang,flushmargin]{footmisc}

\usepackage[dvipsnames]{xcolor}
\usepackage{graphicx}
\usepackage[a4paper,top=28mm,bottom=32mm,left=28mm,right=28mm]{geometry}

\usepackage{amsthm,amsmath,amsfonts,amssymb}
\usepackage{mathtools}

\usepackage{textpos}

\usepackage{caption,subcaption}
\usepackage{enumitem}
\usepackage{orcidlink}

\usepackage{tikz, tikz-cd}
\usetikzlibrary{calc}

\usepackage{float}

\numberwithin{equation}{section}

\DeclareMathOperator{\CL}{CL}
\DeclareMathOperator{\ad}{ad}

\makeatletter
\renewcommand{\l@subsection}{%
	\@tocline{2}{0pt}{3pc}{4pc}{\small\normalfont}%
}
\makeatother

\title{Conjugator length in nilpotent groups}

\author[J. Deré]{Jonas Deré\ \orcidlink{0000-0003-0758-9754}}
\address{KU Leuven, Etienne Sabbelaan 53, Kortrijk, Belgium.}
\email{jonas.dere@kuleuven.be}

\author[K. Vandermeersch]{Ken Vandermeersch\ \orcidlink{0009-0006-8360-2841}}
\address{KU Leuven, Etienne Sabbelaan 53, Kortrijk, Belgium.}
\email{ken.vandermeersch@kuleuven.be}
\thanks{K.V.\ is supported by Methusalem grant METH/21/03 --- long-term structural funding of the Flemish government.}

\subjclass[2020]{Primary 20F65; Secondary 20F10, 20F18, 20F40, 11J70}

\begin{document}

\begin{abstract} 
For every rational number $\alpha\geq 2$, we construct a 2-step nilpotent group with conjugator length function $\operatorname{CL}(n)\simeq n^\alpha$. We deduce that the nilpotent conjugator length spectrum is dense in $\{0\}\cup[2,\infty)$, even after restricting to groups of nilpotency class at most $2$. Moreover, we show that, despite being a commensurability invariant of nilpotent groups, conjugator length is not a quasi-isometry invariant: for every $m \geq 1$, the cocompact lattices in $\mathsf H(\mathbb R)^m$ realize exactly the growth types $n^2,\ldots,n^{m+1}$. Finally, for every real cubic algebraic number $\theta$, we construct a 2-step nilpotent group such that $n^3 \preceq \operatorname{CL}(n) \preceq n^{3+\varepsilon}$ for every $\varepsilon > 0$, with $n^3 \prec \operatorname{CL}(n)$ if and only if at least one real conjugate of $\theta$ has unbounded partial quotients. Consequently, determining the conjugator length function for arbitrary 2-step nilpotent groups is at least as hard as settling the bounded-partial-quotient problem for real cubic algebraic numbers with two nonreal conjugates.
\end{abstract}

\maketitle 

\setcounter{tocdepth}{1} 


\section{Introduction}

 \noindent  
Let \(G\) be a finitely generated group equipped with a word norm \(|\cdot|\). For conjugate elements \(u,v\in G\), let \(\CL(u,v)\) denote the minimum length of a \emph{conjugator} \(x \in G\) from $u$ to $v$; that is, an element \(x\in G\) such that \(x^{-1}ux=v\). The \emph{conjugator length function} of \(G\) is \[ \CL(n) := \max\bigl\{ \CL(u,v) \mid u,v\in G \text{ are conjugate and } |u|,|v|\leq n \bigr\}. \]
The conjugator length function is a natural quantitative measure associated with the conjugacy problem and has a geometric interpretation in terms of the geometry of annuli and the width of free homotopies; see \cite[\S2]{BRS26} for further motivation.
In the general finitely generated setting, independent recent work of Vandeputte and Goffer--Mihaila--Osin shows that a non-decreasing function occurs as a conjugator length function precisely if it is bounded or grows at least linearly \cite{Vandeputte2026,GMO26}. 

No analogous characterization is known for finitely presented groups. In particular, in their recent survey, Bridson--Riley--Sale pose the following question in the metabelian and polycyclic settings.

\begin{question}[{\cite[Open Problem 4.12]{BRS26}}] \label{Q:1}
Which functions arise as conjugator length functions of finitely presented metabelian groups or finitely generated polycyclic groups?
\end{question}

\noindent Earlier work established polynomial upper bounds for conjugator length in nilpotent groups \cite{JOR10,MMNV22}, while Sale obtained linear upper bounds in several metabelian and polycyclic families \cite{Sal16}. More recently,
Bridson--Riley realized \(n^d\), for every integer \(d\geq2\), in
both the model filiform groups and a family of 2-step nilpotent
groups \cite{BR26Filiform,BR26TwoStep}.

Finitely generated 2-step nilpotent groups are both finitely presented metabelian and polycyclic. 
Our first theorem produces the first nonintegral exponents in this intersection.

\begin{letterthm}\label{thm:denseCLspec}
	Let $\alpha \in [2,\infty) \cap \mathbb Q$. There exists a 2-step nilpotent group $G_\alpha$ with $\CL(n) \simeq n^\alpha$. 
\end{letterthm}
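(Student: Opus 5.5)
I would reduce everything to linear algebra in a 2-step nilpotent Lie algebra and build $G_\alpha$ as a lattice $\exp(\Lambda)$ in a carefully chosen real 2-step nilpotent Lie group $N = V \oplus Z$, with bracket $[\cdot,\cdot]\colon V\times V\to Z$. In such a group, conjugating $u=(a,z)$ by $x=(b,w)$ gives $x^{-1}ux = (a, z+[a,b])$ up to the BCH correction. So $u$ and $v=(a,z')$ are conjugate exactly when $z'-z$ lies in the image of the linear map $\ad_a\colon V\to Z$ restricted to the lattice. Moreover, $\CL(u,v)$ is governed by the smallest lattice vector $b$ with $[a,b]=z'-z$. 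The word norm satisfies $|(a,z)|\simeq \|a\| + \|z\|^{1/2}$. Hence $\CL(n)$ is comparable to
\[ \max\bigl\{\min\{\|b\| : [a,b]=c\} \bigr\}, \]
where the maximum runs over lattice vectors $a$ with $\|a\|\le n$ and over $c\in\ad_a(\Lambda_V)$ with $\|c\|\lesssim n^2$. The first step is to prove this comparison cleanly, with upper and lower bounds up to $\simeq$. The lower bound needs care: it must show that a cheaper conjugator cannot be obtained by changing the $Z$-component or by multiplying by a centralizer element. This works because the centralizer contributes exactly $\ker\ad_a$, which is already accounted for in the minimum over $b$.

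Next I would choose the bracket so that $\ad_a$ degenerates in a controlled way as $a$ varies. The mechanism is the same as in the Bridson--Riley $n^d$ examples. For a lattice vector $a$ of length $n$ that lies near a degenerate direction, the restriction $\ad_a$ has a small minor of size roughly $n^{-k}$. Solving $[a,b]=c$ with $\|c\|\sim n^2$ then forces $\|b\|\sim n^{2}\cdot n^{k}/n^{\ell}$, since by Cramer's rule $b$ is a ratio of integer minors. To get $\alpha=p/q$, I would take a direct-product or central-product structure. In it, $a$ is built from $q$ "slow" coordinates and the bracket couples them to monomial-like constraints. The plan is to realize integer exponents $n^{p}$ with respect to a rescaled length $n^{q}$. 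Concretely, I would look for a group with a distinguished family of elements $a_t$ where $\|a_t\|\simeq t^q$ and the minimal conjugator is $\simeq t^{p}$. One option is to take $V$ with generators $x,y_1,\dots,y_m$ and brackets $[x,y_i]=z_i$ and $[y_i,y_{i+1}]$-type relations weighted to tune the exponents. The upper bound for all $a$ would then follow from a uniform Cramer-rule estimate on the minors of $\ad_a$, expressed as polynomials in the coordinates of $a$.

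The main obstacle is the upper bound: showing that no other pair $(a,c)$ does worse than the designed family. This amounts to a uniform estimate on the integer solution of $[a,b]=c$ over all lattice $a$. The solution space is an affine lattice, and I need a short vector in it. I would first try the following: bound the covolume of $\ker\ad_a\cap\Lambda$ and the size of a particular solution by Cramer's rule. Then I would use a Minkowski-type reduction in the kernel lattice to bring the particular solution down to a length controlled by the largest minor divided by the gcd of maximal minors. The achieved exponent reduces to comparing degrees of these minor polynomials in the coordinates of $a$ under the constraint $\|a\|\le n$. This becomes an optimization over a Newton-polytope-like region. Arranging that its maximum is exactly $p/q$ for every rational $\alpha\ge2$ is where the construction must be tuned. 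Finally, I would check that the sharp lower bound is attained by the designed family $a_t$ together with a $c$ chosen so that the unique short solution is forced to have size $t^{p}$.
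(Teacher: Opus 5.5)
Your first step is sound. Reducing $\CL$ to the size of a shortest lattice solution $b$ of $[a,b]=c$, over $\|a\|\le n$ and $\|c\|\ll n^2$, with $\ker\ad_a$ (plus the centre) absorbing the centralizer freedom, is exactly the reformulation the paper uses after passing to a full Lie ring.

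\textbf{The gap.} You never specify a Lie algebra or compute an exponent. You yourself say that making the optimum equal to $p/q$ ``is where the construction must be tuned''; that tuning is the content of the theorem.

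\textbf{The mechanism fails in your setting.} In lattice bases $\ad_a$ has integer entries, so every nonzero minor has absolute value at least $1$; no minor is of size $n^{-k}$. Consider a split graph algebra such as your fan-type example, with a central generator for each bracket. There the kernel of $\ad_a$, and hence its rank, depends only on which coordinates of $a$ vanish. So the degeneracy locus is a union of rational coordinate subspaces, and a lattice vector either lies on it or stays at distance $\gg 1$ from it. Rational edge weights can be removed by rescaling the central generators, and direct products only take the maximum of the factors' $\CL$. What is needed is a rational form whose real degeneracy locus is \emph{irrational}, so that lattice points can approach it at a polynomial rate.

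\textbf{What the paper does instead.} It obtains such a form by Galois twisting.
\begin{itemize}
\item \emph{Construction.} Write $\alpha=1+r/(2k)$ with $r\ge 2k+3$. Take $K$ totally real and cyclic of degree $r$ with generator $\gamma$, and the Lie ring $\mathcal O X\oplus\bigoplus_{s=1}^k\mathcal O Z_s$ with $[aX,bX]=\sum_s\bigl(a\gamma^s(b)-b\gamma^s(a)\bigr)Z_s$.
\item \emph{Conjugacy equation.} In Minkowski coordinates it becomes edge conditions on $C_r^k$ for $y_i=\sigma_i(a)/\sigma_i(b)$, namely $|y_i-y_{i+s}|=|\sigma_i(\delta_s)|/|\sigma_i(b)\sigma_{i+s}(b)|$. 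The kernel is the rank-one module $\mathcal O\cap\mathbb Q b$.
\item \emph{Lower bound.} Dirichlet's unit theorem gives units $b_n$ with $|\sigma_i(b_n)|\asymp n^{-c}$ on the $2k$ neighbours of vertex $0$ and $\asymp n$ elsewhere, where $c=(r-2k)/(2k)$. This fractional exponent exists only because the unit condition is a product over all embeddings. Choosing $a_n$ so that $y_0$ and $y_{k+1}$ differ by $\asymp n^{1+c}$ forces every solution to have $\|a\|\gg n^{2+c}$.
\item \emph{Upper bound.} Integrality, $|N_{K/\mathbb Q}(b)|\ge 1$, gives the budget $\sum_i x_i\le r$, where $|\sigma_i(b)|=n^{1-x_i}$. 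The $2k$-vertex-connectivity of $C_r^k$ together with Menger's theorem yields a path all of whose edges satisfy $x_i+x_j\le\max\{x_p,x_q\}+r/(2k)$. Helly's theorem for intervals then selects the translate along the kernel.
\end{itemize}
Generic Cramer-rule or Borosh--Flahive--Rubin--Treybig minor bounds give only about $n^{r}$ for this system, so the sharp exponent really depends on this combinatorics, which your plan does not supply.
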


Together with a universal lower bound
\(n^c\preceq\CL(n)\) for nilpotent groups of virtual nilpotency class \(c\geq2\) (see \S\ref{subsec:univ-lower}), we will deduce the following result on the \emph{nilpotent conjugator length spectrum} --- the set of $\alpha \geq 0$ for which there is a nilpotent group with $\CL(n) \simeq n^\alpha$.

\begin{lettercor}
	The nilpotent conjugator length spectrum is dense in \(\{0\}\cup[2,\infty)\). This remains true after restricting to groups of nilpotency class \(\leq 2\). 
\end{lettercor}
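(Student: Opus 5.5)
The corollary follows from Theorem~\ref{thm:denseCLspec} together with two elementary observations. The spectrum is the set of $\alpha\ge 0$ realized as $\CL(n)\simeq n^\alpha$ by some nilpotent group. The plan is to show three things: the spectrum contains $0$; it contains every rational $\alpha\geq 2$, realized in class $\leq 2$; and every value it attains lies in $\{0\}\cup[2,\infty)$. Density in $\{0\}\cup[2,\infty)$ then follows, since $\mathbb Q\cap[2,\infty)$ is dense in $[2,\infty)$.

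\emph{Step 1: the point $0$.} I would take an abelian group, say $\mathbb Z$ or even the trivial group, which has nilpotency class $\leq 1\leq 2$. In an abelian group, conjugate elements are equal, so the trivial conjugator works and $\CL(n)=0$. Under the usual equivalence $\simeq$ (with the $+n$ terms that are standard in this setting), bounded functions, and in particular the zero function, are equivalent to $n^0$. I would check that this matches the paper's convention for $\simeq$. If the convention instead absorbs bounded functions into the linear term, then $0$ should be read as the class of bounded functions, which is how the statement writes $\{0\}$ separately.

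\emph{Step 2: the rationals $\geq 2$.} For each $\alpha\in[2,\infty)\cap\mathbb Q$, Theorem~\ref{thm:denseCLspec} provides a $2$-step nilpotent group $G_\alpha$ with $\CL(n)\simeq n^\alpha$. So $\mathbb Q\cap[2,\infty)$ lies in the spectrum restricted to class $\leq 2$, and it is dense in $[2,\infty)$.

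\emph{Step 3: nothing in $(0,2)$.} This step is not strictly needed for density, but it is what makes $\{0\}\cup[2,\infty)$ the correct closure. I would use the universal lower bound of \S\ref{subsec:univ-lower}: a nilpotent group of virtual nilpotency class $c\geq 2$ satisfies $n^c\preceq \CL(n)$, so any exponent it realizes is at least $c\ge 2$. A virtually abelian nilpotent group (virtual class $\leq 1$) has finite-index abelian subgroup. For such groups I expect conjugator length to be bounded or linear, since conjugation acts through a finite group on a lattice. So it can only realize an exponent in $\{0,1\}$, and the gap $(0,2)$ contains at most the value $1$.

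The main obstacle is this virtually abelian case. If linear conjugator length genuinely occurs, the spectrum contains $1$, and the claim "dense in $\{0\}\cup[2,\infty)$" is still correct as a density statement, just not an exact description of the closure. Since the corollary only asserts density, I would state the proof via Steps 1 and 2, and add Step 3 as a remark explaining why no exponent in $(1,2)$, and for class $\geq 2$ none below $2$, can occur. Everything substantive is carried by Theorem~\ref{thm:denseCLspec}.
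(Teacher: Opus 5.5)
Your Steps 1 and 2 match the paper. The trivial group, or any abelian group, gives the class $0$. Under the paper's convention $f\preceq g$ iff $f(n)\le C\,g(Cn+C)+C$, bounded functions are $\simeq 0$, so your hedging about conventions is unnecessary. Theorem~\ref{thm:denseCLspec} then supplies every rational $\alpha\ge 2$ in class $2$.

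\textbf{The gap is in Step 3, and that step is not optional.} The corollary asserts that the spectrum is a dense \emph{subset} of $\{0\}\cup[2,\infty)$. The paper uses it that way immediately afterwards, to conclude that no finitely generated nilpotent group has $\CL(n)\simeq n^\alpha$ for $1<\alpha<2$.

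You allow that some virtually abelian nilpotent group might have $\CL(n)\simeq n$. If that happened, the spectrum would not lie in $\{0\}\cup[2,\infty)$, and the statement would fail in its intended sense. Rereading it as ``the closure contains $\{0\}\cup[2,\infty)$'' proves something strictly weaker. Your expectation ``bounded or linear'' is also unproved, so your argument does not even exclude exponents in $(0,1)$.

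\textbf{The missing ingredient is commensurability invariance} (Theorem~\ref{theorembody:commensurabilityinvariance}). A finitely generated virtually abelian nilpotent group is commensurable with some $\mathbb Z^r$, hence has $\CL\simeq 0$.

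There is also a direct route. The quotient $G/\tau(G)$ is torsion-free nilpotent and virtually abelian. By uniqueness of roots it is abelian: if $x^p$ and $y^q$ commute, then $x$ and $y$ commute. Proposition~\ref{prop:torsionfreequotientinvariance} then gives $\CL_G\simeq\CL_{G/\tau(G)}=0$.

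Combined with Proposition~\ref{propbody:universalLowerBound} for virtual class $c\ge 2$, this places the spectrum inside $\{0\}\cup[2,\infty)$. That is exactly the paper's argument.

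Your intuition for linear growth comes from $D_\infty$, which is not nilpotent. In a nilpotent group, conjugation by $g$ on a finite-index normal abelian subgroup $A$ is unipotent, since $(\mathrm{Ad}_g-1)^c=0$. It also has finite order, since some power of $g$ lies in $A$. Hence it is trivial on $A\otimes\mathbb Q$, so no dihedral-type behaviour can occur.
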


In particular, no finitely generated nilpotent group has
\(\CL(n)\simeq n^\alpha\) for \(1<\alpha<2\). No finitely presented
group with \(\CL_G(n)\simeq n^\alpha\) for \(1<\alpha<2\) is currently
known. For
Dehn functions, the interval $(1,2)$ is known to be the only gap in the
exponent spectrum \cite{Ols91,Bow95,BB00}; Bridson--Riley--Sale ask whether, by contrast, conjugator-length exponents are dense
in \([1,2]\), see \cite[Open Problem~5.1]{BRS26}. 

For arbitrary finitely presented groups, nonintegral exponents are already known: Bridson--Riley constructed finitely presented groups with \(\CL(n)\simeq n^\alpha\) for a dense set of \(\alpha\in[2,\infty)\) \cite{BR25Snowflake}, after which Gillis--Wagner realized \(n^\alpha\) for every \(\alpha\geq2\) computable in double-exponential time \cite{GW26}. More generally, their construction shows that every Dehn function occurs, up to equivalence, as the conjugator length function of a finitely presented group.

\medskip 

In general, it is known that conjugator length can change on passing to a finite-index subgroup. Indeed, the infinite dihedral group $D_\infty = \mathbb Z \rtimes \mathbb Z/2\mathbb Z$ has $\CL(n) \simeq n$ while $\mathbb Z$ has $\CL = 0$. More strikingly, in \cite{GMO26} it is shown that any two non-decreasing functions of at least linear growth can be realized as the conjugator length functions of two commensurable finitely generated groups. 

The nilpotent setting is markedly more rigid. A central tool throughout the paper is that conjugator length is a commensurability invariant of nilpotent groups (Theorem~\ref{theorembody:commensurabilityinvariance}). By Mal'cev theory, the growth type of $\CL_G$ therefore depends only on the rational Mal'cev Lie algebra \(\mathfrak g_{\mathbb Q}\), which allows us to pass to Lie rings. It need not, however, depend only on the corresponding real Lie algebra \(\mathfrak g_{\mathbb R}\). This already occurs for direct powers of the real Heisenberg group $\mathsf H(\mathbb R)$.

\begin{letterthm}\label{thm:notQIinvariant}
	Let $m \geq 1$. The possible growth types of conjugator length functions of cocompact lattices in $\mathsf H(\mathbb R)^m$ are exactly \(
	\big\{ n^2, n^3, \ldots, n^{m+1} \big\}.
	\) 
	
	In particular, conjugator length is not a quasi-isometry invariant for nilpotent groups.
\end{letterthm}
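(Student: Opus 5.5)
By Theorem~\ref{theorembody:commensurabilityinvariance}, the growth type of $\CL$ for a lattice $\Gamma\le\mathsf H(\mathbb R)^m$ depends only on its rational Mal'cev algebra $\mathfrak g_{\mathbb Q}$. So the plan has three parts: classify the rational forms of $\mathfrak h_{\mathbb R}^m$, compute $\CL$ for each building block, and combine the blocks.

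\textbf{Classification.} Let $\mathfrak h$ be the $3$-dimensional Heisenberg algebra. For a $2$-step algebra $\mathfrak g$ with centre $\mathfrak z=[\mathfrak g,\mathfrak g]$, consider the centroid, i.e.\ the linear maps on $\mathfrak g/\mathfrak z$ and $\mathfrak z$ compatible with the bracket. For $\mathfrak h_{\mathbb R}^m$ the centroid is $\mathbb R^m$, one copy of $\mathbb R$ acting on each factor. For a rational form $\mathfrak g_{\mathbb Q}$, the centroid is a $\mathbb Q$-form $A$ of $\mathbb R^m$, since taking centroids commutes with extension of scalars. Hence $A$ is commutative étale with $A\otimes\mathbb R\cong\mathbb R^m$, so $A=K_1\times\dots\times K_r$ with each $K_i$ totally real and $\sum_i[K_i:\mathbb Q]=m$. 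Decomposing along the idempotents of $A$, each factor $\mathfrak g_i$ is a $K_i$-Lie algebra which becomes $\mathfrak h$ after base change. Such a $K_i$-form of $\mathfrak h$ must be $\mathfrak h(K_i)$ itself, because every nondegenerate alternating form on $K_i^2$ is standard. We conclude
\[
\mathfrak g_{\mathbb Q}\cong\bigoplus_{i=1}^r\operatorname{Res}_{K_i/\mathbb Q}\mathfrak h(K_i).
\]
Conversely, each such algebra has a lattice, namely $\mathsf H(\mathcal O_{K_i})$ in each factor.

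\textbf{Reduction to single blocks.} The conjugator length of a direct product is the maximum over the factors, up to $\simeq$. It therefore suffices to prove $\CL_{\mathsf H(\mathcal O_K)}(n)\simeq n^{d+1}$ for a totally real field $K$ of degree $d$. With this, the growth types that occur are exactly $n^{1+\max_i[K_i:\mathbb Q]}$, and every $d\le m$ is attained by taking a totally real field of degree $d$ together with $m-d$ copies of $\mathbb Q$. This gives precisely $\{n^2,\dots,n^{m+1}\}$. All these groups are quasi-isometric to $\mathsf H(\mathbb R)^m$, which gives the non-invariance statement.

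\textbf{The block $\mathsf H(\mathcal O_K)$.} Write elements as $(a,z)$ with $a\in\mathcal O_K^2$ and $z\in\mathcal O_K$. Conjugating $(a,z)$ by $(b,\ast)$ shifts $z$ by $\omega(a,b)=a_1b_2-a_2b_1$. A word of length $n$ has $a$ of size about $n$ and $z$ of size about $n^2$ in every real embedding. So the problem reduces to solving $\omega(a,b)=c$ with $\|c\|\lesssim n^2$ and controlling the smallest solution $b$.

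For the lower bound, I would use units. Choose $a=(\epsilon,0)$ with $\epsilon$ a product of powers of fundamental units such that $|\sigma_1(\epsilon)|\approx n$ and $|\sigma_j(\epsilon)|\approx n^{-1/(d-1)}$ for $j\ge2$. Then $(a,0)$ has length about $n$. Choose a suitable $c$ of size $n^2$, and note that every solution satisfies $b_2\in\epsilon^{-1}c+\mathcal O_K$-ambiguity. In the small embeddings this forces $b$ to have size about $n^2\cdot n^{1/(d-1)}$ up to free multiples of $a$. Converting the sizes of $b$ in the embeddings into word length in $\mathsf H(\mathcal O_K)$ should then give $n^{d+1}$. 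Making this bookkeeping match the exponent $d+1$ exactly is delicate: the distortion of the abelian factor versus the centre has to be tracked carefully, and I might need to choose $\epsilon$ differently to balance the embeddings.

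For the upper bound, I would use geometry of numbers. The set of solutions $b$ is a coset of the rank-$d$ lattice $\{b:\omega(a,b)=0\}$ together with the ideal $(a_1,a_2)$. I would apply Minkowski's second theorem, with the covolume bounded by $|N(a)|\lesssim n^d$, to produce a solution $b$ of size at most $n^{d+1}$ in word length.

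\textbf{Main obstacle.} I expect the hardest step to be making the upper bound uniform over all $a$, in particular when $a_1$ and $a_2$ generate a nonprincipal ideal or have tiny norm. In those cases I would first act by $\mathrm{SL}_2(\mathcal O_K)$-type automorphisms of bounded size, or use reduction theory of units, to move $a$ into a fundamental domain before applying Minkowski.
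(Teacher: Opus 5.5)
\textbf{Classification and reduction.} Your centroid route to the classification is correct, and it differs from the paper, which cites the Der\'e--Witdouck classification of rational forms of graph Lie algebras. The centroid of $\mathfrak h_{\mathbb R}^m$ is $\mathbb R^m$. It is cut out by rational linear equations, so it has a $\mathbb Q$-form, which is a totally real \'etale algebra $K_1\times\dots\times K_r$. The idempotent splitting, together with uniqueness of nondegenerate alternating forms on $K_i^2$, then gives $\bigoplus_i\mathfrak h(K_i)$. This is more self-contained than the paper's route. It does not give the isomorphism classification, but the statement does not need that. The reduction to a single block $\mathfrak h(K)$ is the same as the paper's. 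The genuine gap is in the block computation, which is exactly where the exponent $d+1$ comes from.

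\textbf{Lower bound.} Your unit has the wrong signature. With $a=(\epsilon,0)$, the equation $\omega(a,b)=c$ forces $b_2=\epsilon^{-1}c$ exactly; there is no ambiguity in $b_2$. Your choice $|\sigma_1(\epsilon)|\approx n$ and $|\sigma_j(\epsilon)|\approx n^{-1/(d-1)}$ gives $\|\epsilon^{-1}\|\approx n^{1/(d-1)}$. So for every $c$ with $\|c\|\le n^2$, the explicit conjugator $(0,\epsilon^{-1}c)$ has length $\ll n^{2+1/(d-1)}$. Pairs of this shape therefore only certify $\CL(n)\gg n^{2+1/(d-1)}$, which equals $n^{d+1}$ only when $d=2$. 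No bookkeeping of distortion can rescue this, since the $b$-coordinates enter the Guivarc'h length linearly. What you need is maximal imbalance, not balance, as in Lemma~\ref{lemma:dirichlet}. Use Dirichlet's unit theorem to take $\epsilon$ with $|\sigma_1(\epsilon)|\asymp n^{-(d-1)}$ and $|\sigma_j(\epsilon)|\asymp n$ for $j\ge2$. Then $\|\epsilon\|\asymp n$ and $\|\epsilon^{-1}\|\asymp n^{d-1}$, and $c=n^2$ forces $\|b_2\|\asymp n^{d+1}$.

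\textbf{Upper bound.} Here you have only a plan, and the obstacle you anticipate is not real. The paper simply invokes Bridson--Riley's bound $\CL\preceq n^{\dim Z+1}$ for 2-step nilpotent groups, using $\dim_{\mathbb Q}Z(\mathfrak h(K))=d$. A direct argument also works, uniformly in $a$:
\begin{enumerate}
\item If $a\neq0$, some coordinate $a_i\neq0$ satisfies $|\sigma(a_i)|\ge|N(a_i)|/\|a_i\|^{d-1}\ge n^{-(d-1)}$ for every embedding $\sigma$.
\item Hence in each embedding the real equation $\det(\sigma(a),p_\sigma)=\sigma(c)$ has a solution with $|p_\sigma|=|\sigma(c)|/|\sigma(a)|\ll n^{d+1}$.
\item The lattice $\mathcal O_K a$ lies in $\ker\omega(a,\cdot)$ and has covering radius $\ll\|a\|\ll n$ in the real span of that kernel.
\item So adding a suitable $\lambda a$, with $\lambda\in\mathcal O_K$, to any integral solution lands within $\ll n$ of $(p_\sigma)_\sigma$.
\end{enumerate}
This is the same rounding mechanism the paper uses in the upper bound for Theorem~\ref{thm:denseCLspec}. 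No Minkowski second theorem, ideal-class considerations, or $\mathrm{SL}_2(\mathcal O_K)$ reduction is needed.
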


It remains open whether a real nilpotent Lie algebra can admit
infinitely many rational forms with pairwise inequivalent conjugator length functions.

Our constructions are arithmetic in nature. Building on \cite{BR26TwoStep}, we use that conjugacy in class $2$ is governed by equations of the form $[x,u] = u-v$ in the Lie ring.  In coordinates, these become systems of integral linear equations in the unknown conjugator $x$. Consequently, conjugator length is governed by the worst-case size of a shortest integral solution among systems arising from bounded elements $u$ and $v$. We construct the rational Lie algebra for which these systems have the prescribed Diophantine behavior. 

For Theorems~\ref{thm:denseCLspec} and \ref{thm:notQIinvariant}, we implement this by using Galois-twisted rational forms of 2-step nilpotent real graph Lie algebras, whose description is proved by \cite{DW23}. The sizes of shortest solutions to the resulting conjugacy equations are controlled by the Archimedean sizes of units in the defining number fields, which we analyze using Dirichlet's unit theorem.

The same reduction reveals that conjugator length can depend on much subtler Diophantine information related to rational approximation and continued fractions. Recall the following classical conjecture.

\begin{conjecture}[Khinchin \cite{Khinchin1949ContinuedFractions}]\label{conjecture-intro:boundedPartialQuotients}
Every real algebraic number \(\theta\) of degree \(\geq3\) has
unbounded partial quotients in its continued fraction expansion.
\end{conjecture}

In fact, no real algebraic number of degree $\geq 3$ is known to have either bounded or unbounded partial quotients. We connect conjugator length in 2-step nilpotent groups to this conjecture.

\begin{letterthm}\label{thm:unboundedpartialquotients}  Let $\theta$ be a real algebraic number of degree $3$. There exists a 2-step nilpotent group $G_\theta$ such that
	\begin{enumerate}
		\item $n^3 \preceq \CL_{G_\theta}(n) \preceq n^{3 + \varepsilon}$ for every $\varepsilon > 0$, 
		\item $n^3 \prec \CL_{G_\theta}(n)$ if and only if a real conjugate of $\theta$ has unbounded partial quotients. 
	\end{enumerate}
\end{letterthm}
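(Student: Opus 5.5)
We construct $G_\theta$ from a 2-step nilpotent rational Lie algebra attached to the cubic field $K=\mathbb Q(\theta)$, and reduce conjugator length to a Diophantine estimate for the lattice $\mathcal O_K$ (or $\mathbb Z[\theta]$) embedded in $\mathbb R^3$ via its real embedding(s). By Theorem~\ref{theorembody:commensurabilityinvariance} it is enough to work with a lattice in the rational Mal'cev algebra, so we may take a convenient Lie ring $L=V\oplus[V,V]$.

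\emph{Choice of Lie ring.} Take $V=\mathbb Q^2\oplus K$ or similar, with bracket $[e_i,a]=\sigma_i(a)$-type components landing in a center $Z\cong\mathbb Q^3$. The aim is that a conjugacy equation $[x,u]=u-v$ with $|u|,|v|\le n$ becomes a linear system $A_u x=b$, with $A_u$ of size polynomial in $n$. The worst case should be: $u$ of length $\sim n$ with horizontal part $(p,q)\in\mathbb Z^2$ and $b$ of size $n^2$, and the system reduces to solving $px_1+qx_2\theta$-type equations in $\mathbb Z[\theta]$. The minimal integral solution is then controlled by how small the linear form $|p-q\theta|$ (and its conjugates) can be compared with $\max(|p|,|q|)$.

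\emph{Bounds.} For the upper bound we bound the shortest solution $x$ of $A_ux=b$ by Cramer's rule / Siegel-type estimates. The key quantity is $\|A_u^{-1}\|$ on the relevant lattice, which is $\asymp 1/\min_i|p-q\sigma_i(\theta)|$ times polynomial factors. Roth's theorem (for algebraic $\theta$) gives $|p-q\theta|\gg q^{-1-\varepsilon}$, which yields $\CL(n)\preceq n^{3+\varepsilon}$. For a conjugate with bounded partial quotients, $|p-q\theta|\gg q^{-1}$, which yields $\preceq n^3$. For the lower bound $n^3$, we use the universal lower bound from \S\ref{subsec:univ-lower} if the class is appropriate, or else an explicit family with $p/q$ a convergent. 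Conversely, if some real conjugate has unbounded partial quotients, there are convergents with $q|q\theta-p|\to0$. Feeding these in as $u$ gives a sequence of conjugate pairs whose minimal conjugator exceeds $Cn^3$ for every $C$, so $n^3\prec\CL$.

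\emph{Where the difficulty lies.} The main obstacle is matching upper and lower bounds exactly. We must show that the minimal conjugator length equals, up to constants, the function $q/(q\|q\theta\|)$-type quantity. That requires choosing $b$ (that is, $v$) realizing the worst-case lattice direction, and showing that adding elements of the centralizer (the kernel of $A_u$) cannot shorten the solution. We also have to handle the case where $\theta$ has two real conjugates versus one real and two complex ones; in the latter case the complex place contributes only bounded-quotient-like behaviour via Dirichlet's unit theorem. Finally, we must check that the norm $|\cdot|$ of a central element of coordinate size $N$ is $\asymp\sqrt N$, and track that distortion.
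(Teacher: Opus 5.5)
Your outline points the same way as the paper: horizontal part in $\mathbb Z+\mathbb Z\theta$ acting by multiplication on $\mathbb Z[\theta]$, small values of $|P-Q\theta_i|$, Roth, and the badly-approximable criterion. But the steps that fix the exponent are left open, and the tools you name would not close them.

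First, the algebra is never actually defined: a bracket with ``$\sigma_i(a)$-type components'' is not rational. The paper uses $\mathfrak g_\theta=(\mathbb Q+\mathbb Q\theta)X\oplus\mathbb Q(\theta)Y\oplus\mathbb Q(\theta)Z$, i.e.\ $[uX,xY]=uxZ$. Its lattice is $(\mathbb Z+\mathbb Z\theta)\oplus\mathbb Z[\theta]\oplus\mathbb Z[\theta]$ and the conjugacy equation is $y\beta-\alpha x=\delta$. Restricting the first factor matters: all of $\mathfrak h(\mathbb Q(\theta))$ has $\CL\simeq n^4$ because of units. The precise target is $\CL\simeq n^2/s_\theta(n)$, where $s_\theta(n)=\min\{\min_i|\sigma_i(\alpha)|:0\neq\alpha\in\mathbb Z+\mathbb Z\theta,\ \|\alpha\|\le n\}$.

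The main gap is the upper bound. Your quantity ``$\|A_u^{-1}\|\asymp 1/\min_i|P-Q\theta_i|$'' only makes sense when $y=0$. For $u=(\alpha,y,z)$ with $y\neq0$, the map $(\beta,x)\mapsto y\beta-\alpha x$ is a $3\times 5$ integer system with a kernel of rank $\geq 2$. Generic Cramer, Siegel or Borosh--Flahive--Rubin--Treybig estimates, with entries $\ll n$ and right-hand side $\ll n^2$, give only $n^4$ (the Bridson--Riley bound $n^{\dim Z+1}$). That proves neither the $n^{3+\varepsilon}$ bound nor the ``only if'' half of (2). The missing idea is $N(\alpha)=\alpha\alpha^\#\in\alpha\mathbb Z[\theta]$:
\begin{itemize}
\item reduce the coordinates of $\beta$ modulo $M=|N(\alpha)|\le n^3$, which preserves solvability;
\item solve $\alpha x=y\beta-\delta$;
\item bound $\|x\|\le\|y\beta-\delta\|/s(\alpha)\ll Mn/s(\alpha)+n^2/s(\alpha)$, using $M/s(\alpha)\le\|\alpha\|^2\le n^2$.
\end{itemize}
The case $\alpha=0$ needs its own $n^3$ bound.

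The lower bound is also not in place. Proposition~\ref{propbody:universalLowerBound} gives only $n^2$ in class $2$, so $n^3$ must come from an explicit family. Your ``choosing $v$ realizing the worst-case direction'' is exactly the step still to be supplied, since an arbitrary $\delta$ of size $n^2$ need not lie in $\alpha\mathbb Z[\theta]$. The paper takes $\alpha$ with $s(\alpha)=s_\theta(n)\ll 1/n$ (by Dirichlet approximation), sets $T=\lfloor n^2/M\rfloor$ and $\eta=T\alpha^\#$, and compares $u=(\alpha,0,0)$ with $v=(\alpha,0,-\alpha\eta)$. Then $\alpha\eta=TN(\alpha)$ is a rational integer of size $\le n^2$, while $\|\eta\|=TM/s(\alpha)\asymp n^2/s_\theta(n)$. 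Because $y=0$ and $\mathbb Z[\theta]$ is a domain, every conjugator has $x$-coordinate exactly $\eta$, so the centralizer issue you raise does not arise. Feeding in convergents of a real conjugate, this construction gives the ``if'' half of (2).

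Two minor points. A cubic field has either three real embeddings or one real embedding and a complex pair, never exactly two real conjugates. The complex place is handled by $|P-Q\theta_i|\ge|Q|\,|\operatorname{Im}\theta_i|$, not by Dirichlet's unit theorem.
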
 
When $\theta$ is an algebraic integer, the 2-step nilpotent group $G_\theta$ may be realized as a normal subgroup of Hirsch length 8 inside the Heisenberg group $H\big(\mathbb Z[\theta] \big)$, whose Hirsch length is $9$. 

 When $\theta$ has two nonreal conjugates, the second assertion in Theorem~\ref{thm:unboundedpartialquotients} detects whether $\theta$ itself has unbounded partial quotients. Consequently, any criterion deciding whether or not $\CL(n) \simeq n^3$ for arbitrary 2-step nilpotent groups would settle the bounded-partial-quotient problem for every real cubic algebraic number with two nonreal conjugates. Thus, already for 2-step nilpotent groups, a complete understanding of conjugator length encounters a classical open problem in Diophantine approximation. 
 
 \subsection{Question}
 Conditional on Conjecture~\ref{conjecture-intro:boundedPartialQuotients}, Theorem~\ref{thm:unboundedpartialquotients} provides the first nilpotent groups with $\CL(n)$ not equivalent to $n^\alpha$ for any $\alpha$. It remains open whether such behavior occurs unconditionally among nilpotent groups. 
 
 \begin{question}
 	Does there exist a nilpotent group with  $\CL(n) \not\simeq n^\alpha$ for any $\alpha$?
 \end{question}
 
 This contrasts with the state of knowledge for Dehn functions of nilpotent groups. Wenger constructed 2-step nilpotent groups whose Dehn functions satisfy $n^2 \prec \delta(n) \preceq n^2 \log n$ in \cite{Wen11}; some of these Dehn functions were recently shown to grow exactly like $n^2 \log n$ in \cite{Vandermeersch2026}. By contrast, it remains open whether the Dehn function of a nilpotent group can grow like $n^\alpha$ for a noninteger $\alpha$. 

\subsection{Notation}\label{subsec:notation} Throughout, $\mathbb N = \{1,2,\ldots\}$. For functions $f,g \colon \mathbb N \to  [0,\infty)$, we write \(f \preceq g\) if there exists an integer \(C\geq 1\) such that
\[
f(n)\le C\,g(Cn+C)+C
\qquad\text{for all } n\in \mathbb N.
\]
We write $f \simeq g$ if $f \preceq g \preceq f$, and write $f \prec g$ if $f \preceq g$ and $f \not\simeq g$.  

In proofs, we use $X \ll Y$ to denote the bound $X \leq C Y$, where $C \geq 1$ is a constant depending only on fixed ambient groups/Lie algebras and chosen norms. If additional dependence is allowed, it is denoted by a subscript. 
 We write $X \asymp Y$ if $X \ll Y \ll X$.

For a number field $K$ of degree $d=[K:\mathbb Q]$, let
$\sigma_1,\ldots,\sigma_d\colon K\hookrightarrow\mathbb C$ denote its
distinct field embeddings. We will often use the Archimedean
	supremum norm
\[
\|x\|_K:=\max_{1\leq i\leq d}|\sigma_i(x)|.
\]
With respect to any fixed $\mathbb Q$-basis of $K$, this norm is
bilipschitz equivalent to the corresponding coefficient supremum norm.
When $K$ is clear from context, we simply write $\|x\|$. 

\addtocontents{toc}{\protect\setcounter{tocdepth}{2}}
\section{Preliminary tools: commensurability invariance and Lie rings/algebras}\label{sec:prelims}

\noindent We call two finitely generated nilpotent groups $G_1$ and $G_2$ \emph{commensurable} if some finite-index subgroup of $G_1$ is isomorphic to some finite-index subgroup of $G_2$. In \S\ref{subsec:comm-invar}, we show that $\CL$ is a commensurability invariant of nilpotent groups (Theorem~\ref{theorembody:commensurabilityinvariance}).  
Therefore, up to $\simeq$, conjugator length depends only on the \emph{rational Mal'cev Lie algebra}~$\mathfrak g_{\mathbb Q}$, see \S\ref{sec:reductiontoLierings}.  

Throughout this paper, our main tool for computing conjugator length functions will be a reformulation of $\CL$ via a full Lie ring $L \subset \mathfrak g_{\mathbb Q}$, which we discuss in \S\ref{sec:reductiontoLierings}.  
In \S\ref{subsec:univ-lower}, we give an application of this: an efficient proof that every rational nilpotent Lie algebra of class \(c\geq2\) admits a uniform \(n^c\) lower bound on its conjugator length function (Proposition~\ref{propbody:universalLowerBound}). Finally, in \S\ref{subsec:rationalforms-graphalgebras}, we recall the classification \cite{DW23} of the rational forms of $c$-step nilpotent real Lie algebras associated to graphs. We will use these graph Lie algebras in the constructions for Theorems~\ref{thm:denseCLspec} and~\ref{thm:notQIinvariant}. 

\subsection{Commensurability invariance} \label{subsec:comm-invar}

\begin{theorem}\label{theorembody:commensurabilityinvariance}
	Let $G_1$ and $G_2$ be commensurable nilpotent groups. Then $\CL_{G_1} \simeq \CL_{G_2}$.
\end{theorem}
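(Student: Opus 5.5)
It suffices to treat two cases: (a) $H \le G$ is a finite-index subgroup of a finitely generated nilpotent group $G$; (b) $G$ is torsion-free nilpotent and we compare it with a group in which it has finite index. We would reduce further, since the torsion subgroup $T(G)$ is finite and normal with $G/T(G)$ torsion-free; we would then show $\CL_G \simeq \CL_{G/T(G)}$ and $\CL_H \simeq \CL_G$ for $H$ of finite index in a torsion-free $G$. Commensurability invariance follows by chaining these equivalences. The inclusion $H \hookrightarrow G$ is a quasi-isometry, so word lengths in $H$ and in $G$ agree up to multiplicative constants on $H$. We use this freely.

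\emph{Step 1: $\CL_H \preceq \CL_G$.} Take $u,v \in H$ conjugate in $H$ with $|u|,|v|\le n$. They are conjugate in $G$ by some $g$ with $|g|\ll \CL_G(Cn)$. The set of $G$-conjugators is the coset $C_G(u)g$, and we need to find an element of $C_G(u)g\cap H$ near $g$. Since $C_G(u)g$ meets $H$, we can write $g=zh_0$ with $z\in C_G(u)$ and $h_0 \in H$. We would pass to a finite-index normal subgroup $N\le H$ of $G$ and pick coset representatives $t$ of $N$ in $G$. The key fact needed is \emph{uniform}: there should be a finite set $F\subset G$ independent of $u$ such that $C_G(u)\subseteq (C_G(u)\cap H)F$ in a controlled way. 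We expect this to be the main obstacle, because $[C_G(u): C_H(u)]$ is bounded by $[G:H]$, but the coset representatives may be long. Our first attempt would use nilpotency via Mal'cev completion when $G$ is torsion-free. There $g^m$-roots are controlled: for $k=[G:N]!$ we have $x^k\in N$ for all $x$, and $\sqrt{C_G(u)}$ behaves well. Concretely, we would prove that any $z\in C_G(u)$ with $zg\in \ldots$ can be replaced by $z'\in C_G(u)$ of length $\ll |g|$ (up to polynomial distortion of powers in nilpotent groups) with $z'g\in H$. One could choose $z'$ among $z z_1$ with $z_1\in C_G(u)$ ranging over a bounded-index set, using that $C_G(u)/ (C_G(u)\cap N)$ is a finite group of order at most $[G:N]$. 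Representatives can then be taken among short elements of $C_G(u)$, since a ball of radius $\ll |g|$ in $C_G(u)$ around $z$ already surjects onto that finite quotient once it is nonempty. Equivalently, we would replace $g$ by the nearest point of the coset in $H$, arguing that the image of $C_G(u)g$ in the finite set $G/N$ is a coset of a subgroup. This subgroup is generated by images of centralizer elements of length bounded in terms of $|g|$ and $[G:N]$.

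\emph{Step 2: $\CL_G \preceq \CL_H$.} Take $u,v\in G$ conjugate, $|u|,|v|\le n$, with $N\trianglelefteq G$ of finite index $k$ contained in $H$. In the torsion-free case, $u^k,v^k\in N$ are conjugate, and $|u^k|\ll n$. Conjugacy of $u^k$ and $v^k$ is witnessed in $N$ up to a coset representative, so $\CL_N(Cn)$ gives $x$ with $x^{-1}u^kx=v^k$ and $|x|\ll \CL_N(Cn)+1$. Uniqueness of roots in torsion-free nilpotent groups gives $x^{-1}ux=v$. Composing with Step 1 applied to $N\le H$ yields $\CL_G\preceq \CL_H$.

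\emph{Step 3: torsion.} For $\pi\colon G\to \bar G=G/T$, first take $u,v\in G$ conjugate. A short conjugator of $\pi(u),\pi(v)$ lifts to $x$ with $x^{-1}ux\in vT$, which lies in a finite set. A bounded correction would then require conjugating within the finite-by-$\langle\cdot\rangle$ structure. We would pass to a torsion-free finite-index subgroup $G'$ with $G'\cap T=1$, which exists by residual finiteness. This reduces Step 3 to Steps 1–2, since $G'$ embeds with finite index in both $G$ and $\bar G$.
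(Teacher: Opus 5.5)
Your decomposition (remove torsion, then prove finite-index invariance for torsion-free groups) is the paper's. Your Step 2 (write the conjugator as $hr$ with $h\in N$, pass to $k$-th powers, use uniqueness of roots) is also the paper's argument. The genuine gap is Step 1. You correctly flag it as the crux, but your proposed resolution is false. The claim is that a ball of radius $\ll|g|$ in $C_G(u)$ meets every coset of $C_G(u)\cap N$, or equivalently that some $z'\in C_G(u)$ with $|z'|\ll|g|$ has $z'g\in H$. Here is a counterexample.

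Let $G$ be the discrete Heisenberg group with generators $x,y$ and $z=[x,y]$. Take the normal index-$2$ subgroup $H=N=\langle x^2,y,z\rangle$. Put $w=x^ay^b$ with $a$ odd, $\gcd(a,b)=1$ and $|a|,|b|\asymp n$. Let $u=w^2$, $g=x$, $v=x^{-1}ux$.

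Then $C_G(u)=\langle w,z\rangle$. The $H$-conjugators from $u$ to $v$ are exactly the elements $w^kz^cx$ with $k$ odd. All of them have length at least $|b|\asymp n$, although $|g|=1$. The short elements of $C_G(u)$ are powers of $z$, which all lie in $N$.

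So the correction must be controlled in terms of $u$. To get $\CL_H\preceq\CL_G$, it must in fact be controlled by $\CL_G$ itself, not by some unrelated polynomial in $|u|$.

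The missing idea is the paper's Lemma~\ref{lemma:centralizer-cosets-uniform}. For torsion-free $G$, $C_G(u)$ is a cocompact lattice in the connected group $C_{\mathsf G_{\mathbb R}}(u)$. Its covering radius $\Delta$ is bounded by conjugator length:

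\begin{enumerate}
\item Take $y\in C_{\mathsf G_{\mathbb R}}(u)$ at distance about $\Delta$ from $C_G(u)$, and $x\in G$ within bounded distance of $y$.
\item Then $b=x^{-1}ux$ satisfies $|b|\ll|u|+1$, since $y$ centralizes $u$ and $y^{-1}x$ is bounded.
\item Every conjugator from $u$ to $b$ lies in $C_G(u)x$, so it has length $\gg\Delta-O(1)$.
\end{enumerate}

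Hence $\Delta\ll\CL_G(K|u|+K)$. It follows that $C_G(u)$ is generated by elements of length $\ll\Delta$. Every coset of a subgroup of index $\le M$ then has a representative of length $\ll M\Delta$. With $M=[G:N]$, this is exactly the uniform statement Step 1 needs.

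Step 3 also has a gap. Passing to a torsion-free $G'\le G$ of finite index with $G'\cap T=1$ does not reduce the torsion case to Steps 1--2. The direction $\CL_G\preceq\CL_{G'}$ would need Step 2 with ambient group $G$, which has torsion. From $y^{-1}u^ky=d^k$ you only get $y^{-1}uy\in dT$, because roots are unique only in $\bar G$. Your Mal'cev argument for Step 1 likewise assumes a torsion-free ambient group.

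So the hard direction $\CL_G\preceq\CL_{\bar G}$ still requires a short conjugator from $vt$ to $v$, for $t\in T$. The paper obtains this as follows. The images in $\bar G$ of all conjugators from $vt$ to $v$ form a coset of $\pi(C_G(v))$. This subgroup has index at most $\#T$ in $C_{\bar G}(\bar v)$. Applying the same centralizer-coset bound in $\bar G$ gives a short representative, and a short lift of it is the required conjugator.

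The opposite direction $\CL_{\bar G}\preceq\CL_G$ is just lifting a conjugator and absorbing the bounded torsion element.
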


\noindent The proof has two parts. 
\begin{itemize}
\item First, we remove torsion. Proposition~\ref{prop:torsionfreequotientinvariance} shows that $\CL_G \simeq \CL_{G/\tau(G)}$ where $\tau(G)$ is the finite normal subgroup of torsion elements.
\item Then, we prove finite-index invariance. Proposition~\ref{prop:finiteindexinvariance} says that if $\Gamma$ is finitely generated torsion-free nilpotent and $H\leq \Gamma$ has finite index, then $\CL_\Gamma \simeq \CL_H$.
\end{itemize}
Together, Propositions~\ref{prop:torsionfreequotientinvariance} and~\ref{prop:finiteindexinvariance} imply Theorem~\ref{theorembody:commensurabilityinvariance}, since commensurable finitely generated nilpotent groups have commensurable torsion-free quotients. 

\medskip

\noindent \textit{Removing torsion.} \label{sec:torsion}
The subset of torsion elements $\tau(G)$ in a finitely generated nilpotent group $G$ is a finite normal subgroup. We call $\pi\colon G \to \Gamma := G / \tau(G)$ the torsion-free quotient. 

\begin{proposition}\label{prop:torsionfreequotientinvariance}
Let $G$ be a finitely generated nilpotent group. Then $\CL_{G} \simeq \CL_{G/\tau(G)}$. 
\end{proposition}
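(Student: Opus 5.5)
We prove the two inequalities $\CL_\Gamma \preceq \CL_G$ and $\CL_G \preceq \CL_\Gamma$ separately, where $\Gamma = G/\tau(G)$ and $\pi\colon G\to\Gamma$ is the quotient map. Write $T=\tau(G)$, which is finite and normal. Choose a finite generating set $S$ for $G$ and equip $\Gamma$ with the word norm for $\pi(S)$. Then $|\pi(g)|\le |g|$. Conversely, every $\gamma\in\Gamma$ with $|\gamma|\le n$ has a lift $g$ with $|g|\le n$.

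\emph{Upper bound for $\Gamma$.} Let $\bar u,\bar v\in\Gamma$ be conjugate with $|\bar u|,|\bar v|\le n$, and lift them to $u,v$ of length $\le n$. If $\bar x^{-1}\bar u\bar x=\bar v$, then for any lift $x$ we have $x^{-1}ux=vt$ with $t\in T$. So $u$ is conjugate to $vt$ for some $t\in T$, and $|vt|\le n+C_T$, where $C_T=\max_{t\in T}|t|$. Hence $\CL(u,vt)\le \CL_G(n+C_T)$. The image of such a conjugator conjugates $\bar u$ to $\bar v$ and is no longer. This gives $\CL_\Gamma(n)\le\CL_G(n+C_T)$.

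\emph{Upper bound for $G$; this is the main step.} Let $u,v\in G$ be conjugate with $|u|,|v|\le n$. By definition of $\CL_\Gamma$ there is $\bar y$ with $|\bar y|\le \CL_\Gamma(n)$ and $\bar y^{-1}\bar u\bar y=\bar v$. Lift it to $y$ of the same length. Then $y^{-1}uy=vt$ for some $t\in T$, and the remaining task is to correct the torsion error $t$ at bounded cost. Since $vt$ and $v$ are conjugate in $G$, the obstacle is to find a conjugator between $v$ and $vt$ whose length is bounded by $C\,\CL_\Gamma(Cn+C)+C$ uniformly. A naive conjugator could have length as large as $\CL_G$ itself, which would make the argument circular.

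The plan is to reduce to the centralizer of $\bar v$. Any $z$ with $z^{-1}vtz=v$ projects into $C_\Gamma(\bar v)$. Moreover, the map $z\mapsto z^{-1}vz\,v^{-1}$ on the preimage of $C_\Gamma(\bar v)$ takes only values in $T$, which is finite. So we need a uniformly bounded correction, and I would use the following standard facts.
\begin{itemize}
\item In finitely generated nilpotent groups, there is a torsion-free finite-index normal subgroup $N$ with $N\cap T=1$. Then $N$ embeds into $\Gamma$ and centralizes $T$ up to finite index. After passing to a finite-index subgroup, we may assume $N$ centralizes $T$.
\item For $z\in N$ with $\bar z\in C_\Gamma(\bar v)$, write $[v,z]=v^{-1}z^{-1}vz\in T$. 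Using that $T$ is central in $N$-directions, the map $z\mapsto [v,z]$ is a homomorphism on $\pi^{-1}(C_\Gamma(\bar v))\cap N$ into $T$.
\end{itemize}
Since $|T|$ is bounded, the elements $z^{|T|}$ centralize $v$. Hence every relevant coset of the centralizer has a representative within a bounded multiple of one another. Concretely, the conjugator $y$ can be modified by a $z$ with $z^{-1}(vt)z=v$, and $z$ ranges over a set on which "$vt$ versus $v$" depends only on $z$ modulo a finite-index subgroup of the centralizer.

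The cleanest route avoids uniformity over all $v$ as follows. Replace $y$ by $y w$, where $w$ ranges over $\pi^{-1}(C_\Gamma(\bar v))$. Any conjugator $x$ from $u$ to $v$ in $G$ satisfies $\bar x\in\bar y\,C_\Gamma(\bar v)$. So the goal is a short element of $y^{-1}x\,C_G(v)$. I would first pass to the finite-index subgroup $N$ via Proposition~\ref{prop:finiteindexinvariance}-style length comparisons, replacing $u,v$ by bounded-length coset adjustments. In $N\cong\pi(N)\le\Gamma$ there is no torsion, so conjugators in $N$ correspond exactly to conjugators in $\pi(N)$.

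The remaining technical point, which I expect to be the hardest, is handling conjugacy classes of $G$ that do not meet $N$. Here I would use that $G/N$ is finite and apply pigeonhole on the finitely many coset data to absorb them into constants. The resulting bound is $\CL_G(n)\le C\,\CL_\Gamma(Cn+C)+C$.
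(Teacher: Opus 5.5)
You prove $\CL_\Gamma\preceq\CL_G$ correctly, by the paper's own argument. In the other direction you correctly reduce to correcting $y^{-1}uy=vt$. The projections of all conjugators from $vt$ to $v$ form a coset of $H:=\pi(C_G(v))$. By your observation on $z\mapsto z^{-1}vzv^{-1}$, $H$ has index at most $\#\tau(G)$ in $C_\Gamma(\bar v)$.

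\textbf{The gap.} You never produce a \emph{short} element of this coset, and that is the entire content of the step.

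Bounded index, or the fact that $|T|$-th powers centralize $v$, gives no length bound by itself. If $C\le\Gamma$ is ``long and thin'', a coset of a bounded-index subgroup can consist only of long elements: for example $C=\mathbb Z(1,M)\le\mathbb Z^2$ and the coset $(1,M)+2C$.

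Since $C_\Gamma(\bar v)$ runs through infinitely many subgroups as $v$ varies, you need control of its geometry that is uniform in $v$ and of size $\ll\CL_\Gamma(K|\bar v|+K)$. So uniformity over $v$ cannot be avoided, and ``pigeonhole on finitely many coset data'' does not supply it.

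The detour through a torsion-free $N$ does not help either. Proposition~\ref{prop:finiteindexinvariance} is for torsion-free groups, and its ``group $\preceq$ subgroup'' direction relies on unique roots, which fail in $G$. Passing to $e$-th powers in $N$ only gives a short conjugator taking $u$ to some $w$ with $w^e=v'^e$, where $v'$ is a bounded conjugate of $v$. Uniqueness of roots in $\Gamma$ then places $w$ in $v'\tau(G)$, which is exactly the correction problem you started with.

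\textbf{The missing ingredient} is the paper's Lemma~\ref{lemma:centralizer-cosets-uniform}.
\begin{enumerate}
\item View $\Gamma$ as a cocompact lattice in its real Mal'cev completion $\mathsf G_{\mathbb R}$. Then $C_\Gamma(g)$ is a cocompact lattice in the connected group $C_{\mathsf G_{\mathbb R}}(g)$; call its covering radius $\Delta$.
\item Pick $y\in C_{\mathsf G_{\mathbb R}}(g)$ at distance $\ge\Delta-1$ from $C_\Gamma(g)$, and $x\in\Gamma$ within distance $A$ of $y$.
\item Then $b:=x^{-1}gx$ satisfies $|b|\ll|g|+1$, since $y$ centralizes $g$ and $y^{-1}x$ is bounded.
\item Every conjugator from $g$ to $b$ lies in $C_\Gamma(g)x$, hence has length $\gg\Delta-A-1$. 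Thus $\Delta\ll\CL_\Gamma(K|g|+K)+1$.
\item $C_\Gamma(g)$ is generated by elements in a $3\Delta$-ball, so every coset of a subgroup of index $\le M$ has a representative of length $\ll M\Delta$.
\end{enumerate}
Apply this with $g=\bar v$ and $H=\pi(C_G(v))$. It gives a conjugator $z$ from $vt$ to $v$ with $|z|\ll\CL_\Gamma(Cn+C)+1$ (Lemma~\ref{lemma:correction}). Then $yz$ is the required short conjugator from $u$ to $v$.
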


Fix word metrics $|\cdot |_G, |\cdot|_\Gamma$ for $G$ and $\Gamma = G / \tau(G)$. Since $\tau(G)$ is finite, there is a constant $L \geq 1$ such that 
\[
\frac 1 L \, |\pi(g)|_\Gamma \leq |g|_G \leq L \, |\pi(g)|_\Gamma + L \qquad \text{for all }g \in G. 
\]
\begin{lemma}
	\(\CL_\Gamma \preceq \CL_G\).
\end{lemma}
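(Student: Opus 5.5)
The plan is to lift a pair of conjugate elements of $\Gamma$ to $G$, correct the lift of the target by a torsion element so that the two lifts become conjugate in $G$, take a short conjugator in $G$, and push it back down to $\Gamma$.

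Fix $n$ and let $\bar u,\bar v\in\Gamma$ be conjugate with $|\bar u|_\Gamma,|\bar v|_\Gamma\le n$. Choose arbitrary preimages $u,v\in G$. By the displayed comparison inequality, $|u|_G,|v|_G\le Ln+L$.

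The main obstacle is that $u$ and $v$ need not be conjugate in $G$: conjugacy in $\Gamma$ only gives conjugacy in $G$ up to $\tau(G)$. To fix this, take any $\bar x\in\Gamma$ with $\bar x^{-1}\bar u\bar x=\bar v$ and any preimage $x\in G$. Then $\pi(x^{-1}ux)=\pi(v)$, so $x^{-1}ux=vt$ for some $t\in\tau(G)$. Set $v':=vt$. Then $u$ and $v'$ are conjugate in $G$ and $\pi(v')=\bar v$. Since $\tau(G)$ is finite, put $C:=\max_{s\in\tau(G)}|s|_G$. Then $|v'|_G\le Ln+L+C$. Note that $x$ itself may be long; it is used only to produce $t$, and its length plays no role.

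Now $u$ and $v'$ are conjugate elements of $G$ of length at most $Ln+L+C$. Hence there is a conjugator $y\in G$ with $y^{-1}uy=v'$ and $|y|_G\le \CL_G(Ln+L+C)$. Here we use that $\CL_G$ is non-decreasing by definition, since it is a maximum over a growing set. Applying $\pi$ gives $\pi(y)^{-1}\bar u\,\pi(y)=\bar v$. The left half of the comparison inequality gives $|\pi(y)|_\Gamma\le L|y|_G$.

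Therefore $\CL_\Gamma(\bar u,\bar v)\le L\,\CL_G(Ln+L+C)$. Taking the maximum over all such pairs yields $\CL_\Gamma(n)\le C'\,\CL_G(C'n+C')+C'$ with $C':=\max(L,\,L+C)$, which is the relation $\CL_\Gamma\preceq\CL_G$.
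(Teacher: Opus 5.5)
Your proof is correct and follows essentially the same route as the paper: lift to $G$, correct the target by some $t\in\tau(G)$ so that the lifts become conjugate in $G$, take a shortest conjugator there, and project it to $\Gamma$. The only cosmetic difference is your extra constant $C$, which is unnecessary: the comparison inequality holds for every $g\in G$, so it applies directly to $vt$ (whose image is $\bar v$) and already gives $|vt|_G\le Ln+L$.
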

\begin{proof}
Let \(\bar a = \pi(a), \bar b = \pi(b) \in \Gamma \) be conjugate with $|\bar a |_\Gamma, |\bar b|_\Gamma \leq n$. Then there exists $t \in \tau(G)$ such that $a$ and $bt$ are conjugate in $G$. Since $\tau(G)$ is finite, the preceding estimates give $|a|_G, |bt|_G \leq Ln +L$.

Therefore, there exists a conjugator $z \in G$ from $a$ to $bt$ with 
$|z|_G \leq \CL_G(Ln + L).$ 
Since \(\pi(z)\) conjugates \(\bar a\) to \(\bar b\) in \(\Gamma\), and $|\pi(z)|_\Gamma \leq L |z|_G $, this proves $\CL_\Gamma(n) \preceq \CL_G(n).$
\end{proof}

For the converse inequality and for the difficult direction in finite-index invariance and the reduction to Lie rings in \S\ref{sec:reductiontoLierings}, we shall use the following lemma. 
Its proof passes to the real Mal'cev completion $\mathsf G_{\mathbb R}$ of $\Gamma$, bounds such representatives in terms of the covering radius of $C_\Gamma(g)$ in $C_{\mathsf G_{\mathbb R}}(g)$, and then bounds that radius by $\CL_\Gamma(|g|_\Gamma)$.

\begin{lemma}\label{lemma:centralizer-cosets-uniform}
	Let $M\geq 1$. There is a constant $K\geq 1$ such that for every $g \in \Gamma$, every coset of every subgroup $H\subset C_\Gamma(g)$ with $[C_\Gamma(g):H]\leq M$ contains a representative $c$ satisfying
	\[
	|c|_\Gamma \leq K\CL_\Gamma(K|g|_\Gamma+K)+K.
	\]
\end{lemma}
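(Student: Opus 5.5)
We pass to the real Mal'cev completion $\mathsf G_{\mathbb R}$ of $\Gamma$, a simply connected nilpotent Lie group in which $\Gamma$ is a cocompact lattice. Fix a left-invariant Riemannian (or Carnot--Carathéodory) metric $d$ on $\mathsf G_{\mathbb R}$. By the Švarc--Milnor lemma, $|\cdot|_\Gamma$ is bilipschitz to $d(1,\cdot)$ restricted to $\Gamma$, up to additive constants.

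For $g\in\Gamma$, the centralizer $C_{\mathsf G_{\mathbb R}}(g)$ is a connected closed subgroup. Indeed, it is the exponential of the Lie subalgebra $\{X : \mathrm{Ad}(g)X=X\}$, since $\exp$ is a diffeomorphism and $g\exp(X)g^{-1}=\exp(\mathrm{Ad}(g)X)$. Moreover, it is rational with respect to the $\mathbb Q$-structure, because $\log g\in\mathfrak g_{\mathbb Q}$. Hence $C_\Gamma(g)=\Gamma\cap C_{\mathsf G_{\mathbb R}}(g)$ is a lattice in it. Let $R(g)$ be its covering radius: the least $R$ such that every point of $C_{\mathsf G_{\mathbb R}}(g)$ lies within $d$-distance $R$ of $C_\Gamma(g)$.

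The plan has two steps. The first reduces the claim to bounding $R(g)$. Take $H\le C_\Gamma(g)$ of index at most $M$ and fix a coset $yH$. Take the finitely many coset representatives of $C_\Gamma(g)/H$ and choose them by a ``spread'' argument. Concretely, $H$ is itself a lattice in $C_{\mathsf G_{\mathbb R}}(g)$, with covering radius at most $R(g)+D_M$, where $D_M$ controls a short transversal. To get this, I would show every coset of $H$ in $C_\Gamma(g)$ has a representative of length $\ll_M R(g)+1$. A nilpotent-group argument does this: the words in a generating set of $C_\Gamma(g)$ consisting of elements of length $\ll R(g)+1$ of length at most $M$ already hit every coset of $H$. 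This is the usual Schreier-graph argument, since the Schreier graph on at most $M$ vertices has diameter less than $M$. Here I use that $C_\Gamma(g)$ is generated by elements of length $\le 2R(g)+C$, which holds for any lattice whose covering radius is $R$ in a connected group with a geodesic metric. Then the shortest representative of $yH$ has length $\ll_M R(g)+1$, so it suffices to show $R(g)\ll \CL_\Gamma(K|g|+K)+K$.

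The second step bounds $R(g)$, and is the main obstacle. Given $h\in C_{\mathsf G_{\mathbb R}}(g)$, approximate $h$ by a lattice point $\gamma\in\Gamma$ with $d(h,\gamma)\le D$, where $D$ is the covering radius of $\Gamma$ in $\mathsf G_{\mathbb R}$. Then $\gamma^{-1}g\gamma = \delta^{-1}h^{-1}gh\delta=\delta^{-1}g\delta$, where $\delta=h^{-1}\gamma$ has norm at most $D$. The difficulty is that the word length of $g'=\gamma^{-1}g\gamma$ is \emph{not} obviously bounded by $K|g|+K$, because conjugating by a bounded real element distorts lengths only boundedly in the real metric. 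To handle this, I would use that $d(1,\delta^{-1}g\delta)\le d(1,g)+2D$ by left-invariance and the triangle inequality. Combined with the Švarc--Milnor comparison, this gives $|g'|_\Gamma\le K|g|_\Gamma+K$.

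Now apply the definition of $\CL_\Gamma$ to the conjugate pair $g,g'$. This gives $x\in\Gamma$ with $x^{-1}gx=g'$ and $|x|_\Gamma\le \CL_\Gamma(K|g|+K)$. Then $\gamma x^{-1}\in C_\Gamma(g)$, and
\[
d(h,\gamma x^{-1})\le d(h,\gamma)+d(\gamma,\gamma x^{-1})\le D+d(1,x^{-1})\ll \CL_\Gamma(K|g|+K)+1 .
\]
Thus $R(g)\ll \CL_\Gamma(K|g|_\Gamma+K)+1$. Combining the two steps, and converting back from $d$ to $|\cdot|_\Gamma$, yields the stated bound with $K$ depending only on $M$, $\Gamma$ and the chosen metrics.
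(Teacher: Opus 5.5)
Your proposal is correct and follows essentially the same route as the paper: a Schreier-graph bound reduces bounded-index cosets to the covering radius of $C_\Gamma(g)$ in $C_{\mathsf G_{\mathbb R}}(g)$, and that radius is bounded by conjugating $g$ by a lattice point near a real centralizing element; the paper phrases this second step contrapositively, taking a point that nearly realizes the covering radius and showing the shortest conjugator must be long. One small fix: the restriction of $d$ to $C_{\mathsf G_{\mathbb R}}(g)$ need not be geodesic, but you only need connectedness of $C_{\mathsf G_{\mathbb R}}(g)$. Indeed, if $U$ is an open ball with $C_\Gamma(g)U=C_{\mathsf G_{\mathbb R}}(g)$, then $\{\gamma : \gamma U\cap U\neq\varnothing\}$ already generates $C_\Gamma(g)$.
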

\begin{proof}
	Let $\mathsf G_{\mathbb R}$ be the real Mal'cev completion of $\Gamma$ (see \S\ref{subsubsec:malcev-completion} for its definition). Fix a left-invariant Riemannian metric $d$ on $\mathsf G_{\mathbb R}$. Since $\Gamma$ is a cocompact lattice in $\mathsf G_{\mathbb R}$, they are quasi-isometric and hence there exists $A\geq 1$ such that
	\[
	\frac1A \, d(1,g)-A\leq |g|_\Gamma\leq A \, d(1,g)+A
	\qquad\text{for all } g\in\Gamma,
	\]
	and every point of $\mathsf G_{\mathbb R}$ lies within distance $A$ of $\Gamma$.
	
	Fix $g\in\Gamma$. By Mal'cev theory, $C_{\mathsf G_{\mathbb R}}(g)$ is a connected rational Lie subgroup of $\mathsf G_{\mathbb R}$, and $C_\Gamma(g)$ is a cocompact lattice in $C_{\mathsf G_{\mathbb R}}(g)$ (see \cite[Lect.\ 1, Prop.\ 3.5]{Gel14}). Let $\Delta\geq 1$ be the least integer such that every point of $C_{\mathsf G_{\mathbb R}}(g)$ lies within ambient $d$-distance $\Delta$ of $C_\Gamma(g)$.
	
	We first bound bounded-index cosets in $C_\Gamma(g)$. Let $H\leq  C_\Gamma(g)$ have index $\leq M$, and let $c_0H$ be a left coset; the proof is identical for right cosets. Since $C_\Gamma(g)$ is generated by elements in a $d$-ball of radius $\leq 3 \Delta$ (see e.g.\ \cite[Theorem I.8.10(1)]{BH99}) and the coset $c_0H$ has a representative $c$ which is a product of at most $M$ such elements, we have that $d(1,c)\ll M\Delta$. Passing to the word metric on $\Gamma$ gives $|c|_\Gamma\leq K_1\Delta$, where $K_1 \geq 1$ depends only on $\Gamma$ and $M$.
	
	It remains to bound $\Delta$ in terms of $\CL_\Gamma(|g|_\Gamma)$. Choose $y\in C_{\mathsf G_{\mathbb R}}(g)$ with $d(y,C_\Gamma(g))\geq \Delta-1$, and choose $x\in\Gamma$ with $d(x,y)\leq A$. Put $b=x^{-1}gx$. Since $x$ differs from the centralizing element $y$ by a bounded element of $\mathsf G_{\mathbb R}$, we have $|b|_\Gamma\ll_A |g|_\Gamma+1$. 
	Let $z \in \Gamma$ be a shortest conjugator from $g$ to $b$. Then $zx^{-1}\in C_\Gamma(g)$, so $z=\gamma x$ for some $\gamma \in C_\Gamma(g)$. Therefore
	\[
	d(1,z)=d(\gamma^{-1},x)\geq d(x,C_\Gamma(g))\geq d(y,C_\Gamma(g))-A\geq  \Delta-A-1 . 
	\]
	The quasi-isometry $\Gamma \hookrightarrow \mathsf G_{\mathbb R}$ then yields $\Delta\ll_A  |z|_\Gamma+1$. Since $|b|_\Gamma\ll_A |g|_\Gamma+1$, it follows that 
	\[
	\Delta\leq K_2\CL_\Gamma(K_2|g|_\Gamma+K_2)+K_2
	\]
	for some $K_2 \geq 1$ depending only on $\Gamma$ and the fixed metrics. 	 
	Combining with $|c|_\Gamma\leq K_1\Delta$ proves the lemma. 
\end{proof}

\begin{lemma}\label{lemma:correction}
	There is a constant $K \geq 1$ such that if $gt$ is conjugate to $g$ for some $g \in G$ and $t \in \tau(G)$, then there exists a conjugator $z \in G$ from $gt$ to $g$ with 
	\[
	|z|_G\leq K\CL_\Gamma(K|\pi(g)|_\Gamma+K)+K.
	\]
\end{lemma}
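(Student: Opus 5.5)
The plan is to reduce to a bounded-index coset problem in the centralizer $C_\Gamma(\pi(g))$ and then apply Lemma~\ref{lemma:centralizer-cosets-uniform} with a constant $M$ depending only on $|\tau(G)|$.

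Write $\bar g=\pi(g)$. Any conjugator from $gt$ to $g$ maps under $\pi$ into $C_\Gamma(\bar g)$, so the relevant set of conjugators lives inside the preimage $\pi^{-1}(C_\Gamma(\bar g))$. Consider the group $P:=\pi^{-1}(C_\Gamma(\bar g))\leq G$. For $p\in P$ we have $p^{-1}gp=g\,s(p)$ for a unique $s(p)\in\tau(G)$. The set of conjugators from $gt$ to $g$ is a coset of $C_G(g)$ (namely $w\,C_G(g)$ for any one conjugator $w$; note $w^{-1}(gt)w=g$ with $w\in P$). Next I show that $C_G(g)$ has index at most $|\tau(G)|$ in $P$. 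The map $p\mapsto p^{-1}gp$ sends $P$ into the finite set $g\tau(G)$, and its fibres are exactly the cosets $C_G(g)p$. Hence $[P:C_G(g)]\leq|\tau(G)|$.

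Now pass to $\Gamma$. Since $\ker\pi=\tau(G)\leq P$, we have $\pi(P)=C_\Gamma(\bar g)$. Put $H:=\pi(C_G(g))$. Then $H$ is a subgroup of $C_\Gamma(\bar g)$ of index at most $|\tau(G)|=:M$. The image $\pi(wC_G(g))=\pi(w)H$ is a coset of $H$ in $C_\Gamma(\bar g)$; it is a left coset, which the lemma also covers. Apply Lemma~\ref{lemma:centralizer-cosets-uniform} with this $M$. This yields $c\in\pi(w)H$ with $|c|_\Gamma\leq K\CL_\Gamma(K|\bar g|_\Gamma+K)+K$.

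Finally lift. Write $c=\pi(wh)$ with $h\in C_G(g)$; then $wh$ conjugates $gt$ to $g$. The elements of $G$ mapping to $c$ are $wh\tau$ for $\tau\in\tau(G)$, and these need not be conjugators. To fix this, I instead lift $c$ to some element $z'$ of minimal length, so $|z'|_G\leq L|c|_\Gamma+L$, and write $z'=wh\,t'$ with $t'\in\tau(G)$. Then $z:=z't'^{-1}=wh$ is a conjugator with $|z|_G\leq |z'|_G+\max_{s\in\tau(G)}|s|_G$.

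The main obstacle is the index bound $[P:C_G(g)]\leq|\tau(G)|$ together with the correct identification of the conjugator set as a coset. Once these are in place, the rest is bookkeeping with the constant $L$ and the finite torsion subgroup, which gives the claimed bound with an adjusted $K$.
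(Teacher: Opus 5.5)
Your proposal is correct and follows essentially the same route as the paper: you identify the conjugators as a coset of $C_G(g)$, project it to a coset of $H=\pi(C_G(g))$, which has index at most $\#\tau(G)$ in $C_\Gamma(\bar g)$, apply Lemma~\ref{lemma:centralizer-cosets-uniform}, and lift back to $G$. You also prove the index bound, which the paper only asserts, and your lifting step could be shortened, since the estimate $|wh|_G\leq L|\pi(wh)|_\Gamma+L$ already applies directly to the conjugator $wh$.
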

\begin{proof}
	Write $\bar g=\pi(g)$. Then $H:=\pi(C_G(g))\leq  C_\Gamma(\bar g)$ satisfies 
	$[C_\Gamma(\bar g):H]\leq \# \tau(G)$. 
	
	Choose $x\in G$ with $x^{-1}(gt)x=g$. The projections of all conjugators
	from $gt$ to $g$ form the coset $\pi(x)H$ in $C_\Gamma(\bar g)$. Applying
	Lemma~\ref{lemma:centralizer-cosets-uniform} with $M=\# \tau(G)$, we deduce that this coset contains
	an element $\zeta$ such that 
	$
	|\zeta|_\Gamma\leq K\CL_\Gamma(K|\bar g|_\Gamma+K)+K.
	$ 
	Since $\tau(G)$ is finite, any conjugator $z$ from $gt$ to $g$ with $\pi(z) = \zeta$ satisfies $|z|_G\ll |\zeta|_\Gamma + 1$. Increasing $K\geq 1$ gives the
	claimed bound.
\end{proof}

\begin{lemma}\label{lemma:torsion-upperbound}
	\(\CL_G \preceq \CL_{\Gamma}\).
\end{lemma}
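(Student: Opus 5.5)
Let $a,b\in G$ be conjugate with $|a|_G,|b|_G\le n$. The plan is a two-step argument: first conjugate $a$ into $b\,\tau(G)$ using a short conjugator coming from $\Gamma$, then absorb the torsion error with Lemma~\ref{lemma:correction}.

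\emph{Step 1: lift a short conjugator from $\Gamma$.} Put $\bar a=\pi(a)$ and $\bar b=\pi(b)$. These are conjugate in $\Gamma$, and $|\bar a|_\Gamma,|\bar b|_\Gamma\le Ln$. Choose a shortest conjugator $\bar x\in\Gamma$ from $\bar a$ to $\bar b$, so that
\[
|\bar x|_\Gamma\le \CL_\Gamma(Ln).
\]
Lift $\bar x$ to some $x\in G$ with $\pi(x)=\bar x$. By the bilipschitz estimate between $|\cdot|_G$ and $|\cdot|_\Gamma$,
\[
|x|_G\le L\,\CL_\Gamma(Ln)+L.
\]
Then $\pi(x^{-1}ax)=\bar b$, so $x^{-1}ax=bt$ for some $t\in\tau(G)$.

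\emph{Step 2: correct the torsion error.} The elements $bt$ and $a$ are conjugate, and $a$ is conjugate to $b$, so $bt$ is conjugate to $b$. Apply Lemma~\ref{lemma:correction} with $g=b$. This gives a conjugator $z$ from $bt$ to $b$ with
\[
|z|_G\le K\,\CL_\Gamma(K|\bar b|_\Gamma+K)+K\le K\,\CL_\Gamma(KLn+K)+K.
\]
Then $xz$ conjugates $a$ to $b$, and
\[
|xz|_G\le L\,\CL_\Gamma(Ln)+L+K\,\CL_\Gamma(KLn+K)+K.
\]

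\emph{Conclusion.} $\CL_\Gamma$ is non-decreasing, because it is defined as a maximum over a growing set. Hence the bound from Step 2 is at most $C\,\CL_\Gamma(Cn+C)+C$ for a suitable constant $C$ depending only on $L$ and $K$. This gives $\CL_G\preceq\CL_\Gamma$.

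The only substantive input is the torsion correction in Step 2. Its difficulty, namely bounding a coset representative in the centralizer, has already been handled by Lemmas~\ref{lemma:centralizer-cosets-uniform} and~\ref{lemma:correction}. What remains is routine bookkeeping of the constants.
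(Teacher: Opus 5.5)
Your proof is correct and follows the paper's argument essentially step for step: you lift a shortest $\Gamma$-conjugator to $x\in G$ so that $x^{-1}ax=bt$ with $t\in\tau(G)$, then apply Lemma~\ref{lemma:correction} to get a short conjugator $z$ from $bt$ to $b$, and take $xz$. Your explicit tracking of the constants, using that $\CL_\Gamma$ is non-decreasing, is simply a more detailed version of the paper's $\ll$ estimates.
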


\begin{proof}
	Let $a,b\in G$ be conjugate with $|a|_G,|b|_G\leq n$. Then
	$|\pi(a)|_\Gamma,|\pi(b)|_\Gamma\ll n$. Choose
	$\gamma\in\Gamma$ with $\gamma^{-1}\pi(a)\gamma=\pi(b)$ and
	$|\gamma|_\Gamma\leq \CL_\Gamma(Cn+C)$. Choose $x\in G$ with $\pi(x) = \gamma$. Then $|x|_G\ll \CL_\Gamma(Cn+C)+1$ and $x^{-1}ax=bt$ for some
	$t\in \tau(G)$. Since $a$ is conjugate to $b$, the element $bt$ is conjugate to $b$.
	By Lemma~\ref{lemma:correction} above, there is $z\in G$ such that $z^{-1}(bt)z=b$ and
	$|z|_G\ll \CL_\Gamma(Cn+C)+1$. 
	Therefore $(xz)^{-1}a(xz)=b$, and
	$|xz|_G\ll \CL_\Gamma(Cn+C)+1$. This proves $\CL_G(n)\preceq \CL_\Gamma(n)$.
\end{proof}

\medskip 

\noindent \textit{$N$-saturated subsets and finite-index invariance.}  
Let $\Gamma$ be a finitely generated torsion-free nilpotent group equipped
with a word length $|\cdot|_\Gamma$. To prove finite-index invariance of $\CL$, we work slightly more generally by considering $N$\emph{-saturated subsets}, defined below. This will be useful when reducing to Lie rings in \S\ref{sec:reductiontoLierings}, where we prove a similar finite-index invariance statement for Lie subrings. 

 Let $N\triangleleft\Gamma$ be a
finite-index normal subgroup. We call a subset $S\subset\Gamma$ \emph{$N$-saturated} if it is an
inverse-closed union of $N$-cosets containing the identity coset $N$. Equivalently, 
\[
N\subset S=S^{-1},
\qquad
SN=S.
\]
Note that $S$ need not itself be a
subgroup. 

For $a,b\in S$, say that they are \emph{$S$-conjugate} if
$x^{-1}ax=b$ for some $x\in S$, and define
\[
\CL_S(a,b)
:=
\min\bigl\{|x|_\Gamma \: \big| \: x\in S,\ x^{-1}ax=b\bigr\}.
\]
We then define $\CL_S(n)$ by taking the maximum over all $S$-conjugate
$a,b\in S$ with $|a|_\Gamma,|b|_\Gamma\leq n$. Notice that
$S$-conjugacy need not be transitive.

\begin{lemma}\label{lemma:finite-model}
	Let $S\subset\Gamma$
	be an $N$-saturated subset. Then
	$
	\CL_S\simeq\CL_\Gamma. 
	$
\end{lemma}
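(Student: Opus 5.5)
We prove the two inequalities $\CL_S\preceq\CL_\Gamma$ and $\CL_\Gamma\preceq\CL_S$ separately. In both directions the key tool is Lemma~\ref{lemma:centralizer-cosets-uniform}, applied with $M=[\Gamma:N]$. Throughout we use that $N$ has finite index, so $\Gamma$ is a finite union of cosets $r_1N,\dots,r_kN$ with fixed representatives $r_i$.

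\emph{Upper bound $\CL_S\preceq\CL_\Gamma$.} Let $a,b\in S$ be $S$-conjugate with $|a|_\Gamma,|b|_\Gamma\leq n$, and fix some $x_0\in S$ with $x_0^{-1}ax_0=b$. Choose a conjugator $x\in\Gamma$ with $|x|_\Gamma\leq\CL_\Gamma(n)$. The set of all conjugators from $a$ to $b$ in $\Gamma$ is the coset $C_\Gamma(a)x_0$. We want a short element of this coset that lies in $S$. Since $SN=S$, it suffices to find a short element of $(C_\Gamma(a)\cap N)x_0$, because $(C_\Gamma(a)\cap N)x_0\subset Nx_0=x_0N\subset S$ by normality of $N$.

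The subgroup $H:=C_\Gamma(a)\cap N$ has index at most $[\Gamma:N]$ in $C_\Gamma(a)$. Write $x_0=cx$ with $c=x_0x^{-1}\in C_\Gamma(a)$. Then $Hx_0=(Hc)x$. By Lemma~\ref{lemma:centralizer-cosets-uniform} (applied to the right coset $Hc$), there is $c'\in Hc$ with $|c'|_\Gamma\leq K\CL_\Gamma(Kn+K)+K$. Hence $c'x\in Hx_0\subset S$ is a conjugator from $a$ to $b$, and $|c'x|_\Gamma\ll\CL_\Gamma(Kn+K)+1$.

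\emph{Lower bound $\CL_\Gamma\preceq\CL_S$.} Let $a,b\in\Gamma$ be conjugate with $|a|_\Gamma,|b|_\Gamma\leq n$. These elements need not lie in $S$, so we first pass to powers. Set $m=[\Gamma:N]$, so that $g^{m}\in N$ for every $g\in\Gamma$; more precisely, $g^{e}\in N$ where $e$ is the exponent of the finite group $\Gamma/N$, and we take $m=e$. Then $a^m,b^m\in N\subset S$ and $|a^m|_\Gamma,|b^m|_\Gamma\leq mn$. Since $N\subset S$ and $NS\subset S$ (using $S=S^{-1}$ and $SN=S$), a conjugator $x\in\Gamma$ from $a$ to $b$ can be rewritten as $x=nr_i$ with $n\in N$; but we need $x\in S$, which need not hold.

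To handle this, instead pick $x_1\in\Gamma$ conjugating $a$ to $b$, and consider $a':=a^m$, $b':=b^m$. These are $\Gamma$-conjugate by $x_1$. To obtain an $S$-conjugacy we conjugate within $N$: choose the coset representative $r$ with $x_1\in rN$, and replace $b'$ by $b'':=rb'r^{-1}$. Then $a'$ and $b''$ are conjugate by $x_1r^{-1}\in N\subset S$ (using normality of $N$), and $|b''|\ll n$. Applying the definition of $\CL_S$ gives $y\in S$ with $|y|_\Gamma\leq\CL_S(Cn+C)$ conjugating $a^m$ to $b^m$ (after composing with the bounded $r$).

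\emph{Main obstacle.} The element $y$ conjugates $a^m$ to $b^m$, not $a$ to $b$. The key input is that in a torsion-free nilpotent group roots are unique. Since $(y^{-1}ay)^m=b^m$, uniqueness of roots gives $y^{-1}ay=b$. Thus $\CL_\Gamma(a,b)\leq\CL_S(Cn+C)+C$.

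I expect the bookkeeping in this second direction to be the delicate part, in particular tracking the coset representative $r$ and checking that the constant is uniform. The conceptual key is unique roots, which makes the passage through $m$-th powers lossless.
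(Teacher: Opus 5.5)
Your proposal is correct and follows essentially the same route as the paper. For $\CL_S\preceq\CL_\Gamma$ you correct a shortest $\Gamma$-conjugator $x$ by a short element of the bounded-index coset $(C_\Gamma(a)\cap N)\,x_0x^{-1}$ using Lemma~\ref{lemma:centralizer-cosets-uniform}; for $\CL_\Gamma\preceq\CL_S$ you split off a coset representative $r$, pass to $e$-th powers lying in $N$, and conclude by uniqueness of roots. Both match the paper, and Lemma~\ref{lemma:centralizer-cosets-uniform} is in fact only needed in the first direction.
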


\begin{proof}
	We first prove $\CL_\Gamma\preceq\CL_S$. Choose a finite set $R\subset\Gamma$
	of right coset representatives of $\Gamma / N$ and let $e$ be the exponent of $\Gamma / N$. 
	
	Let $a,b\in\Gamma$ be conjugate, with
	$|a|_\Gamma,|b|_\Gamma\leq n$. Choose a conjugator $x\in\Gamma$ and
	write $x=hr$, where $h\in N$ and $r\in R$. Put
	$d=rbr^{-1}$. Then
	$
	h^{-1}ah=d.
	$ 
	Since $a^e,d^e\in N\subset S$, the elements $a^e$ and $d^e$ are
	$S$-conjugate by $h$. Moreover,
	$|a^e|_\Gamma,|d^e|_\Gamma\ll n +1$. Hence there exists $y\in S$ such that
	$
	y^{-1}a^ey=d^e$ and $|y|_\Gamma \leq \CL_S(Cn+C).$  
	The torsion-free nilpotent group $\Gamma$ has unique roots, so
	$y^{-1}ay=d$. Consequently, $yr \in \Gamma$ conjugates $a$ to $b$, and therefore $\CL_\Gamma(n) \preceq \CL_S(n)$.

	Conversely, let $a,b\in S$ be $S$-conjugate, with
	$|a|_\Gamma,|b|_\Gamma\leq n$. Choose a shortest conjugator
	$x\in\Gamma$ from $a$ to $b$, so that
	$|x|_\Gamma\leq\CL_\Gamma(n)$, and choose any $S$-conjugator
	$x_0\in S$ from $a$ to $b$. Then
	$ 
	c_0:=x_0x^{-1}\in C_\Gamma(a).
	$ 
 Since
	$
	[C_\Gamma(a):C_\Gamma(a)\cap N]\leq[\Gamma:N],
	$ 
	Lemma~\ref{lemma:centralizer-cosets-uniform} gives 
	$c\in (C_\Gamma(a)\cap N)c_0$ and $K \geq 1$ such that
	\[
	|c|_\Gamma
	\leq K\CL_\Gamma(K|a|_\Gamma+K)+K.
	\]
	Write $c=hc_0$ with $h\in C_\Gamma(a)\cap N$. Then
	$
	cx=hx_0\in NS=S.
	$ 
	Moreover, $c$ centralizes $a$, so $cx$ is still a conjugator from
	$a$ to $b$. It follows that
	\[
	\CL_S(a,b)
	\leq |cx|_\Gamma
	\leq K\CL_\Gamma(Kn+K)+K +\CL_\Gamma(n).
	\]
	Thus $\CL_S(n)\preceq\CL_\Gamma(n)$.
\end{proof}

\begin{proposition}\label{prop:finiteindexinvariance}
	Let $\Gamma$ be a finitely generated torsion-free nilpotent group and
	let $H\leq\Gamma$ have finite index. Then
	$
	\CL_H\simeq\CL_\Gamma.
	$ 
\end{proposition}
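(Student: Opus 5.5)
The plan is to reduce to Lemma~\ref{lemma:finite-model} by choosing a finite-index normal subgroup $N$ of $\Gamma$ contained in $H$, and to apply that lemma twice: once with ambient group $\Gamma$ and $S=H$, and once with ambient group $H$ and $S=H$ itself (which is trivially $N$-saturated in $H$ and gives nothing), so the real content is the first application together with a comparison of word metrics.

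\emph{Step 1 (normal core).} Replace $N$ by the normal core $N:=\bigcap_{g\in\Gamma} gHg^{-1}$. Since $H$ has finite index, it has only finitely many conjugates, so $N\triangleleft\Gamma$ has finite index and $N\subset H$. Then $H$ is a subgroup, so $H=H^{-1}$, $HN=H$ and $N\subset H$. Thus $H$ is an $N$-saturated subset of $\Gamma$. Lemma~\ref{lemma:finite-model} then gives $\CL_H^{(\Gamma)}\simeq\CL_\Gamma$, where $\CL_H^{(\Gamma)}$ denotes the $S$-conjugator length function for $S=H$, measured in the word metric $|\cdot|_\Gamma$.

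\emph{Step 2 (change of metric).} Since $H$ is a finite-index subgroup of a finitely generated group, the inclusion $H\hookrightarrow\Gamma$ is a quasi-isometry. So there is $L\geq1$ with $\frac1L|h|_\Gamma\leq|h|_H\leq L|h|_\Gamma+L$ for all $h\in H$; this is the Švarc--Milnor lemma, or follows from Schreier generators. For $a,b\in H$, $H$-conjugacy is exactly conjugacy in the group $H$, and the set of conjugators in $H$ is the same set of elements whichever metric is used. Hence minimal lengths agree up to the factor $L$, and the length constraints $|a|,|b|\leq n$ transform to $\leq Ln+L$. Therefore $\CL_H\simeq\CL_H^{(\Gamma)}$ in the sense of $\simeq$ (affine rescaling of $n$ and of values is absorbed by the definition of $\preceq$). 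Combining with Step~1 gives $\CL_H\simeq\CL_\Gamma$.

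\emph{Main obstacle.} There is little difficulty once Lemma~\ref{lemma:finite-model} is available. The only points needing care are the verification that the normal core has finite index and lies in $H$, and the bookkeeping in Step~2 confirming that $\preceq$ tolerates the bilipschitz change of both the input size and the output length. The genuinely hard input, controlling centralizer cosets via Lemma~\ref{lemma:centralizer-cosets-uniform}, has already been absorbed into Lemma~\ref{lemma:finite-model}.
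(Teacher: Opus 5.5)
Your proposal is correct and is essentially the paper's proof: take the normal core $N$ of $H$, observe that $H$ is $N$-saturated, apply Lemma~\ref{lemma:finite-model} to get that the conjugator length of $H$ measured in $|\cdot|_\Gamma$ is equivalent to $\CL_\Gamma$, and then use that the restriction of $|\cdot|_\Gamma$ to $H$ is bilipschitz equivalent to a word length on $H$. The aside about a second application of the lemma with ambient group $H$ is superfluous, as you yourself note, and can be dropped.
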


\begin{proof}
	Take the normal core $
	N
	:=\bigcap_{\gamma\in\Gamma}\gamma H\gamma^{-1}
	$  of $H$ in $\Gamma$.
	Then $N\triangleleft\Gamma$ has finite index, and $H$ is an
	$N$-saturated subset. Lemma~\ref{lemma:finite-model} therefore shows that
	the conjugator length function of $H$, measured using
	$|\cdot|_\Gamma$, is equivalent to $\CL_\Gamma$. Since the restriction
	of $|\cdot|_\Gamma$ to $H$ is bilipschitz equivalent to any word length
	on $H$, the result follows.
\end{proof}

\subsection{Conjugator length in nilpotent Lie rings and Lie algebras}\label{sec:reductiontoLierings}

\noindent Theorem~\ref{theorembody:commensurabilityinvariance} shows that, for nilpotent groups, conjugator length depends only on the associated \emph{rational Mal'cev Lie algebra} $\mathfrak g_{\mathbb Q}$. Indeed, it is a classical result by Mal'cev that two finitely generated torsion-free nilpotent groups are commensurable if and only if the rational Mal'cev Lie algebras are $\mathbb Q$-isomorphic (\cite{Mal49,Mal51}, \cite[\S6A--C]{Seg83}, \cite[\S2]{Dek18}). 

Therefore, it is well-defined to speak of the conjugator length function of a given rational nilpotent Lie algebra $\mathfrak g_{\mathbb Q}$ up to $\simeq$. We now pass to Lie rings inside this Lie algebra, which will simplify the computations in the remainder of this paper. 

\subsubsection{The Mal'cev completion and its Lie algebra.}\label{subsubsec:malcev-completion} Let $\Gamma$ be a finitely generated torsion-free nilpotent group. We write $G_{\mathbb Q}$ for its \emph{rational Mal'cev completion}; this is the unique torsion-free divisible nilpotent group containing $\Gamma$ and such that every element in $G_{\mathbb Q}$ has a power lying in $\Gamma$. Let $\mathfrak g_{\mathbb Q}$ be the associated rational Mal'cev Lie algebra. We refer to \cite[\S2]{Dek18} for an elementary construction of $G_{\mathbb Q}$ and $\mathfrak g_{\mathbb Q}$. 

Let $\mathsf G_{\mathbb R}$ be the simply connected Lie group integrating $\mathfrak g_{\mathbb R}:=\mathfrak g_{\mathbb Q} \otimes_{\mathbb Q} \mathbb R $. Then we may view $\Gamma \subset G_{\mathbb Q} \subset \mathsf G_{\mathbb R}$. Another classical result by Mal'cev states that $\mathsf G_{\mathbb R}$, the \emph{real} Mal'cev completion, is the unique simply connected nilpotent Lie group containing $\Gamma$ as a cocompact lattice  (\cite[\S2]{Rag72}, \cite[\S2]{Dek18}). 

 The exponential map identifies $\mathfrak g_{\mathbb Q}$ (resp.\ $\mathfrak g_{\mathbb R}$) with $G_{\mathbb Q}$ (resp.\ $\mathsf G_{\mathbb R}$) via the \emph{Baker--Campbell--Hausdorff formula} (BCH), denoted by
 \[
X \ast Y := \log{\big(\exp(X)\cdot \exp(Y)\big)} = X + Y + \frac12 [X,Y] + \sum_{ k = 3}^c q_k(X,Y). 
\]
Here, the $q_k(X,Y)$ are rational linear combinations of right-normed $k$-fold Lie brackets in $X$ and $Y$. These $q_k(X,Y)$ are given by a universal formula and do not depend on \(\mathfrak g_{\mathbb Q}\); see \cite[\S6]{Seg83}. 

Under this identification, conjugation\footnote{The sign convention is obviously immaterial for computing conjugator length; in Lie rings it is more elegant to use the convention $e^{\ad X}(Y)$ instead of $e^{\ad (-X)}(Y)$.}  $xyx^{-1}$ corresponds to $
X*Y*(-X)=e^{\ad X}(Y),
$ 
where $\ad X(Y)=[X,Y]$ and \[e^{\ad X}(Y)=Y+[X,Y]+\frac1{2!}[X,[X,Y]]+\ldots+ \frac{1}{(c-1)!} [\underbrace{X,[\cdots[X, [X}_{\text{$c-1$ copies of $X$}},Y]] \cdots]].\] 

\subsubsection{Lie rings, Guivarc'h lengths, and conjugator length.} 
 A \emph{full Lie ring} in $\mathfrak g_{\mathbb Q}$ is a full $\mathbb Z$-lattice $L\subset \mathfrak g_{\mathbb Q}$ such that $[L,L]\subset L$. We say that $L$ is \emph{BCH-closed} if $x*y\in L$ for all $x,y\in L$. In that case, $\Gamma = \exp(L)$ is a finitely generated torsion-free nilpotent group, specifically an \emph{LR-group}, with rational Mal'cev Lie algebra $\mathfrak g_{\mathbb Q}$. 

Every Lie ring $L\subset \mathfrak g_{\mathbb Q}$ contains a finite-index subring of the form $DL$ which is BCH-closed by taking $D \geq 1$ sufficiently large (in order to clear the denominators in the BCH formula). Thus any full Lie ring in $\mathfrak g_{\mathbb Q}$ gives, up to commensurability, a group with rational Mal'cev Lie algebra $\mathfrak g_{\mathbb Q}$.

\medskip 

Following \cite{Gui73}, we put a Guivarc'h length $\|\cdot \|_{\mathrm{Gv}}$ on $L$ and $\mathfrak g_{\mathbb Q}$ as follows. Let
$\gamma_1(\mathfrak g_{\mathbb Q})=\mathfrak g_{\mathbb Q}$,
$\gamma_{i+1}(\mathfrak g_{\mathbb Q})=[\mathfrak g_{\mathbb Q},\gamma_i(\mathfrak g_{\mathbb Q})]$
be the lower central series. Choose subspaces $V_i$ such that
$\gamma_i(\mathfrak g_{\mathbb Q})=V_i\oplus \gamma_{i+1}(\mathfrak g_{\mathbb Q})$, and choose vector space norms $\|\cdot \|_i$ on $V_i\otimes_{\mathbb Q}\mathbb R$ (or, more generally, quasi-norms bilipschitz equivalent to such vector space norms) and restrict them to $V_i$. Define 
\[
\|X\|_{\mathrm {Gv}}=\max_i \|v_i\|_i^{1/i}, \qquad X = v_1+\ldots +v_c, \quad v_i \in V_i.
\]
By \cite{Gui73}, the resulting Guivarc'h length $\|\cdot \|_{\mathrm{Gv}}$ on $L$ is independent of these choices up to bilipschitz equivalence. 

\medskip 

We now define conjugator length directly on a full Lie ring.  Let $L \subset \mathfrak g_{
\mathbb Q}$ be a full Lie ring. 
We call $u,v \in L$ \emph{conjugate} in $L$ if there exists $x \in L$ such that $e^{\ad x}(u) = v$.

	For conjugate $u,v\in L$, define the \emph{conjugator length}
	$\CL_L(u,v)\geq 0$ as the Guivarc'h length of a shortest conjugator from
	$u$ to $v$. 
	The \emph{conjugator length function} of $L$ is
	\[
	\CL_L(n):=\max\bigl\{\,\CL_L(u,v) \, \big| \, u,v\in L \text{ conjugate},\
	\|u\|_{\mathrm {Gv}},\|v\|_{\mathrm {Gv}}\leq n\,\bigr\}.
	\]
	The $\simeq$-class of $\CL_L$ does not depend on the chosen Guivarc'h norm.

\medskip 

One caveat of this generalization is that $e^{\ad x}(u)$ need not lie in $L$
for arbitrary $u,x\in L$ if $L$ is not BCH-closed. Thus, if $L$ is not
BCH-closed, conjugacy in $L$ need not be transitive, although it is reflexive
and symmetric.

\begin{lemma}\label{lemma:BCH-liering-group-CL}
	Assume $L$ is BCH-closed and let $\Gamma := \exp(L)$. Then $\CL_{\Gamma} \simeq \CL_L$.
\end{lemma}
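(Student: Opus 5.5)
The plan is to identify $\Gamma=\exp(L)$ with $L$ as sets via the exponential map and compare the two notions of conjugacy and the two lengths. In the group, $x^{-1}ux=v$ for $x,u,v\in\Gamma$ means, with $X=\log x$, $U=\log u$, $V=\log v$ in $L$, that $e^{\ad(-X)}(U)=V$. Since $L$ is BCH-closed, $-X\in L$ and $X\in L$ are interchangeable: $\exp(-X)=x^{-1}\in\Gamma$. So $u,v$ are conjugate in $\Gamma$ exactly when $U,V$ are conjugate in $L$. The conjugators correspond bijectively via $x\mapsto -\log x$, and $\|-X\|_{\mathrm{Gv}}\asymp\|X\|_{\mathrm{Gv}}$. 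The sign convention in the footnote is immaterial precisely for this reason.

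It then remains to compare lengths. I would invoke Guivarc'h's theorem \cite{Gui73}: for a finitely generated torsion-free nilpotent group $\Gamma$ with a word length $|\cdot|_\Gamma$, there is $C\ge1$ with
\[
\tfrac1C\|\log g\|_{\mathrm{Gv}}-C\le|g|_\Gamma\le C\|\log g\|_{\mathrm{Gv}}+C
\]
for all $g\in\Gamma$. This is the standard statement that the homogeneous quasi-norm on $\mathsf G_{\mathbb R}$ is comparable to the Riemannian or word metric at large scale. It is applied here to the lattice $\Gamma\subset\mathsf G_{\mathbb R}$, which is cocompact and quasi-isometric to $\mathsf G_{\mathbb R}$ by the orbit map. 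The only point needing care is that $\|\cdot\|_{\mathrm{Gv}}$ as defined (built from a splitting $V_i$ of the lower central series of $\mathfrak g_{\mathbb Q}$) agrees, up to bilipschitz equivalence, with the homogeneous norm used in \cite{Gui73}. This holds by the independence of choices stated above.

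With this comparison in hand, both inequalities are formal. Take $u,v\in\Gamma$ conjugate with $|u|_\Gamma,|v|_\Gamma\le n$. Then $\|U\|_{\mathrm{Gv}},\|V\|_{\mathrm{Gv}}\le Cn+C^2$, so some $L$-conjugator has Guivarc'h length at most $\CL_L(Cn+C^2)$. The corresponding group element then has word length at most $C\CL_L(Cn+C^2)+C$, which gives $\CL_\Gamma\preceq\CL_L$. The reverse inequality is symmetric. The main obstacle is only justifying the Guivarc'h comparison in this precise form; everything else is bookkeeping of constants and the harmless sign.
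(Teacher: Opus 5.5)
Your proposal is correct and follows essentially the same route as the paper: identify $\Gamma$ with $L$ via $\exp$, note that conjugacy in $\Gamma$ is the relation $e^{\ad x}(u)=v$ in $L$ up to the harmless sign $x\mapsto -x$, and use Guivarc'h's comparison \cite{Gui73} between word length on $\Gamma$ and Guivarc'h length on $L$ to transfer the bounds. You make explicit the sign convention and the bookkeeping of constants that the paper compresses into that citation; one small point is that $-X\in L$ holds simply because $L$ is an additive lattice, not because $L$ is BCH-closed.
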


\begin{proof}
	It is proved in \cite{Gui73} that under exponential coordinates, every word length on $\Gamma$ is bilipschitz equivalent to every Guivarc'h length on $L$, and conjugacy in $\Gamma$ is exactly the relation $e^{\ad x}(u) = v$ in $L$. Hence $\CL_\Gamma(\exp u, \exp v) \asymp \CL_L(u,v)$ and  $\CL_{\Gamma} \simeq \CL_L$.
\end{proof}

The next proposition shows that BCH-closedness is only a convenience:
every full Lie ring in $\mathfrak g_{\mathbb Q}$ gives the same
conjugator length function.

\begin{proposition}\label{prop:full-liering-CL}
	Let $L\subset\mathfrak g_{\mathbb Q}$ be a full Lie ring. Then
	$
	\CL_L\simeq\CL_{\mathfrak g_{\mathbb Q}}.
	$ 
\end{proposition}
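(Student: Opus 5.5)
We read $\CL_{\mathfrak g_{\mathbb Q}}$ as the well-defined $\simeq$-class $\CL_\Gamma$ for any finitely generated torsion-free nilpotent $\Gamma$ with rational Mal'cev Lie algebra $\mathfrak g_{\mathbb Q}$; this is legitimate by Theorem~\ref{theorembody:commensurabilityinvariance}. The plan is to compare $L$ with a BCH-closed finite-index Lie subring, via a Lie-ring analogue of Lemma~\ref{lemma:finite-model}.

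Choose $D\geq 1$ such that $L' := DL$ is BCH-closed. Put $\Gamma := \exp(L')$. By Lemma~\ref{lemma:BCH-liering-group-CL}, $\CL_{L'}\simeq \CL_\Gamma$, so it suffices to show $\CL_L\simeq \CL_{L'}$.

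To do this, pick $D'$ such that $M:=D'L$ is BCH-closed and contained in $L'$. Then $N:=\exp(M)$ is a finite-index subgroup of $\Gamma$. We may shrink $N$ to its normal core, which is still of the form $\exp$ of a full lattice; alternatively, choose $M=D'L$ with $D'$ large enough that $\exp(M)$ is normal in $\Gamma'$, where $\Gamma':=\exp(L'')$ for some BCH-closed $L''\supseteq L$. Such an $L''$ exists: the BCH-closure of $L$ generates a finitely generated subgroup, since a finitely generated subgroup of $G_{\mathbb Q}$ has $\log$ contained in a full lattice. The cleanest setup is therefore to work inside $\Gamma'=\langle \exp L\rangle$, a finitely generated torsion-free nilpotent group, with $N=\exp(D'L)$ a normal subgroup of finite index (taking $D'$ divisible enough) and $N\subset \exp(L)\subset \Gamma'$.

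Now set $S:=\exp(L)\subset \Gamma'$. It is inverse-closed and contains $N$. The key claim is $SN=S$, i.e.\ $L* D'L\subset L$. This should hold for $D'$ divisible by the denominators of the BCH formula: every bracket term $q_k(X,D'Y)$ with $k\geq 2$ involves $Y$ at least once, so it lies in $D'\cdot(\text{denominators})^{-1}L\subset L$ because $[L,L]\subset L$.

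Given this, $S$ is $N$-saturated. Conjugacy in $L$, namely $e^{\ad x}u=v$ with $x,u,v\in L$, is exactly $S$-conjugacy in $\Gamma'$, and the Guivarc'h length on $L$ is bilipschitz to $|\cdot|_{\Gamma'}$ by \cite{Gui73}. Hence $\CL_L\simeq \CL_S$, and Lemma~\ref{lemma:finite-model} gives $\CL_S\simeq \CL_{\Gamma'}$. Since $\Gamma'$ has rational Mal'cev algebra $\mathfrak g_{\mathbb Q}$, this gives $\CL_L\simeq\CL_{\mathfrak g_{\mathbb Q}}$.

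The main obstacle is the bookkeeping in the previous step: choosing $D'$ so that $N=\exp(D'L)$ is simultaneously a subgroup, normal in $\Gamma'$, and satisfies $SN=S$. For normality, I would use that conjugation $e^{\ad g}$ by any $g\in\Gamma'$ preserves the lattice $D'L$ once $D'$ absorbs the factorials. This works because $\log\Gamma'$ lies in $\frac1{E}L$ for some fixed $E$, so the nested brackets of $e^{\ad g}$ applied to $D'L$ land in $D'L$ after enlarging $D'$ by a power of $E$ and by $(c-1)!$. If normality fails, I would fall back to the normal core, checking that a finite intersection of such lattice-exponentials still contains some $\exp(D''L)$.
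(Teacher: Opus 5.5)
There is a genuine error in the step you flag as the main obstacle. The subgroup $\exp(D'L)$ is in general \emph{not} normal in $\Gamma'=\langle\exp L\rangle$, for any choice of $D'$. Since $e^{\ad g}$ is linear, $e^{\ad g}(D'L)=D'\,e^{\ad g}(L)$. So normality of $\exp(D'L)$ does not depend on $D'$: it holds if and only if $e^{\ad g}(L)=L$ for all $g\in\log\Gamma'$. Enlarging $D'$ by powers of $E$ or by $(c-1)!$ rescales both sides equally and absorbs no denominators.

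Concretely, take the filiform algebra with $[X,Y_1]=Y_2$, $[X,Y_2]=Y_3$ and $L=\mathbb Z\langle X,Y_1,Y_2,Y_3\rangle$. Then $e^{\ad X}(D'Y_1)=D'(Y_1+Y_2+\tfrac12 Y_3)\notin D'L$. Hence $\exp(X)\in\Gamma'$ does not normalize $\exp(D'L)$.

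Your fallback does repair this, and more cheaply than you suggest. Let $N_0$ be the normal core of $N=\exp(D'L)$ in $\Gamma'$; it has finite index because $N$ does. Then $N_0\subseteq N\subseteq S$ and $S\subseteq SN_0\subseteq SN=S$, so $S$ is $N_0$-saturated. There is no need to find some $\exp(D''L)$ inside the core, and doing so would only reintroduce the same normality problem. Note also that your first option, the normal core inside $\exp(DL)$, is useless, because that group does not contain $S=\exp(L)$.

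A second, smaller gap is the existence of the ambient group. Your reason that $\langle\exp L\rangle$ is finitely generated is circular. The fact that finitely generated subgroups of $G_{\mathbb Q}$ have logarithm inside a full lattice only applies once finite generation is known, and $\langle\exp L\rangle$ is generated by the infinite set $\exp(L)$. Moreover, $\exp(L)$ is not contained in the group generated by exponentials of a $\mathbb Z$-basis. In the Heisenberg algebra, $\exp(X+Y)=\exp X\exp Y\exp(-\tfrac12 Z)\notin\langle \exp X,\exp Y,\exp Z\rangle$.

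What you actually need is a BCH-closed full lattice containing $L$. A homothety $\tfrac1E L$ does not work, since brackets pick up $E^{-2}$. The paper supplies exactly this lattice:
$A=\sum_i D^{-(2i-1)}\bigl(L\cap\gamma_i(\mathfrak g_{\mathbb Q})\bigr)$.
Its filtration-dependent weights make it termwise closed under the BCH and conjugation polynomials.

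That single construction resolves both of your issues at once. First, $\exp(A)$ is a finitely generated group containing $S$. Second, since $e^{\ad a}(A)=A$ for every $a\in A$, any rescaling $B=qA\subseteq L$ of $A$ is automatically normal; the paper rescales the conjugation-stable lattice $A$, not $L$. Finally, $SN=S$ then follows from the same ``every nonlinear term contains $b$'' argument you gave.
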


\begin{proof}
 Choose an integer $D\geq1$ divisible by all denominators occurring in the BCH formula and in the formula for $e^{\ad x}$ up to the class $c$. Put
	\[
	L_i:=L\cap\gamma_i(\mathfrak g_{\mathbb Q}),
	\qquad
	A:=\sum_{i=1}^c D^{-(2i-1)}L_i.
	\]
	Then $A$ is a full Lie ring containing $L$. Moreover, it is closed
	termwise under all Lie polynomials occurring in the BCH and conjugation
	formulas. In particular, $A$ is BCH-closed.  
	
	Choose $q\geq1$ such that $B:=qA\subset L$. Then $B$ is BCH-closed and
	\[
	e^{\ad a}(B)=B
	\qquad\text{for every }a\in A,
	\]
	that is, $B$ is both a Lie ideal and a normal subgroup of $A$ under BCH. 
	Furthermore, if $b\in B$ and $x\in L$, every non-linear term in
	$b*x-x$ and $x*b-x$ contains at least one occurrence of $b$ and
	therefore lies in $B$. Hence
	\[
	B*L=L*B=L.
	\]
	
	Now put $
	\Gamma:=\exp(A)$, $N:=\exp(B)$ and $S:=\exp(L)$.  
	Then $N\triangleleft\Gamma$ has finite index and, in the terminology of Lemma~\ref{lemma:finite-model},  $S \subset \Gamma$ is an $N$-saturated subset. Lemma~\ref{lemma:finite-model} gives $\CL_\Gamma \simeq\CL_S$. As in the proof of Lemma~\ref{lemma:BCH-liering-group-CL}, we conclude that 
	$
	\CL_L \simeq \CL_\Gamma$. 
	Finally, $\Gamma$ has rational Mal'cev Lie algebra
	$\mathfrak g_{\mathbb Q}$, so
	$\CL_\Gamma\simeq\CL_{\mathfrak g_{\mathbb Q}}$.
\end{proof}

\subsection{Universal $n^c$ lower bound} \label{subsec:univ-lower}
The virtual nilpotency class $c$ of a finitely generated nilpotent group $G$ is the minimal nilpotency class among all finite-index subgroups. Equivalently, it is the nilpotency class of the torsion-free quotient \(\Gamma := G / \tau(G)\), of its rational Mal'cev Lie algebra, and of its real Mal'cev Lie algebra. 
It is a famous theorem of Gersten--Holt--Riley that $c$-step nilpotent groups have a universal $n^{c+1}$ upper bound on their Dehn functions \cite{GHR03}. In this section, we prove a universal $n^c$ lower bound on conjugator length in (non-virtually Abelian) nilpotent groups.

\begin{proposition}\label{propbody:universalLowerBound} If $G$ is nilpotent of virtual nilpotency class $c \geq 2$, then $n^c \preceq \CL_G(n)$. 
\end{proposition}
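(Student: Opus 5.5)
By Theorem~\ref{theorembody:commensurabilityinvariance} and Proposition~\ref{prop:full-liering-CL}, it suffices to prove $n^c\preceq\CL_L(n)$ for a single convenient full Lie ring $L\subset\mathfrak g_{\mathbb Q}$, where $\mathfrak g_{\mathbb Q}$ is the rational Mal'cev Lie algebra of $G/\tau(G)$. This algebra has class exactly $c\geq 2$. We may therefore work entirely with the Guivarc'h length and the relation $e^{\ad x}(u)=v$.

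\textbf{Choice of elements.} Since $\gamma_c(\mathfrak g_{\mathbb Q})\neq 0$, there exist $a,b\in\mathfrak g_{\mathbb Q}$, which we may take in $L$ after scaling, with
\[
z:=[a,[a,\ldots,[a,b]\cdots]]\neq 0,
\]
where $a$ appears $c-1$ times. Indeed, $\gamma_c$ is spanned by left-normed brackets of generators, and a polarization/Vandermonde argument produces a nonzero bracket of this ``Engel'' form. I expect this to be the main obstacle, and I would handle it as follows. The map $x\mapsto \ad(x)^{c-1}$ is a homogeneous polynomial map from $\mathfrak g/\gamma_2$ to linear maps. Its full polarization is the symmetrized sum of $\ad x_1\cdots\ad x_{c-1}$. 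These products span the image of $\gamma_c$ only after symmetrization, so I would instead pick a left-normed bracket $[x_{1},[x_2,\ldots,[x_{c-1},y]]]\neq0$ and apply the Vandermonde argument to $a=\sum t_ix_i$. The difficulty is that the polarization of $\ad(a)^{c-1}y$ only yields the symmetrization. If this fails (it does in some algebras), I would avoid the Engel element altogether and use a general nonzero bracket $[x_1,[x_2,\ldots,[x_{c-1},y]]]$ directly in the construction below.

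\textbf{Construction.} Set $u:=b$, or more generally $u:=y$, and take the conjugator $x=n\,a$ (respectively a suitable product of the $x_i$). Then
\[
v:=e^{\ad(na)}(u)=u+n[a,u]+\cdots+\tfrac{n^{c-1}}{(c-1)!}z.
\]
Its $\gamma_c$-component has size about $n^{c-1}$, so $\|v\|_{\mathrm{Gv}}\ll n^{(c-1)/c}$, and likewise the degree-$i$ terms contribute at most $(n^{i-1})^{1/i}$. This alone is too weak, since it only says the conjugator $na$ of length $n$ connects elements of length $n^{1-1/c}$, giving $\CL(m)\succeq m^{c/(c-1)}$. To obtain $n^c$ I would use a central conjugator instead. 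Take $u$ of length $n$ with $[w,u]=z$ nonzero central, where $w\in\gamma_{c-1}$ and $u=n b$. Then $v=u+k[w,u]$ with $k$ chosen so that $\|v\|\ll n$; this requires $k\,n\lesssim n^c$, that is, $k\approx n^{c-1}$.

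\textbf{Lower bound on conjugators.} Every conjugator $x'$ from $u$ to $v$ must satisfy $e^{\ad x'}(u)=v$. Projecting modulo $\gamma_{c}$ and working inductively down the central series, one shows that the components of $x'$ in degrees below $c-1$ lie in the centralizer directions, while the degree-$(c-1)$ component must produce $kz$. The worst case is forced by choosing $b$ so that $\ad b$ is injective on a complement of the centralizer, e.g.\ a generic $b$. The Guivarc'h length of $x'$ is then at least $(k)^{1/(c-1)}\approx n$, which is too small. So instead I would take $u=b$ of bounded length and $v=b+n^c z$, which has length about $n$. A conjugator must have a degree-$(c-1)$ component $w'$ with $[w',b]=n^cz$ modulo corrections, forcing $\|w'\|\gg n^c$ and hence Guivarc'h length at least $n^{c/(c-1)}$. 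To get $n^c$ one needs the conjugator to be forced in degree $1$. I would therefore choose $b\in\gamma_{c-1}$ and arrange $z=[a,b]$ with $a$ in degree $1$, and take $u=b$ with $\|u\|\approx 1$ and $v=b+n^c z$ with $\|v\|\approx n$. Then $e^{\ad x'}(b)=b+[x'_1,b]$, so $[x'_1,b]=n^c z$. Since $\ad b$ restricted to degree $1$ modulo its kernel is an injective rational linear map, this gives $\|x'_1\|\gg n^c$, and hence $\CL\succeq n^c$.
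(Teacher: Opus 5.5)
Your final construction is exactly the paper's proof after the same reduction to a full Lie ring: $u=b\in\gamma_{c-1}$ of bounded length, $v=b+n^cz$ with $z=[a,b]$ central, the conjugator $n^ca$, and the forced equation $[x'_1,b]=n^cz$. The earlier attempts (Engel elements, conjugators $na$, central conjugators) can simply be deleted. One small correction: $\|x'_1\|\gg n^c$ holds because the fixed linear map $\bar x\mapsto[x,b]$ from $\mathfrak g_{\mathbb Q}/\gamma_2(\mathfrak g_{\mathbb Q})$ to $\gamma_c(\mathfrak g_{\mathbb Q})$ is bounded (Lipschitz), not because $\ad b$ is injective modulo its kernel, which would give an inequality in the wrong direction.
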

The proof uses ideas similar to those used in the model filiform lower-bound argument in \cite{BR26Filiform}. 
\begin{proof} 
It suffices to prove \(n^c\preceq\CL_L(n)\) for a full Lie ring \(L\) in a rational nilpotent Lie algebra~\(\mathfrak g_{\mathbb Q}\) of class \(c\geq2\).
	
	Equip $L \subset \mathfrak g_{\mathbb Q}$ with a Guivarc'h length $\|\cdot \|_{\mathrm{Gv}}$. There exist $a \in L$ and $u \in \gamma_{c-1}(L) \subset \gamma_{c-1}(\mathfrak g_{\mathbb Q})$ such that $z := [a,u] \neq 0$ is central. Put $v_n := u + n^c z$ for $n\geq 1$. Then 
	\[
	e^{\ad n^c a}(u) = u + [n^c a, u] = u+ n^c z = v_n,
	\]
	hence $u$ and $v_n$ are conjugate in $L$. Moreover, $\| u\|_{\mathrm{Gv}} \ll 1$ and $ \|v_n\|_{\mathrm{Gv}} \ll (n^c)^{1/c}=n$. 
	
	It remains to show that any conjugator from $u$ to $v_n$ has length $\gg n^c$. Let $\|\cdot\|_1$, $\|\cdot\|_c$ be vector space norms on $\mathfrak g_{\mathbb Q} / \gamma_2(\mathfrak g_{\mathbb Q})$ and $\gamma_c(\mathfrak g_{\mathbb Q})$ and consider the linear map
	\[
	\beta\colon \mathfrak g_{\mathbb Q} / \gamma_2(\mathfrak g_{\mathbb Q}) \longrightarrow \gamma_c(\mathfrak g_{\mathbb Q}), \qquad \beta\big( \bar x \big) = [x,u]. 
	\]
	If $x\in L$ conjugates $u$ to $v_n$, then $e^{\ad x}(u)=  u + [x,u] =v_n$ implies $[x,u]=n^cz$. Since $\beta$ is linear, it is Lipschitz continuous; thus
	$
 n^c \asymp \|n^cz\|_c = \|\beta(\bar x)\|_c \ll \|\bar x \|_1 .
	$ 
	Since $\|\bar x \|_1 \ll \|x\|_{\mathrm{Gv}}$, this concludes the proof.
\end{proof}

\subsection{Rational forms of real graph Lie algebras} \label{subsec:rationalforms-graphalgebras} The final tool that we will use in this paper is the Deré--Witdouck rational form classification of $c$-step nilpotent Lie algebras associated to graphs \cite{DW23}. Here we recall their classification. 

\medskip 

Let $G$ be a finite simple undirected graph, let $c\geq2$, and let $\mathfrak n_{G,c}(\mathbb R)$ be the \emph{$c$-step nilpotent real Lie algebra associated to $G$}. More precisely, $\mathfrak n_{G,c}(\mathbb R)$ is the free $c$-step nilpotent real Lie algebra generated by the vertices of $G$, quotiented by the ideal generated by the brackets $[\alpha,\beta]$ for which $\{\alpha,\beta\}$ is not an edge of $G$. Unless $G$ is edgeless, this Lie algebra has nilpotency class exactly $c$.

Two vertices $\alpha, \beta$ of $G$ are called \emph{coherent} if the transposition $(\alpha \,\, \beta)$ swapping them defines a graph automorphism of $G$. This is an equivalence relation on the vertex set; its equivalence classes are called the \emph{coherent components}.

Then one defines the quotient graph $\overline{G}$, whose vertices are the coherent components. In contrast to $G$, this quotient is weighted by the number of vertices in each coherent component and may have loops. Two coherent components, possibly equal, are joined by an edge in $\overline{G}$ whenever at least one edge occurs between them in $G$. This is well-defined by \cite[Lemma 2.4]{DW23}: if one edge occurs between two coherent components, then all possible edges between them occur. An automorphism of $\overline G$ preserves its weights and loops.

The following theorem classifies the rational forms of the real graph Lie algebras $\mathfrak n_{G,c}(\mathbb R)$ for $c \geq 2$.  
For a finite totally real Galois extension $L/\mathbb Q$ and a faithful action $\rho\colon \operatorname{Gal}(L/\mathbb Q) \hookrightarrow \operatorname{Aut}(\overline G)$, Deré--Witdouck define a rational form $\mathfrak n_{\rho,c}({\mathbb Q})$ of $\mathfrak n_{G,c}(\mathbb R)$ as the twisted fixed-point algebra 
\[\mathfrak n_{\rho,c}({\mathbb Q}) := \big\{ v\in \mathfrak n_{G,c}(L) \:\big|\: i\big(\rho(\sigma)\big)(\sigma v) = v \text{ for all } \sigma \in \operatorname{Gal}(L/\mathbb Q)\big\},\]
where 
\begin{itemize}
	\item $i\colon \operatorname{Aut}(\overline{G}) \hookrightarrow \operatorname{Aut}(\mathfrak n_{G,c}(L))$ is the injective group morphism induced by a chosen fixed splitting $r\colon \operatorname{Aut}(\overline{G}) \hookrightarrow \operatorname{Aut}(G)$ of the natural projection $\operatorname{Aut}(G)\to \operatorname{Aut}(\overline{G})$, 
	\item $\operatorname{Gal}(L/\mathbb Q)$ acts naturally and coefficientwise on $\mathfrak n_{G,c}(L)$. 
\end{itemize}
\cite[Thm.\ A]{DW23} states that all rational forms of $\mathfrak n_{G,c}(\mathbb R)$ are precisely of this form.
\begin{theorem}\label{thm:rationalforms-graphalgebras}
	Every rational form of $\mathfrak n_{G,c}(\mathbb R)$ is $\mathbb Q$-isomorphic to a Lie algebra $\mathfrak n_{\rho,c}({\mathbb Q})$ for some finite totally real Galois extension $L/\mathbb Q$ and a faithful action $\rho\colon \operatorname{Gal}(L/\mathbb Q)\hookrightarrow \operatorname{Aut}(\overline{G}).$ Conversely, every such pair $(L,\rho)$ defines a rational form of $\mathfrak n_{G,c}(\mathbb R)$. 
	
	Moreover, given two such pairs $(L,\rho)$, $(L',\rho')$, the induced rational forms are $\mathbb Q$-isomorphic if and only if $L=L'$ as subfields of $\mathbb R$ and there exists $\varphi \in \operatorname{Aut}(\overline{G})$ such that \[
	\rho'(\sigma)  	= \varphi \, \rho(\sigma)\,  \varphi^{-1} \qquad \text{for all } \sigma \in \operatorname{Gal}(L/\mathbb Q).
	\]
\end{theorem}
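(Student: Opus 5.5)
The plan is to treat this as a Galois descent statement. By the standard dictionary, the $\mathbb Q$-forms of $\mathfrak n_{G,c}(\mathbb R)$ that become isomorphic to the split form $\mathfrak n_{G,c}(\mathbb Q)$ over a finite Galois extension $L/\mathbb Q$ are classified by the nonabelian cohomology set $H^1(\operatorname{Gal}(L/\mathbb Q),\operatorname{Aut}(\mathfrak n_{G,c}(L)))$.

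First, I would show that every rational form $\mathfrak h$ splits over some number field. An $\mathbb R$-isomorphism $\mathfrak h\otimes\mathbb R\cong\mathfrak n_{G,c}(\mathbb R)$ is a real point of the $\mathbb Q$-variety of isomorphisms. That variety is a torsor under the algebraic group $\operatorname{Aut}(\mathfrak n_{G,c})$ and has a real point, so it has an $\overline{\mathbb Q}$-point and hence a point over a finite Galois $L\subset\overline{\mathbb Q}$. The condition that the form becomes split over $\mathbb R$ will later force the cocycle to be trivial on complex conjugation. This is what makes $L$ totally real after shrinking to the fixed field of the kernel of the resulting homomorphism.

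Second, and this is the heart of the argument, I would compute $\operatorname{Aut}(\mathfrak n_{G,c})$ well enough to evaluate $H^1$. Every automorphism induces a linear automorphism of the abelianization $V=\mathfrak n/\gamma_2$. The kernel of this map is unipotent, and the map splits via graded automorphisms. The image is the group of $g\in GL(V)$ preserving the subspace of quadratic relations. I would prove, using the coherence structure (the edge property between coherent components of \cite[Lemma 2.4]{DW23}), that this image is an extension
\[
1\to U\rtimes \textstyle\prod_{\lambda} GL_{n_\lambda}\to \operatorname{Aut}_{\mathrm{gr}}\to \operatorname{Aut}(\overline G)\to 1 .
\]
Here $\lambda$ runs over the coherent components, $n_\lambda$ is the weight of $\lambda$, $U$ is unipotent (it comes from maps adding multiples of vertices dominated in the neighborhood order), and the quotient is split by the chosen $r$.

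Third, I would compute $H^1$ by dévissage. Unipotent groups over characteristic zero have trivial $H^1$, and products of $GL_n$ have trivial $H^1$ by Hilbert 90. Twisting by the quotient action, these vanishings also hold for the twisted forms. So the map $H^1(\operatorname{Gal},\operatorname{Aut})\to H^1(\operatorname{Gal},\operatorname{Aut}(\overline G))$ is injective, and it is surjective via the splitting $i\circ r$. Since $\operatorname{Gal}(L/\mathbb Q)$ acts trivially on the finite group $\operatorname{Aut}(\overline G)$, $H^1$ is $\operatorname{Hom}(\operatorname{Gal},\operatorname{Aut}(\overline G))$ modulo conjugation. This gives the isomorphism criterion $\rho'=\varphi\rho\varphi^{-1}$. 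Passing to the fixed field of $\ker\rho$ makes $\rho$ faithful and makes $L$ canonical, which gives the condition $L=L'$. The twisted fixed-point algebra is exactly the descended form attached to the cocycle $\sigma\mapsto i(\rho(\sigma))$.

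Finally, the real condition: a real form is split if and only if the restriction to the decomposition group at infinity, generated by complex conjugation, is trivial in $H^1(\mathbb R)$. Running the same dévissage over $\mathbb R$, this means $\rho(\text{conj})=1$. Faithfulness then forces complex conjugation to be trivial on $L$, so $L$ is totally real. Conversely, any such $(L,\rho)$ gives a cocycle with trivial real restriction, hence a rational form of $\mathfrak n_{G,c}(\mathbb R)$.

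I expect the main obstacle to be the second step: the precise structure of the automorphism group, and especially the proof that the only non-connected part of the reductive quotient is $\operatorname{Aut}(\overline G)$ acting by permuting coherent components. I would attack it by characterizing the "vertex lines" up to the $GL_{n_\lambda}$ blocks intrinsically, via centralizer dimensions of generic elements of $V$.
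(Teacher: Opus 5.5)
The paper does not prove this statement; it quotes it from \cite[Thm.~A]{DW23}, so there is no in-paper argument to compare yours with. The cohomological part of your outline is the standard Galois-descent route, and it is sound.
\begin{itemize}
\item $L$-forms are classified by $H^1(\operatorname{Gal}(L/\mathbb Q),\operatorname{Aut}\mathfrak n_{G,c}(L))$.
\item Since $i\circ r$ takes values in $\mathbb Q$-rational automorphisms, the Galois action on $\operatorname{Aut}(\overline G)$ is trivial, and the splitting gives surjectivity onto $H^1(\operatorname{Aut}(\overline G))$.
\item Twisting the identity component by $i\circ r\circ\rho$ turns $\prod_\lambda GL_{n_\lambda}$ into a product of Weil restrictions, which have trivial $H^1$ by Shapiro and Hilbert~90. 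The twisted unipotent radical stays unipotent. Together these give injectivity.
\item Inflation is injective, so taking $L$ to be the fixed field of $\ker\rho$ makes $L$ canonical.
\item Restricting to a decomposition group at $\infty$ gives $\rho(c)=1$. Faithfulness and normality of $L$ then make $L$ totally real.
\end{itemize}

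The genuine gap is Step~2, which you assert but do not prove, and on which everything rests. You need exactly two things. First, $\pi_0\operatorname{Aut}(\mathfrak n_{G,c})\cong\operatorname{Aut}(\overline G)$ via $i\circ r$. Second, $\operatorname{Aut}^0$ is unipotent-by-$\prod_\lambda GL_{n_\lambda}$. Note that injectivity of $\operatorname{Aut}(\overline G)\to\pi_0$ is what yields the ``only if'' half of the isomorphism criterion and the conclusion $\rho(c)=1$. Without it you only get conjugacy in some quotient of $\operatorname{Aut}(\overline G)$.

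Moreover, your proposed attack cannot work as stated. Vertex lines, and even the blocks $V_\lambda$, are not intrinsic. Whenever $N(\mu)\subseteq N(\alpha)\cup\{\alpha\}$, the transvection $\alpha\mapsto\alpha+t\mu$ is an automorphism lying in the identity component, with $\mu$ possibly in a different coherent component. For the path with edges $\{a,b\},\{b,c\}$, the map $b\mapsto b+ta$ is an automorphism. So $V_{\{b\}}=\mathbb Q b$ is not invariant, $\dim C(b)=\dim C(b+ta)$, and no centralizer-dimension invariant can single out $\mathbb Q b$.

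A torus argument does work.
\begin{itemize}
\item The diagonal torus $D$ in the vertex basis consists of automorphisms and is maximal in $GL(V)$, hence maximal in $\operatorname{Aut}^0_{\mathrm{gr}}$.
\item By conjugacy of maximal tori over $\overline{\mathbb Q}$, every $g\in\operatorname{Aut}_{\mathrm{gr}}$ is congruent modulo $\operatorname{Aut}^0_{\mathrm{gr}}$ to a monomial matrix. A monomial matrix preserves the coordinate subspace $R=\operatorname{span}\{\alpha\wedge\gamma:\{\alpha,\gamma\}\notin E\}$ if and only if its permutation lies in $\operatorname{Aut}(G)$. This gives surjectivity of $\operatorname{Aut}(G)\to\pi_0$.
\item For injectivity, decompose the degree-$0$ derivations under the diagonal subalgebra $\mathfrak d$. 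They are $\mathfrak d\oplus\bigoplus\mathbb Q E_{\mu\alpha}$, summed over the pairs with $N(\mu)\subseteq N(\alpha)\cup\{\alpha\}$.
\item A pair occurs in both directions exactly when $\mu,\alpha$ are coherent. The remaining root lines span a strictly block-triangular nilpotent ideal; closure under brackets gives the required transitivity.
\item Hence the Levi factor is $\prod_\lambda\mathfrak{gl}(V_\lambda)$, and the Weyl group of $\operatorname{Aut}^0$ with respect to $D$ is $\prod_\lambda\operatorname{Sym}(\lambda)$. It follows that $\pi_0\cong\operatorname{Aut}(G)/\prod_\lambda\operatorname{Sym}(\lambda)\cong\operatorname{Aut}(\overline G)$.
\end{itemize}
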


\addtocontents{toc}{\protect\setcounter{tocdepth}{1}}

\section{No quasi-isometry invariance (Theorem~\ref{thm:notQIinvariant})} \label{sec:noQI}

\noindent Let $m \geq 1$ and let $\mathsf H(\mathbb R)^m= \mathsf H(\mathbb R) \times \cdots \times \mathsf H(\mathbb R)$ be the $m^\text{th}$ direct power of the real Heisenberg group. 

\setcounter{letterthmbody}{2}

\begin{letterthmbody}\label{thmbody:notQIinvariant}
	Let $m \geq 1$. The possible growth types of conjugator length functions of cocompact lattices in $\mathsf H(\mathbb R)^m$  are exactly \(
	\big\{ n^2, n^3, \ldots, n^{m+1} \big\}.
	\) 
	
	In particular, conjugator length is not a quasi-isometry invariant for nilpotent groups.
\end{letterthmbody}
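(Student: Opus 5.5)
The plan is to combine the rational-form classification (Theorem~\ref{thm:rationalforms-graphalgebras}) with a direct computation of conjugator length for Heisenberg Lie rings over totally real number fields. By Theorem~\ref{theorembody:commensurabilityinvariance} and Proposition~\ref{prop:full-liering-CL}, $\CL$ of a lattice depends only on its rational Mal'cev Lie algebra, and these are exactly the rational forms of $\mathfrak h(\mathbb R)^m=\mathfrak n_{G,2}(\mathbb R)$, where $G$ is the graph consisting of $m$ disjoint edges. The two endpoints of each edge are coherent, so $\overline G$ has $m$ vertices, each of weight $2$ and carrying a loop, and $\operatorname{Aut}(\overline G)=S_m$.

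\textbf{Step 1: identify the rational forms.} A faithful action $\rho\colon \operatorname{Gal}(L/\mathbb Q)\hookrightarrow S_m$ splits $\{1,\dots,m\}$ into orbits. An orbit of size $d_i$ corresponds to the fixed field $K_i\subset L$ of a point stabiliser, which is totally real of degree $d_i$. Unwinding the twisted fixed-point algebra, I expect
\[
\mathfrak n_{\rho,2}(\mathbb Q)\cong \bigoplus_i \mathrm{Res}_{K_i/\mathbb Q}\,\mathfrak h(K_i),\qquad \textstyle\sum_i d_i=m .
\]
Conversely, every partition $m=\sum d_i$ with totally real fields $K_i$ of degree $d_i$ arises (for instance, $K_i$ can be taken to be a totally real subfield of a cyclotomic field). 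As full Lie ring I take $\bigoplus_i \mathfrak h(\mathcal O_{K_i})$.

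\textbf{Step 2: direct sums.} For a direct sum of Lie rings, conjugacy and Guivarc'h length decompose factorwise. Hence $\CL$ of the sum is $\simeq$ the maximum of the $\CL$'s of the summands. The theorem therefore reduces to the claim $\CL_{\mathfrak h(\mathcal O_K)}(n)\simeq n^{d+1}$ for $K$ totally real of degree $d$. The set of exponents then equals $\{\max_i d_i+1\}$, which ranges over exactly $\{2,\dots,m+1\}$. Since all these lattices are quasi-isometric to $\mathsf H(\mathbb R)^m$, this also gives the failure of quasi-isometry invariance.

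\textbf{Step 3: the Heisenberg computation.} Write elements as $(a,b,c)$ with $a,b,c\in\mathcal O_K$ and $c$ central, measured by $\|\cdot\|_K$. A Guivarc'h length $\le n$ means $\|a\|,\|b\|\le n$ and $\|c\|\le n^2$. Conjugacy of $u=(a,b,c)$ to $v=(a,b,c')$ amounts to solving
\[
aq-bp=w:=c'-c,\qquad \|w\|\ll n^2,
\]
for $(p,q)\in\mathcal O_K^2$. The solution set is a translate of the rank-$d$ lattice $\Lambda_{a,b}=\{(p,q): aq=bp\}$, a fractional-ideal multiple of $(a,b)$. For the upper bound $n^{d+1}$, I would argue as follows.
\begin{itemize}
\item First find a particular solution of size $\ll n^{d+1}$ by working embedding-by-embedding. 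The ideal $(a,b)$ has norm $\ll n^d$, so $w$ can be divided while paying at most the factor $N(a,b)\cdot\|w\|/\ldots$. Concretely, this should follow from Cramer's rule applied to the $\mathbb Z$-linear system of size $2d\times d$, whose minors are $\ll n^{d-1}\cdot n^2$.
\item Then reduce modulo $\Lambda_{a,b}$, whose covering radius is bounded using a reduced basis.
\end{itemize}
The lower bound $n^{d+1}$ is where Dirichlet's unit theorem enters. I would choose $a,b$ as units, or products of units with a fixed element, such that $\sigma_1$ of them is $\approx n$ and the other embeddings are tiny. This makes $\Lambda_{a,b}$ very elongated, so that some $w$ with $\|w\|\le n^2$ forces every solution to have size $\gg n^{d+1}$.

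\textbf{Main obstacle.} I expect the main difficulty to be exactly this lower-bound construction. A naive choice, $b=0$ and $a$ a unit, only yields exponent $2+\tfrac1{d-1}$. Reaching $d+1$ requires exploiting $b\neq0$, so that the kernel lattice, rather than just the division $w/a$, is distorted in all $d-1$ small embeddings simultaneously. I would first try $a=1$ and $b=\varepsilon$, with $\varepsilon$ a unit of the shape above. In that case $p$ is free, $q=w+\varepsilon p$ is forced, and the problem becomes the inhomogeneous minimisation of $\max(\|p\|,\|w+\varepsilon p\|)$ over $p\in\mathcal O_K$. I would then pick $w$ adversarially in the dual direction and estimate this minimum via the geometry of the lattice $\{(p,\varepsilon p)\}\subset\mathbb R^{2d}$.
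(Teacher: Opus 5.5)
Your Steps 1 and 2 match the paper. Your upper-bound sketch is essentially the Borosh--Flahive--Rubin--Treybig estimate behind the Bridson--Riley bound $n^{\dim Z+1}$ that the paper quotes. You need that bound, not bare Cramer's rule, to get an \emph{integral} solution, and the reduction modulo $\Lambda_{a,b}$ is then unnecessary. The genuine gap is the lower bound $n^{d+1}\preceq\CL_{\mathfrak h(\mathcal O_K)}(n)$. You do not prove it, and the route you propose cannot work. With $a=1$, $b=\varepsilon$, the pair $(p,q)=(0,\pm w)$ always solves $aq-bp=w$. So these $u,v$ always admit a conjugator of Guivarc'h length $\le\|w\|\ll n^2$, whatever $w$ you pick. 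More generally, whenever $a$ is a unit, $(0,\pm wa^{-1})$ is a solution, so $\|wa^{-1}\|$ bounds the conjugator length. When $b=0$ this bound is exact, because $q$ is forced.

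The choice you dismissed ($b=0$, $a$ a unit) is the right one. Your estimate $2+\tfrac1{d-1}$ comes from giving the unit the wrong shape. If one embedding of $a$ is $\approx n$ and the other $d-1$ are small, then $\prod_i|\sigma_i(a)|=1$ only lets them be $\approx n^{-1/(d-1)}$. Reverse the shape. By Dirichlet's unit theorem, choose $\varepsilon_n\in\mathcal O_K^\times$ with $\operatorname{Log}(\varepsilon_n)$ within bounded distance of $(-(d-1)\log n,\log n,\ldots,\log n)$. Then $\|\varepsilon_n\|\asymp n$ and $\|\varepsilon_n^{-1}\|\asymp n^{d-1}$. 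Take $u=\varepsilon_nX$ and $v=\varepsilon_nX+n^2Z$, i.e.\ your $a=\varepsilon_n$, $b=0$, $w=n^2\in\mathbb Z$. Both have Guivarc'h length $\asymp n$. Every conjugator has $Y$-coordinate $q=\pm n^2\varepsilon_n^{-1}$. Since $n^2$ is rational, it is equally large at the embedding where $\varepsilon_n$ is tiny, so $\|q\|=n^2\|\varepsilon_n^{-1}\|\asymp n^{d+1}$. No kernel-lattice geometry is involved. This is the paper's argument. Without it or a substitute, your proof obtains neither the exponents $3,\ldots,m+1$ nor the failure of quasi-isometry invariance. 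The upper bound $n^{d+1}$ and the universal $n^2$ lower bound alone do not separate any two lattices.
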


To obtain Theorem~\ref{thmbody:notQIinvariant}, by the Lie and Mal'cev correspondences, it suffices to classify the conjugator length functions of all rational forms of the Lie algebra $\mathfrak h(\mathbb R)^{m}$  of $\mathsf H(\mathbb R)^m$, where $\mathfrak h(\mathbb R)$ is the $3$-dimensional real Heisenberg Lie algebra
\[
\mathfrak h(\mathbb R) = \mathbb R X \oplus \mathbb R Y \oplus \mathbb R Z, \qquad [aX,bY] = abZ.
\]

When $K$ is a number field of degree $d = [K : \mathbb Q]$, we will always view \[
\mathfrak h(K) = K X \oplus K Y \oplus K Z, \qquad [aX,bY] = ab Z
\] 
as a $3d$-dimensional $\mathbb Q$-Lie algebra by restricting scalars. 

\medskip 

In order to prove Theorem~\ref{thmbody:notQIinvariant}, it suffices to first classify all rational forms of $\mathfrak h(\mathbb R)^{ m}$ and then compute their conjugator length functions.

\begin{proposition}[Rational forms of $\mathfrak h(\mathbb R)^{ m}$]\label{prop:rationalFormsOfh(R)m}
	The rational forms of $\mathfrak h(\mathbb R)^{ m}$ are exactly the $\mathbb Q$-Lie algebras of the form \[
	\mathfrak h(K_1) \oplus \ldots \oplus \mathfrak h(K_s),
	\]
	for totally real number fields $K_1, \ldots, K_s$ with $[K_1 : \mathbb Q] + \ldots + [K_s:\mathbb Q] = m$. 
	
	Two such rational forms $\bigoplus_{i=1}^s \mathfrak h(K_i)$ and $\bigoplus_{i = 1}^t \mathfrak h(L_i)$ are $\mathbb Q$-isomorphic if and only if $s = t$ and, after reordering, $K_i \cong L_i$. 
\end{proposition}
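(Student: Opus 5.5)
The plan is to apply Theorem~\ref{thm:rationalforms-graphalgebras} to the graph $G$ that realizes $\mathfrak h(\mathbb R)^m$ as a graph Lie algebra, namely the disjoint union of $m$ edges $\{x_j,y_j\}$, $j=1,\ldots,m$, with $c=2$. Indeed, $\mathfrak n_{G,2}(\mathbb R)$ is then generated by $x_j,y_j$ with only the brackets $[x_j,y_j]=z_j$ surviving, which is $\mathfrak h(\mathbb R)^m$. (For $m=1$ it is also consistent.)

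\emph{Coherent components and $\overline G$.} First I determine the coherent components. For $m\geq 2$, the transposition $(x_j\, y_j)$ is a graph automorphism, while a transposition of vertices from different edges is not, since it would not preserve adjacency. So the coherent components are the $m$ pairs $\{x_j,y_j\}$. The quotient $\overline G$ consists of $m$ vertices of weight $2$, each carrying a loop, with no edges between distinct vertices. Hence $\operatorname{Aut}(\overline G)\cong S_m$. For $m=1$ the single edge is one coherent component and $\operatorname{Aut}(\overline G)$ is trivial. For the splitting $r$, I take the obvious lift of a permutation $\pi$, namely $x_j\mapsto x_{\pi(j)}$ and $y_j\mapsto y_{\pi(j)}$.

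\emph{Translating $(L,\rho)$ into fields.} A faithful action $\rho$ of $\mathrm{Gal}(L/\mathbb Q)$ on $\{1,\ldots,m\}$ decomposes into orbits $O_1,\ldots,O_s$. For each orbit I pick a point $j_i$ with stabilizer $H_i$, and let $K_i=L^{H_i}$. Then $K_i$ is totally real of degree $|O_i|$, so $\sum_i [K_i:\mathbb Q]=m$. Faithfulness means that $L$ is the Galois closure of the compositum of the $K_i$. The twisted fixed-point algebra splits along orbits. On one orbit, a fixed vector is determined by its $j_i$-component, which lies in $\mathfrak h(L)^{H_i}=\mathfrak h(K_i)$, and this component determines the others via $v_{\sigma j_i}=\sigma v_{j_i}$. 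This identifies the summand with $\mathfrak h(K_i)$ viewed over $\mathbb Q$, with brackets matching because the Galois action is by Lie automorphisms. Conversely, given totally real $K_1,\ldots,K_s$, I take $L$ to be the Galois closure of their compositum and let $\mathrm{Gal}(L/\mathbb Q)$ act on $\bigsqcup_i \mathrm{Hom}(K_i,\mathbb R)$. This action is faithful because $L$ is generated by the conjugates of the $K_i$. So every such direct sum is a rational form.

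\emph{Isomorphism classification.} By Theorem~\ref{thm:rationalforms-graphalgebras}, two forms are isomorphic iff $L=L'$ and the two $\mathrm{Gal}(L/\mathbb Q)$-sets on $\{1,\ldots,m\}$ are isomorphic, that is, conjugate by $\varphi\in S_m$. Transitive $\mathrm{Gal}$-sets correspond to conjugacy classes of subgroups, and hence to isomorphism classes of subfields $K_i$. So isomorphic $\mathrm{Gal}$-sets correspond exactly to equal multisets $\{K_i\}$ up to isomorphism. The field $L$ is recovered as the Galois closure of the compositum, so the condition $L=L'$ is automatic once the multisets agree.

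The main obstacle is the bookkeeping in identifying each orbit summand with $\mathfrak h(K_i)$, and checking that the twist $i(\rho(\sigma))\circ\sigma$ is semilinear and gives a Galois descent on the orbit. I would verify this directly on coordinates, using the fact that $L\otimes_{\mathbb Q}K_i\cong L^{O_i}$.
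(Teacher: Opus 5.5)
Your proposal is correct and follows essentially the same route as the paper. In both, the Deré--Witdouck classification is applied to the graph of $m$ disjoint edges, where $\operatorname{Aut}(\overline{G_m})\cong\operatorname{Sym}(m)$ with the natural splitting. Each orbit summand of the twisted fixed-point algebra is identified with $\mathfrak h(L^{H_i})$, arbitrary totally real $K_1,\ldots,K_s$ are realized by the faithful action on cosets (equivalently, embeddings), and conjugacy of the actions becomes equality of multisets of stabilizer classes, hence of fields up to isomorphism.

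Two small remarks. To invoke the theorem you should note that the Galois closure of a compositum of totally real fields is itself totally real. The separate Galois-descent check you flag as the main obstacle is unnecessary, because the theorem already guarantees that $\mathfrak n_{\rho,2}(\mathbb Q)$ is a rational form. Only the coordinate identification is needed, including well-definedness of $v_{\sigma(j_i)}:=\sigma(u_i)$, which holds because $H_i$ fixes $u_i$.
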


\begin{proposition}\label{prop:conjugatorlengthfunctionofbigoplush(K_i)}
	The rational form $\bigoplus_{i=1}^s \mathfrak h(K_i)$ of $\mathfrak h(\mathbb R)^{ m}$ has $\CL(n) \simeq n^{d + 1}$, with $d = \max_i [K_i:\mathbb Q]$.  
\end{proposition}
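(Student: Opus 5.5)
We reduce to a single factor and then solve the conjugacy equation explicitly inside the ring of integers. By Proposition~\ref{prop:full-liering-CL}, it suffices to compute $\CL_L$ for the full Lie ring $L=\bigoplus_{i=1}^s L_i$, where $L_i=\mathcal O_iX\oplus\mathcal O_iY\oplus\mathcal O_iZ$ and $\mathcal O_i$ is the ring of integers of $K_i$.

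\emph{Reduction to one factor.} We take as Guivarc'h length the maximum of the Guivarc'h lengths on the factors. On $\mathcal O_iX\oplus\mathcal O_iY$ we use the norm $\max(\|p\|,\|q\|)$ with $\|\cdot\|=\|\cdot\|_{K_i}$, and on the center we use $\|r\|^{1/2}$. Since $e^{\ad x}$ acts factorwise, conjugacy in $L$ is conjugacy in each factor. Hence $\CL_L(n)=\max_i \CL_{L_i}(n)$, and it remains to show $\CL_{L_i}(n)\simeq n^{[K_i:\mathbb Q]+1}$. Fix $K=K_i$ with $d=[K:\mathbb Q]$ and $\mathcal O=\mathcal O_i$.

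\emph{The equation.} Write $u=aX+bY+cZ$ and $x=pX+qY+rZ$. Then $e^{\ad x}(u)=u+(pb-qa)Z$. So $u$ is conjugate to $v$ if and only if $v-u=wZ$ with $w=pb-qa$ solvable for $p,q\in\mathcal O$, and the central part $r$ of $x$ can be taken to be $0$. If $\|u\|_{\mathrm{Gv}},\|v\|_{\mathrm{Gv}}\le n$, then $\|a\|,\|b\|\ll n$ and $\|w\|\ll n^2$.

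\emph{Upper bound.} Assume $w\neq0$; otherwise $x=0$ works. By symmetry we may assume $a\neq0$ (if $a=0$, swap the roles of $a$ and $b$). Let $(p_0,q_0)$ be any solution. For each $t\in\mathcal O$, the pair $(p_0+ta,\,q_0+tb)$ is again a solution. We therefore replace $p_0$ by its reduction modulo the lattice $a\mathcal O\subset K\otimes\mathbb R\cong\mathbb R^d$. The lattice $a\mathcal O$ contains $a\omega_1,\dots,a\omega_d$ for a fixed integral basis $(\omega_j)$ of $\mathcal O$, and each of these has norm $\ll n$. Hence the covering radius of $a\mathcal O$ is $\ll n$, and we obtain a solution with $\|p\|\ll n$. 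Then $q=(pb-w)/a$. Since $|N(a)|\ge1$ and every conjugate of $a$ has absolute value $\ll n$, we get $\min_j|\sigma_j(a)|\ge |N(a)|/\max_k|\sigma_k(a)|^{d-1}\gg n^{-(d-1)}$. Therefore
\[
\|q\|\ll (n\cdot n+n^2)\,n^{d-1}=n^{d+1},
\]
and the conjugator $x=pX+qY$ has Guivarc'h length $\ll n^{d+1}$.

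\emph{Lower bound.} The idea is to take $b=0$ and $a$ a unit with one very small real conjugate. By Dirichlet's unit theorem, the logarithmic embedding of $\mathcal O^\times$ is a full lattice in the trace-zero hyperplane of $\mathbb R^d$. Hence, for each $n$, there is a unit $\varepsilon_n$ whose log vector lies within bounded distance of $\bigl(-(d-1)\log n,\log n,\dots,\log n\bigr)$. Such a unit satisfies $\|\varepsilon_n\|\ll n$ and $|\sigma_1(\varepsilon_n)|\asymp n^{-(d-1)}$. Put $u=\varepsilon_nX$ and $v=u+n^2Z$. Both have Guivarc'h length $\ll n$, and they are conjugate via $q=-n^2\varepsilon_n^{-1}\in\mathcal O$. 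Since $b=0$, the equation $-qa=n^2$ forces $q=-n^2\varepsilon_n^{-1}$ for every conjugator. Therefore every conjugator has length $\ge\|q\|\ge n^2/|\sigma_1(\varepsilon_n)|\gg n^{d+1}$.

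The main obstacle is the upper bound: a naive Cramer's-rule estimate on the $2d$-variable integral system only gives roughly $n^{d+2}$. The point that gets the sharp exponent is to spend the kernel $\{(ta,tb)\}$ to make $p$ small, and then use the norm lower bound $|N(a)|\ge1$ to control the inverse of $a$. Along the way one must check that the choice of Guivarc'h norm (the max over factors and over embeddings) is legitimate. This follows from the bilipschitz remark on $\|\cdot\|_K$ in \S\ref{subsec:notation} and the independence statement of \cite{Gui73}.
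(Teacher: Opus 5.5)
Your proposal is correct. Your lower bound is the paper's own argument: Dirichlet units $\varepsilon_n$ with one conjugate of size $n^{-(d-1)}$, the pair $u=\varepsilon_nX$, $v=u+n^2Z$, and the forced $Y$-coordinate $-n^2\varepsilon_n^{-1}$.

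The genuine difference is the upper bound. The paper never solves the conjugacy equation. It quotes the general Bridson--Riley theorem (Theorem~\ref{thm:BR26TwoStepUpperBound}), which says every 2-step nilpotent rational Lie algebra has $\CL(n)\preceq n^{\dim Z(\mathfrak g_{\mathbb Q})+1}$, and notes that $\dim_{\mathbb Q} Z(\mathfrak h(K))=d$.

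Your argument is instead self-contained. You use the $\mathcal O$-rank-one kernel $\{(ta,tb): t\in\mathcal O\}$ and the covering radius of $a\mathcal O$ to make $\|p\|\ll n$. You then pay only $\|a^{-1}\|\ll n^{d-1}$, using $|N(a)|\geq 1$.

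What your route buys:
\begin{itemize}
\item It makes the mechanism explicit. The entire loss over the Heisenberg exponent $n^2$ comes from the smallest conjugate of $a$, which is exactly what the unit construction in the lower bound saturates.
\item It avoids the Borosh--Flahive--Rubin--Treybig machinery behind the Bridson--Riley bound.
\item It is in the same spirit as the paper's later Case~2 argument in Lemma~\ref{lemma:CL-is-n2-mtheta}: reduce one unknown modulo a lattice, then divide by $\alpha$.
\end{itemize}
The paper's route is shorter, and it shows that $n^{d+1}$ is an instance of a general phenomenon for 2-step nilpotent groups.

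There are two minor differences. You reduce to a single factor directly in the Lie ring with a max-Guivarc'h norm, which gives the exact identity $\CL_L=\max_i\CL_{L_i}$; the paper instead cites the direct-product fact for groups. Your argument also covers $d=1$ uniformly, rather than invoking the known $n^2$ bound for $H(\mathbb Z)$.
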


Once Propositions~\ref{prop:rationalFormsOfh(R)m} and \ref{prop:conjugatorlengthfunctionofbigoplush(K_i)} are proved, Theorem~\ref{thmbody:notQIinvariant} follows: to realize every exponent $d+1$ for $d\in \{1,\ldots,m\}$, take $\mathfrak h(K) \oplus \mathfrak h (\mathbb Q)^{m-d}$, where $K$ is a totally real number field of degree $d$.

\begin{remark}
	This family of Lie algebras yields many natural 2-step nilpotent groups satisfying the main theorem of \cite{BR26TwoStep}. For each \(m\geq1\), take a totally real number field \(K\) of degree \(m\) and choose a finitely generated torsion-free nilpotent group \(\Gamma\) whose associated rational Mal'cev Lie algebra is \(\mathfrak h(K)\). Then \(\CL_\Gamma(n)\simeq n^{m+1}\).
\end{remark}

\begin{remark}[The case $m=2$] By \cite[Proposition 3.2]{Lau08} or \cite[Example 5.2]{DW23}, the rational forms of $\mathfrak h(\mathbb R)\oplus \mathfrak h(\mathbb R)$ are exactly the $\mathbb Q$-Lie algebras \[
	\mathfrak h(\mathbb Q) \oplus \mathfrak h(\mathbb Q),  \qquad \mathfrak h\big( \mathbb Q(\sqrt d) \big)  \quad \text{for square-free }d > 1.
	\]
	By Proposition \ref{prop:conjugatorlengthfunctionofbigoplush(K_i)}, $\CL_{\mathfrak h(\mathbb Q) \oplus \mathfrak h(\mathbb Q)}(n) \simeq n^2$ while $\CL_{\mathfrak h( \mathbb Q(\sqrt d))}(n) \simeq n^3$. 
\end{remark}

\subsection{Rational forms of $\mathfrak h(\mathbb R)^{ m}$ (Proof of Proposition~\ref{prop:rationalFormsOfh(R)m})} \label{subsec:rationalforms}

The Lie algebra $\mathfrak h(\mathbb R)^{ m}$ has basis $x_1, y_1, z_1, \ldots, x_m, y_m, z_m$ and nonzero brackets \[[x_1,y_1] = z_1, \quad \ldots \quad, [x_m,y_m] =z_m.\] 
With the conventions of \S\ref{subsec:rationalforms-graphalgebras}, $\mathfrak h(\mathbb R)^{ m}$ is the 2-step nilpotent Lie algebra associated to the simple undirected graph $G_m$ with vertices $x_1,y_1,\ldots,x_m,y_m$ and edges $\{x_1,y_1\}, \ldots, \{x_m,y_m\}$. The following figure illustrates the graph $G_m$ and its quotient graph $\overline{G_m}$:  

\begin{center}
	\begin{tikzpicture}[x=0.6cm,y=0.6cm,>=Latex]
		
		\tikzset{
			dot/.style={circle, fill=black, inner sep=2.2pt},
			qvtx/.style={circle, draw=black, fill=white, line width=0.8pt, minimum size=6mm, inner sep=0pt},
			every node/.style={font=\normalsize}
		}
		
		\node at (5.05,2.1) {$G_m$};
		\node at (17.2,2.1) {$\overline{G_m}$};
		
		\node[dot] (a1) at (1.6,1.0) {};
		\node[dot] (a2) at (1.6,-1.0) {};
		\draw[line width=0.8pt] (a1) -- (a2);
		
		\node[dot] (b1) at (4.0,1.0) {};
		\node[dot] (b2) at (4.0,-1.0) {};
		\draw[line width=0.8pt] (b1) -- (b2);
		
		\node at (6.1,0) {$\cdots$};
		
		\node[dot] (c1) at (8.5,1.0) {};
		\node[dot] (c2) at (8.5,-1.0) {};
		\draw[line width=0.8pt] (c1) -- (c2);
		
		\draw[line width=0.9pt,->] (10.3,0) -- (12.3,0);
		
		\draw[line width=0.9pt] (13.9,-0.2)
		to[in=50,out=130,loop,min distance=16mm,looseness=12]
		(13.9,-0.2);
		\node[qvtx] (q1) at (13.9,-0.4) {$2$};

		
		\draw[line width=0.9pt] (16.4,-0.2)
		to[in=50,out=130,loop,min distance=16mm,looseness=12]
		(16.4,-0.2);
		\node[qvtx] (q2) at (16.4,-0.4) {$2$};
		
		\node at (18.4,0) {$\cdots$};
		
		
		\draw[line width=0.9pt] (20.5,-0.2)
		to[in=50,out=130,loop,min distance=16mm,looseness=12]
		(20.5,-0.2);
		\node[qvtx] (q3) at (20.5,-0.4) {$2$};
		
	\end{tikzpicture}
\end{center}

Here we will apply Deré--Witdouck's Theorem~\ref{thm:rationalforms-graphalgebras} from \S\ref{subsec:rationalforms-graphalgebras} to deduce Proposition~\ref{prop:rationalFormsOfh(R)m}.

\begin{proof}[Proof of Proposition~\ref{prop:rationalFormsOfh(R)m}]
	The coherent components of $G_m$ are the pairs
	$\{x_i,y_i\}$, for $1\leq i\leq m$. Hence the quotient graph
	$\overline{G_m}$ consists of $m$ disconnected looped vertices of weight $2$, and therefore $
	\operatorname{Aut}(\overline{G_m})\cong\operatorname{Sym}(m).
	$
	We use the natural splitting $\operatorname{Aut}(\overline{G_m}) \hookrightarrow \operatorname{Aut}(G_m)$ under which a permutation of
	$\{1,\ldots,m\}$ simultaneously permutes the triples
	$(x_j,y_j,z_j)$. Thus it permutes the $m$ summands of
	$\mathfrak n_{G_m,2}(L)=\mathfrak h(L)^m$.
	
	By Theorem~\ref{thm:rationalforms-graphalgebras}, every rational form
	is isomorphic to $\mathfrak n_{\rho,2}(\mathbb Q)$ for a finite totally real Galois extension $L/\mathbb Q$ and a faithful action 
	$
	\rho\colon\operatorname{Gal}(L/\mathbb Q)
	\hookrightarrow\operatorname{Sym}(m)
	$ 
	on $\Omega:=\{1,\ldots,m\}$. We write $\sigma(j):= \rho(\sigma)(j)$ for this action. Write
	$\Omega=\Omega_1\sqcup\cdots\sqcup\Omega_s$ for its orbit
	decomposition. Choose $j_i\in\Omega_i$ and put
	\[
	H_i:=\operatorname{Stab}_{\operatorname{Gal}(L/\mathbb Q)}(j_i),
	\qquad
	K_i:=L^{H_i}.
	\]
	
	Denote the $j^{\text{th}}$ summand of $\mathfrak h(L)^m$ by $\mathfrak h(L)_j$ and write an element of $\mathfrak h(L)^m$ as
	$v=(v_j)_{j\in\Omega}$. Under the twisted Galois action,
	$\sigma$ sends the coordinate $v_j \in \mathfrak h(L)_j$ to $\sigma(v_j) \in \mathfrak h(L)_{\sigma(j)}$. Consequently,
	\[
	v\in\mathfrak n_{\rho,2}(\mathbb Q)
	\quad\Longleftrightarrow\quad
	v_{\sigma (j)}=\sigma(v_j)
	\quad\text{for all $\sigma\in\operatorname{Gal}(L/\mathbb Q)$ and $j\in\Omega$}.
	\]
	
	Consider $j_i\in\Omega_i$. Since $\Omega_i$ is the orbit of $j_i$, every
	$j\in\Omega_i$ has the form $j=\sigma(j_i)$ for some
	$\sigma\in\operatorname{Gal}(L/\mathbb Q)$. For
	$v\in\mathfrak n_{\rho,2}(\mathbb Q)$, the fixed-point condition then
	gives 
	$ 
	v_j=v_{\sigma(j_i)}=\sigma(v_{j_i}).
	$ 
	Hence all coordinates $v_j$, with $j\in\Omega_i$, are determined by
	the single coordinate $v_{j_i}$. Moreover, if $h\in H_i$, then $h(j_i)=j_i$, and therefore
	$
	v_{j_i}=v_{h(j_i)}=h(v_{j_i}).
	$ 
	Thus
	$ 
	v_{j_i}\in\mathfrak h(L)^{H_i}
	=\mathfrak h(K_i).
	$
	
	Conversely, let $u_i\in\mathfrak h(K_i)$. For $j\in\Omega_i$, choose
	$\sigma$ such that $j=\sigma(j_i)$ and define
	$v_j:=\sigma(u_i)$. If also $j=\tau(j_i)$, then
	$\tau^{-1}\sigma\in H_i$, and since $u_i$ is fixed by $H_i$, the definition of $v_j$ is independent of the choice of
	$\sigma$.
	
	It follows that the map
	\[
	\bigoplus_{i=1}^s\mathfrak h(K_i)
	\longrightarrow\mathfrak n_{\rho,2}(\mathbb Q),
	\qquad
	(u_i)_{i=1}^s\longmapsto(v_j)_{j\in\Omega},
	\qquad
	v_{\sigma(j_i)}:=\sigma(u_i),
	\]
	is a $\mathbb Q$-Lie algebra isomorphism; here the preservation of the
	bracket follows because the bracket is taken coordinatewise. Finally,
	each $K_i$ is totally real, and the orbit-stabilizer theorem gives
	$[K_i:\mathbb Q]=[\operatorname{Gal}(L/\mathbb Q):H_i]=\#\Omega_i$. Hence
	$\sum_i[K_i:\mathbb Q]=m$.
	
	\medskip 
	
	Conversely, let $K_1,\ldots,K_s \subset \mathbb R$ be totally real number fields whose
	degrees sum to $m$. Let $L$ be the smallest number field containing their Galois closures $\widetilde{K}_i$; then $L/\mathbb Q$ is Galois and totally real. Put $H_i=\operatorname{Gal}(L/K_i).$ 
	The natural action of $\operatorname{Gal}(L/\mathbb Q)$ on
	$\Omega := \coprod_i \operatorname{Gal}(L/\mathbb Q)/H_i$, where $\# \Omega = m$, is faithful, since its
	kernel is the intersection of the normal cores 
	\[
	\bigcap_{i=1}^s\operatorname{core}_{\operatorname{Gal}(L/\mathbb Q)}(H_i)
	= \bigcap_{i=1}^s 	\operatorname{Gal}\bigl(L/\widetilde K_i\bigr) = 
	\operatorname{Gal}\bigl(L/\widetilde K_1\cdots\widetilde K_s\bigr)
	=1,
	\]
	where $\widetilde K_i$ is the Galois closure of $K_i$. The associated
	rational form is therefore
	$\bigoplus_i\mathfrak h(K_i)$.
	
	\medskip 
	
	Finally, Theorem~\ref{thm:rationalforms-graphalgebras} says that two such forms are isomorphic precisely when their corresponding Galois fields coincide, say $L$, and the resulting $\operatorname{Gal}(L/\mathbb Q)$-actions are conjugate in $\operatorname{Sym}(m)$. Such a conjugacy identifies orbits and point stabilizers. Since $L^{\gamma H\gamma^{-1}}=\gamma(L^H)$, the corresponding fixed fields are $\mathbb Q$-isomorphic. Thus $s=t$ and, after reordering, $K_i\cong_{\mathbb Q}L_i$.

	Conversely, if $K_i \cong  L_i$ after reordering, then clearly \[\bigoplus_{i=1}^s \mathfrak h(K_i) \cong_{\mathbb Q} \bigoplus_{i=1}^s \mathfrak h(L_i). \qedhere \] 
\end{proof}

\subsection{Conjugator length function of $\bigoplus_{i=1}^s \mathfrak h(K_i)$ (Proof of Proposition~\ref{prop:conjugatorlengthfunctionofbigoplush(K_i)})}
It is easy to see that the conjugator length function of a direct product of groups is equivalent to the maximum of the conjugator length functions of the factors, see \cite[\S4.11]{BRS26}. 

Consequently, to prove Proposition~\ref{prop:conjugatorlengthfunctionofbigoplush(K_i)}, it suffices to show that for a fixed totally real number field $K$ of degree $d = [K:\mathbb Q]$, the $\mathbb Q$-Lie algebra $\mathfrak h(K)$ has $\CL(n) \simeq n^{d + 1}$. The case $d = 1$ is already known: the discrete Heisenberg group $H(\mathbb Z)$ has rational Mal'cev Lie algebra $\mathfrak h(\mathbb Q)$ and $\CL_{H(\mathbb Z)}(n) \simeq n^2$; see \cite[Thm.~4.9]{BRS26}. So we may assume that $d \geq 2$. 

\medskip

\subsubsection{Upper bound.} We apply one of the main results of Bridson--Riley \cite[Thm.~3]{BR26TwoStep}, restated below in our notation.
\begin{theorem} \label{thm:BR26TwoStepUpperBound}
	Any 2-step nilpotent rational Lie algebra $\mathfrak g_{\mathbb Q}$ has $\CL(n) \preceq n^{\dim Z(\mathfrak g_{\mathbb Q}) + 1}$. 
\end{theorem}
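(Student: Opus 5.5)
We follow the reduction used throughout the paper and pass to a full Lie ring. By Proposition~\ref{prop:full-liering-CL} we may replace $\mathfrak g_{\mathbb Q}$ by any full Lie ring. Fix a complement $V_1$ with $\mathfrak g_{\mathbb Q}=V_1\oplus\gamma_2(\mathfrak g_{\mathbb Q})$, and choose lattices $L_1\subset V_1$ and $L_2\subset \gamma_2(\mathfrak g_{\mathbb Q})$ with $[L_1,L_1]\subset L_2$. Then $L:=L_1\oplus L_2$ is a full Lie ring, and we use the Guivarc'h length $\|a_1+a_2\|_{\mathrm{Gv}}=\max(|a_1|,|a_2|^{1/2})$ given by sup-norms in fixed $\mathbb Z$-bases. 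Since the class is $2$, we have $e^{\ad x}(u)=u+[x,u]$. Hence $u,v\in L$ are conjugate if and only if the central vector $w:=v-u$ lies in $[L,u]$.

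The key observation is that $[x,u]=[x_1,u_1]$ depends only on the $L_1$-components. So it suffices to find $x_1\in L_1$ with $[x_1,u_1]=w$, and to take $x=x_1$, whose Guivarc'h length is $|x_1|$. In the chosen bases this is an integral linear system $Ax_1=w$. Here $A$ is a $k\times m$ integer matrix with $k=\dim\gamma_2(\mathfrak g_{\mathbb Q})\le\dim Z(\mathfrak g_{\mathbb Q})$, and $A$ depends linearly on $u_1$, so its entries are $\ll n$. Since $u$ and $v$ agree modulo $\gamma_2$, we have $w\in L_2$ with $|w|\le |u_2|+|v_2|\ll n^2$. Let $r=\operatorname{rank}A\le k$. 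We must show that a solvable system of this form has an integral solution with $|x_1|\ll n^{r+1}$.

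The argument has two steps.
\begin{enumerate}
\item \emph{A small real solution.} Pick $r$ linearly independent rows and an $r\times r$ nonsingular minor $M$ of $A$. Setting the other coordinates to $0$ and applying Cramer's rule gives a rational solution $x_0$. Its entries are ratios of $r\times r$ determinants in which one column is replaced by entries of $w$, divided by the nonzero integer $\det M$. Hence $|x_0|\ll n^{r-1}\cdot n^2=n^{r+1}$.
\item \emph{An integral solution near $x_0$.} Take any integral solution $x^\ast$, which exists by assumption. Then $x^\ast-x_0\in\ker_{\mathbb R}A$. The lattice $\ker_{\mathbb Z}A=\ker A\cap\mathbb Z^m$ spans $\ker_{\mathbb R}A$. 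Again by Cramer's rule, it contains $m-r$ linearly independent vectors whose entries are $r\times r$ minors of $A$, hence of size $\ll n^r$. The parallelepiped they span has diameter $\ll n^r$, so some $\kappa\in\ker_{\mathbb Z}A$ satisfies $|x^\ast-x_0-\kappa|\ll n^r$. Then $x_1:=x^\ast-\kappa$ is an integral solution with $|x_1|\le|x_0|+O(n^r)\ll n^{r+1}$.
\end{enumerate}

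These steps give $\CL_L(n)\ll n^{r+1}\le n^{\dim Z(\mathfrak g_{\mathbb Q})+1}$, which proves the theorem.

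The main obstacle is the second step. We need the integral solution set to be close to the small rational solution, and this requires controlling the covering radius of the integral kernel lattice; a naive Siegel-type bound would lose an extra factor of $n$. The point to verify carefully is that the independent kernel vectors produced by Cramer's rule lie in $\mathbb Z^m$, which holds because they are built from integer minors. Since only their span is needed, this gives a covering-radius bound of order $n^r$ rather than anything involving $|w|$. The first step is harmless because the factor $1/|\det M|\le 1$ only helps.
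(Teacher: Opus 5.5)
Your argument is correct, but it takes a different route from the paper. The paper gives no argument of its own here. It takes a lattice $\Gamma$ with rational Mal'cev algebra $\mathfrak g_{\mathbb Q}$, notes that the torsion-free rank of $Z(\Gamma)$ equals $\dim Z(\mathfrak g_{\mathbb Q})$, and quotes \cite[Thm.~3]{BR26TwoStep}.

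You instead pass to a graded full Lie ring via Proposition~\ref{prop:full-liering-CL} and prove the underlying Diophantine estimate directly: a consistent integral system $Ax_1=w$ of rank $r$, with entries of $A$ bounded by $\ll n$ and $|w|\ll n^2$, has an integral solution with $|x_1|\ll n^{r+1}$. Both of your steps hold up:
\begin{itemize}
\item Consistency over $\mathbb Q$ guarantees that the Cramer solution of the $r$ selected equations solves the whole system. You use this implicitly; it is worth stating.
\item The $m-r$ adjugate vectors are integral and linearly independent, so they span $\ker_{\mathbb R}A$. This gives a covering radius $\ll n^r$.
\end{itemize}
This is a hands-on version, with a constant depending on $m$, of the Borosh--Flahive--Rubin--Treybig bound \cite{BFRT89} that the paper itself quotes in Case~1 of Lemma~\ref{lemma:CL-is-n2-mtheta}. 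Bounding by the $r\times r$ minors of $(A\mid w)$ gives the same $n^{r+1}$.

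The citation is shorter. Your route is self-contained within the paper's Lie-ring framework, and it gives the sharper exponent $\operatorname{rank}A+1\le\dim\gamma_2(\mathfrak g_{\mathbb Q})+1$. This improves on $\dim Z(\mathfrak g_{\mathbb Q})+1$ whenever the center is strictly larger than $\gamma_2(\mathfrak g_{\mathbb Q})$. For example, for $\mathfrak h(\mathbb Q)\oplus\mathbb Q$ your bound gives the correct $n^2$ rather than $n^3$. For the algebras $\mathfrak h(K)$, where the paper applies the theorem, the two exponents coincide.
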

\begin{proof}
	Let $\Gamma$ be any finitely generated torsion-free nilpotent group with associated rational Mal'cev Lie algebra $\mathfrak g_{\mathbb Q}$. Since the torsion-free rank of $Z(\Gamma) \cong \mathbb Z^d$ equals $d = \dim Z(\mathfrak g_{\mathbb Q})$, the upper bound $\CL(n) \preceq n^{d + 1}$ follows from applying \cite[Thm.~3]{BR26TwoStep} to $\Gamma$.
\end{proof}

\begin{lemma}
	The rational Lie algebra $\mathfrak h(K)$ has $\CL(n) \preceq n^{d+1}$.
\end{lemma}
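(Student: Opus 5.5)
The plan is to apply Theorem~\ref{thm:BR26TwoStepUpperBound} directly to the $\mathbb Q$-Lie algebra $\mathfrak g_{\mathbb Q}=\mathfrak h(K)$. That theorem bounds $\CL$ by $n^{\dim Z(\mathfrak g_{\mathbb Q})+1}$ for any 2-step nilpotent rational Lie algebra. So the only thing to check is that $\mathfrak h(K)$, viewed as a $\mathbb Q$-Lie algebra by restriction of scalars, is 2-step nilpotent and has center of $\mathbb Q$-dimension exactly $d=[K:\mathbb Q]$.

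Both facts follow from the explicit bracket. The bracket on $\mathfrak h(K)=KX\oplus KY\oplus KZ$ is
\[
[aX+bY+cZ,\;a'X+b'Y+c'Z]=(ab'-a'b)Z .
\]
Hence $[\mathfrak h(K),\mathfrak h(K)]\subset KZ$, and $KZ$ is central, so the algebra is 2-step nilpotent. It is nonabelian since $[X,Y]=Z\neq 0$.

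For the center, suppose $aX+bY+cZ$ is central.
\begin{itemize}
\item Bracketing with $Y$ gives $aZ=0$, so $a=0$.
\item Bracketing with $X$ gives $-bZ=0$, so $b=0$.
\end{itemize}
Therefore $Z(\mathfrak h(K))=KZ$. This is a $\mathbb Q$-vector space of dimension $[K:\mathbb Q]=d$, and restricting scalars does not change which elements are central.

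Plugging $\dim Z(\mathfrak h(K))=d$ into Theorem~\ref{thm:BR26TwoStepUpperBound} gives $\CL(n)\preceq n^{d+1}$. By Theorem~\ref{theorembody:commensurabilityinvariance}, this bound is well-defined on the level of the rational Lie algebra. I see no real obstacle in this argument. The only point needing care is that ``center'' must be computed over $\mathbb Q$ after restriction of scalars, which the bracket formula handles directly.
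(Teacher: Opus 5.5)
Your proposal is correct and is essentially the paper's proof: the paper also applies the Bridson--Riley bound $\CL(n)\preceq n^{\dim Z(\mathfrak g_{\mathbb Q})+1}$ (Theorem~\ref{thm:BR26TwoStepUpperBound}) to $\mathfrak h(K)$. It uses the same observation that $Z(\mathfrak h(K))=KZ$ has $\mathbb Q$-dimension $d$, which you verify explicitly from the bracket.
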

\begin{proof}
	This follows from Theorem~\ref{thm:BR26TwoStepUpperBound} since $Z\big(\mathfrak h(K)\big) = KZ$ is $d$-dimensional over $\mathbb Q$.
\end{proof}

\subsubsection{Lower bound.}\label{subsec:lowerboundhK} To prove a matching lower bound, we work in the full Lie ring \[
L:= \mathfrak h(\mathcal O) = \mathcal O X \oplus \mathcal O Y \oplus \mathcal O Z, \qquad [aX,bY] = ab Z,
\]
where $\mathcal O$ denotes the ring of integers of $K$. Choose the Minkowski embedding $K \hookrightarrow \mathbb R^d$ given by the $d$ real embeddings $\sigma_1 = \mathrm{id}, \sigma_2, \ldots, \sigma_d \colon K \hookrightarrow \mathbb R$. Define the norm
\[
\| a \| := \max_i | \sigma_i(a) |, \qquad a \in K
\]
and the Guivarc'h norm
\[
\|aX + bY + cZ \|_{\mathrm{Gv}} := \max \big\{ \|a\|, \|b\|, \|c\|^{1/2} \big\}, \qquad aX + bY + cZ \in L.
\]
To prove the lower bound, we need a quantitative consequence of Dirichlet's unit theorem, for which we first recall the following context (see \cite[Ch.\ 1 \S7]{Neu99}). Consider the logarithmic embedding 
$
\mathrm{Log}\colon \mathcal O^\times \to \mathbb R^d$ defined by $\operatorname{Log}(\varepsilon) = \big(\log |\sigma_1(\varepsilon)|, \ldots, \log | \sigma_{d}(\varepsilon)| \big).
$ 
Since $\varepsilon$ is a unit, we have $\sum_{i =1}^{d} \log |\sigma_i(\varepsilon)| = \log | N_{K / \mathbb Q}(\varepsilon)| = 0.$ Thus $\operatorname{Log}(\mathcal O^\times)$ lies in the $(d-1)$-dimensional vector subspace \[H := \Big\{(t_1,\ldots,t_{d}) \in \mathbb R^d \mid \sum_i t_i = 0\Big\}.\] 

\begin{theorem}[Dirichlet's unit theorem] \label{thm:DirichletUnit}
	The Abelian group $\operatorname{Log}(\mathcal O^\times)$ is a full lattice in $H$.
\end{theorem}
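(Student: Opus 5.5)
The statement is classical (\cite[Ch.\ 1 \S7]{Neu99}). In our setting $K$ is totally real, so all $d$ embeddings are real. I would prove it in two halves: first that $\operatorname{Log}(\mathcal O^\times)$ is a discrete subgroup of $H$, then that it spans $H$ over $\mathbb R$. A discrete subgroup of a real vector space that spans it is a full lattice, so these two halves suffice.

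\emph{Discreteness.} $\operatorname{Log}$ is a group homomorphism into $H$, so its image is a subgroup of $H$. Fix $R>0$ and suppose $\varepsilon\in\mathcal O^\times$ satisfies $\|\operatorname{Log}(\varepsilon)\|_\infty\leq R$. Then $|\sigma_i(\varepsilon)|\leq e^R$ for every $i$. Hence the coefficients of the characteristic polynomial $\prod_i (T-\sigma_i(\varepsilon))\in\mathbb Z[T]$ are integers bounded in terms of $d$ and $R$ alone. There are therefore only finitely many such polynomials, and hence only finitely many such $\varepsilon$. So every bounded region of $H$ meets $\operatorname{Log}(\mathcal O^\times)$ in a finite set, which is discreteness. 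The same argument shows $\ker\operatorname{Log}$ is finite; it consists of the roots of unity, which here are $\pm1$, although we do not need this.

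\emph{Spanning.} This is the main step. I would use Minkowski's convex body theorem for the full lattice $\mathcal O\subset\mathbb R^d$ under the Minkowski embedding. Let $C$ be a constant exceeding the covolume of $\mathcal O$ times a fixed power of $2$. Any box $\{|x_i|\leq c_i\}$ with $\prod_i c_i = C$ then contains a nonzero $a\in\mathcal O$. For such $a$ we have $1\leq|N_{K/\mathbb Q}(a)|\leq C$, and
\[
|\sigma_i(a)| = \frac{|N(a)|}{\prod_{j\neq i}|\sigma_j(a)|}\geq C^{-1}c_i.
\]
Now fix an index $k$. Choose boxes with $c_i$ very small for all $i\neq k$ and $c_k$ compensating. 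This produces a sequence $a_1,a_2,\ldots\in\mathcal O$ with $|\sigma_i(a_{n+1})|<|\sigma_i(a_n)|$ for $i\neq k$ and with $|N(a_n)|\leq C$. There are only finitely many principal ideals of norm at most $C$, so two terms satisfy $a_n\mathcal O=a_m\mathcal O$ with $n<m$. Then $\varepsilon_k := a_m/a_n$ is a unit with $\log|\sigma_i(\varepsilon_k)|<0$ for all $i\neq k$. Since the entries of $\operatorname{Log}(\varepsilon_k)$ sum to $0$, we also get $\log|\sigma_k(\varepsilon_k)|>0$.

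\emph{Conclusion.} Take the $d-1$ vectors $\operatorname{Log}(\varepsilon_1),\ldots,\operatorname{Log}(\varepsilon_{d-1})$ in $H$. The matrix obtained by deleting the last coordinate has positive diagonal entries, negative off-diagonal entries, and positive row sums, because deleting a negative entry from a zero-sum row leaves a positive sum. A standard diagonal-dominance argument then shows this matrix is nonsingular. Hence the $d-1$ vectors are linearly independent in the $(d-1)$-dimensional space $H$, so they span it.

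The delicate point is the Minkowski step. The main obstacle is getting the uniform norm bound $|N(a)|\leq C$ independent of the chosen box, which is what makes the pigeonhole on principal ideals of bounded norm work.
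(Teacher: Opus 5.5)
Your proof is correct, and you were right to read the statement as one about totally real $K$. That is the only case the paper uses: with $\operatorname{Log}$ built from all $d$ embeddings, a field with $r_2\geq1$ pairs of complex embeddings would give rank $d-r_2-1<d-1$, so the image could not be a full lattice in $H$. The paper does not prove the theorem; it quotes \cite[Ch.~1 \S7]{Neu99}, where the argument runs differently. Rather than constructing $d-1$ independent units, Neukirch shows directly that $H/\operatorname{Log}(\mathcal O^\times)$ is compact. He applies Minkowski's theorem to every box $Xy$ with $y$ in the norm-one hypersurface $S$. He then uses representatives $\alpha_1,\ldots,\alpha_N$ of the principal ideals of norm at most $C$ to cover $S$ by multiplicative unit translates of the bounded set $S\cap\bigcup_j X\alpha_j^{-1}$, and taking logarithms gives a bounded $M\subset H$ with $H=M+\operatorname{Log}(\mathcal O^\times)$. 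Both routes rest on the same two inputs: Minkowski's theorem and finiteness of ideals of bounded norm. Yours replaces the covering argument with an explicit construction plus a diagonal-dominance lemma, and produces units with a prescribed sign pattern in log coordinates. Neukirch's delivers at once the covering-radius form, namely that every point of $H$ lies within a fixed distance $R$ of $\operatorname{Log}(\mathcal O^\times)$. That is exactly how the paper uses the theorem in Lemma~\ref{lemma:dirichlet} and in \S\ref{subsec:lowerbounddense}, and it follows immediately from your full-lattice conclusion, with $R$ the diameter of a fundamental parallelepiped. One small remark: the bound $|\sigma_i(a)|\geq C^{-1}c_i$ that you derive is never used. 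The strict decrease of $|\sigma_i(a_n)|$ for $i\neq k$ comes solely from choosing the next $c_i$ below $|\sigma_i(a_n)|$.
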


\begin{lemma}\label{lemma:dirichlet}
	Assume $d \geq 2$. There exists a sequence of units $\varepsilon_n \in \mathcal O^\times$ such that \[
	\| \varepsilon_n \| \asymp n, \qquad \| \varepsilon_n^{-1} \| \asymp n^{d-1}. 
	\]
\end{lemma}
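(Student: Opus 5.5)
We want units $\varepsilon$ for which one embedding is large, $|\sigma_1(\varepsilon)|\asymp n$, and all the other embeddings are small, $|\sigma_i(\varepsilon)|\asymp n^{-1/(d-1)}$ for $i\geq 2$. Then the product formula gives $\|\varepsilon\|\asymp n$ and $\|\varepsilon^{-1}\|=\max_i|\sigma_i(\varepsilon)|^{-1}\asymp n^{1/(d-1)}$. But the statement demands $\|\varepsilon_n^{-1}\|\asymp n^{d-1}$. So the balance should be reversed: make $d-1$ embeddings have size $\asymp n$ and the remaining one have size $\asymp n^{-(d-1)}$. Indeed, $\|\varepsilon\|\asymp n$ combined with the norm condition $\prod_i|\sigma_i(\varepsilon)|=1$ forces $\min_i|\sigma_i(\varepsilon)|\gg n^{-(d-1)}$. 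Equality is attained precisely when all but one embedding have size $\asymp n$.

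In logarithmic coordinates this target is the point
\[
t(n):=\log n\cdot(1,\ldots,1,-(d-1))\in H,
\]
with the distinguished small coordinate placed at index $d$, say. This lies in $H$ because its coordinates sum to zero.

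By Theorem~\ref{thm:DirichletUnit}, $\operatorname{Log}(\mathcal O^\times)$ is a full lattice in $H$, so it has a bounded covering radius $R$ with respect to the sup norm on $H$. For each $n$ I will therefore choose a unit $\varepsilon_n$ with $\|\operatorname{Log}(\varepsilon_n)-t(n)\|_\infty\leq R$. Exponentiating coordinatewise gives
\[
e^{-R}n\leq|\sigma_i(\varepsilon_n)|\leq e^{R}n \quad (i<d),\qquad |\sigma_d(\varepsilon_n)|\asymp n^{-(d-1)},
\]
where the constants depend only on $K$. Since $d\geq 2$, some index $i<d$ exists, and $n\geq n^{-(d-1)}$, so $\|\varepsilon_n\|\asymp n$. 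Similarly,
\[
\|\varepsilon_n^{-1}\|=\max_i|\sigma_i(\varepsilon_n)|^{-1}\asymp\max\bigl(n^{-1},\,n^{d-1}\bigr)=n^{d-1}.
\]

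The only mildly delicate point is that the theorem gives a full lattice in the hyperplane $H$, not merely a discrete subgroup. That fullness is exactly what yields a uniform covering radius and hence the uniform implied constants. This is where the hypothesis $d\geq 2$ enters: it guarantees that $H$, and hence the unit lattice, is nonzero. With that in hand, no further obstacle arises.
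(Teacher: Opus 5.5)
Your argument is correct and is essentially the paper's proof. The paper likewise takes a unit whose logarithmic vector lies within the covering radius of $\log n\cdot(-(d-1),1,\ldots,1)\in H$ (small coordinate at index $1$ instead of $d$) and exponentiates coordinatewise; the abandoned first guess in your opening paragraph should simply be deleted.
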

\begin{proof} By Dirichlet's unit theorem~\ref{thm:DirichletUnit}, here with $
	\operatorname{Log}(\varepsilon) = \big(\log |\sigma_1(\varepsilon)|, \ldots, \log |
	\sigma_d(\varepsilon)| \big)
	$ and $H = \{(t_1,\ldots,t_d)\in \mathbb R^d \, | \, \sum_i t_i = 0\}$, there exists a constant $R > 0$ such that every element of $H$ lies within $R$-distance of $\operatorname{Log}(\mathcal O^\times)$. For $t > 0$, consider $v_t = (-(d-1)t, t, \ldots, t) \in H$. Choose units $\varepsilon_t$ such that $\| \operatorname{Log}(\varepsilon_t) - v_t\| \leq R$. Put $n = e^t$. Then 
	\[
	|\sigma_1(\varepsilon_t)| \asymp n^{-(d-1)}, \qquad |\sigma_i(\varepsilon_t)| \asymp n, \quad 2 \leq i \leq d.
	\]
	Therefore $\|\varepsilon_t \|= \max_i |\sigma_i(\varepsilon_t)| \asymp n$ and $\| \varepsilon_t^{-1} \| =   \big(\min_i |\sigma_i(\varepsilon_t)|\big)^{-1} \asymp n^{d-1}.$  
	Reindexing by $n$ proves the lemma.
\end{proof}

\begin{lemma}
	The full Lie ring $L \subset \mathfrak h(K)$ has $n^{d+1} \preceq \CL(n)$.
\end{lemma}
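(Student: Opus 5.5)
The plan is to exhibit, for each $n$, a pair of conjugate elements $u_n,v_n\in L$ with $\|u_n\|_{\mathrm{Gv}},\|v_n\|_{\mathrm{Gv}}\ll n$ such that \emph{every} conjugator has Guivarc'h length $\gg n^{d+1}$. The units $\varepsilon_n$ of Lemma~\ref{lemma:dirichlet} make $u_n$ small while forcing the conjugator to be large.

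First I write down the conjugacy equation in $L$. For $x=aX+bY+cZ$ and $u=pX+qY+rZ$ in $L$, the algebra is $2$-step, so $e^{\ad x}(u)=u+[x,u]=u+(aq-bp)Z$. Hence $u$ is conjugate to $v=u+wZ$ precisely when $aq-bp=w$ has a solution $a,b\in\mathcal O$, with $c$ arbitrary.

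Next I take the pair
\[
u_n:=\varepsilon_nX,\qquad v_n:=\varepsilon_nX+n^2Z .
\]
Here $q=0$ and $p=\varepsilon_n$, so the conjugacy equation reduces to $-b\varepsilon_n=n^2$. Its unique solution is $b=-n^2\varepsilon_n^{-1}$, which lies in $\mathcal O$ because $\varepsilon_n$ is a unit. Thus $x=-n^2\varepsilon_n^{-1}Y$ conjugates $u_n$ to $v_n$ in $L$. Since $\|n^2\|=n^2$, the sizes are
\[
\|u_n\|_{\mathrm{Gv}}=\|\varepsilon_n\|\asymp n,\qquad \|v_n\|_{\mathrm{Gv}}=\max\{\|\varepsilon_n\|,\,n\}\asymp n .
\]

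Now I bound every conjugator from below. Any conjugator $x=aX+bY+cZ$ must have $Y$-coefficient $b=-n^2\varepsilon_n^{-1}$, because $p=\varepsilon_n\neq0$ and $K$ is a field. Consequently
\[
\|x\|_{\mathrm{Gv}}\geq\|b\|=n^2\|\varepsilon_n^{-1}\|\asymp n^2\cdot n^{d-1}=n^{d+1}.
\]
It follows that $\CL_L(Cn)\geq \CL_L(u_n,v_n)\gg n^{d+1}$ for a constant $C$ with $\|u_n\|_{\mathrm{Gv}},\|v_n\|_{\mathrm{Gv}}\leq Cn$. Since $\CL_L$ is non-decreasing, the definition of $\preceq$ then gives $n^{d+1}\preceq\CL_L(n)$.

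There is no genuine obstacle beyond Lemma~\ref{lemma:dirichlet}, which supplies units that are small in every embedding but have large inverse. The point to check carefully is that the conjugator's $Y$-coefficient is completely forced, which is what rules out shorter conjugators. The only bookkeeping is the reindexing, because Lemma~\ref{lemma:dirichlet} gives $\|\varepsilon_n\|\asymp n$ only up to constants.
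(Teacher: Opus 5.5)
Your proposal is correct and is essentially the paper's own proof: the same pair $u_n=\varepsilon_nX$, $v_n=\varepsilon_nX+n^2Z$ built from the units of Lemma~\ref{lemma:dirichlet}, and the same observation that the conjugacy equation forces the $Y$-coefficient to equal $-n^2\varepsilon_n^{-1}$, which has norm $\asymp n^{d+1}$. Your final step, which uses monotonicity of $\CL_L$ to pass from $\CL_L(Cn)\gg n^{d+1}$ to $n^{d+1}\preceq\CL_L(n)$, is a harmless extra detail that the paper leaves implicit.
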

\begin{proof}
	Fix $n \geq 1$. Choose $\varepsilon_n \in \mathcal O^\times$ as in Lemma~\ref{lemma:dirichlet}. Let $\delta = n^2 \in \mathbb Z \subset \mathcal O$. We will show that the elements \[
	u_n = \varepsilon_n X, \qquad v_n = \varepsilon_n X + \delta Z
	\]
	have Guivarc'h length $\ll n$, that they are conjugate in $L$, and that every conjugator has length $\gg n^{d+1}$.
	
	\medskip 
	
	The first statement is straightforward: $\|\varepsilon_n\| \asymp n$ and $\|\delta \| = n^2$, so the Guivarc'h length of both elements is $\asymp n$. 
	
	Next, any conjugator $w = p X + qY + rZ \in L$ from $u_n$ to $v_n$ must satisfy 
	\[
	u_n + [w,u_n] = v_n \iff \varepsilon_n X - q \varepsilon_n Z = \varepsilon_n X + \delta Z \iff -q \varepsilon_n = \delta.
	\]
	Since $\varepsilon_n$ is a unit, this equation has a solution, e.g.\ $w = - \varepsilon_n^{-1} \delta \, Y $. Thus $u_n$ and $v_n$ are conjugate. Moreover, any such conjugator $w$ has Guivarc'h length $\gg n^{d+1}$ since its $Y$-coordinate must satisfy \[
	\|q\| = \|\delta \varepsilon_n^{-1} \| = |\delta| \cdot \| \varepsilon_n^{-1} \| \asymp n^2 \cdot n^{d-1} = n^{d+1}.
	\]
	This proves the lower bound.
\end{proof}

\section{Dense nilpotent conjugator length spectrum (Theorem~\ref{thm:denseCLspec})}

\noindent This section is dedicated to the proof of the following theorem.

\setcounter{letterthmbody}{0}

\begin{letterthmbody}\label{thmbody:denseCLspec}
	Let $\alpha \in [2,\infty) \cap \mathbb Q$. There exists a 2-step nilpotent rational Lie algebra $\mathfrak g_{\alpha}$ with $\CL(n) \simeq n^\alpha$. 
\end{letterthmbody}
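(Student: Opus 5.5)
The plan is to follow the template of \S\ref{sec:noQI}. First fix a rational Lie algebra $\mathfrak g_\alpha$ that is a Galois-twisted rational form of a 2-step graph Lie algebra, as in Theorem~\ref{thm:rationalforms-graphalgebras}. Then pass to a full Lie ring $L\subset\mathfrak g_\alpha$, which is allowed by Proposition~\ref{prop:full-liering-CL}. Finally, reduce conjugacy $e^{\ad w}(u)=v$ to the linear equation $[w,u]=v-u$ in the centre, so that $\CL_L(n)$ becomes the worst size of a shortest integral solution. In $\mathfrak h(K)$ the extremal configuration was $u=\varepsilon X$ with $\varepsilon$ a unit. There the answer $n^{2}\cdot\max\{\|\varepsilon^{-1}\|: \|\varepsilon\|\le n\}$ is governed by a linear optimisation over the log-unit lattice $\operatorname{Log}(\mathcal O^\times)\subset H$. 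For $\mathfrak h(K)$ the extremal direction $(-(d-1),1,\dots,1)$ gives the integer $d-1$. The idea is to build $\mathfrak g_\alpha$ so that the constraint "$\|u\|_{\mathrm{Gv}}\le n$" becomes a \emph{weighted} condition on the log-vector of $\varepsilon$. The maximum of the linear functional "$-\min_i\log|\sigma_i(\varepsilon)|$" is then a rational number that can be prescribed.

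\emph{Construction.} Write $\alpha=2+p/q$ with $p,q\ge 1$ integers; the case $\alpha=2$ is $\mathfrak h(\mathbb Q)$. Take a totally real field $K$ of degree $d=p+1$. Take a graph whose coherent components carry a $\operatorname{Gal}$-action, so that the rational form contains one generator $X$ over $K$ pairing with $Y$ over $K$ into a centre over $K$, as in $\mathfrak h(K)$. It should also contain auxiliary generators $X_1,\dots$ (in further orbits, commuting with $Y$) that force $u$ to carry the companion coordinate $\varepsilon^{q}X'$ whenever $u$ carries $\varepsilon X$. For example, use the abelian relations needed to keep the centralizer equation unchanged, and impose the pairing of $X'$ with a second $Y'$ so that a conjugator's $Y'$-coordinate is tied to the $Y$-coordinate. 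The target effect is the following. For elements of Guivarc'h length $\le n$ carrying a unit $\varepsilon$, the admissible region is $\{t\in H: \max_i t_i\le \tfrac1q\log n\text{-type weights}\}$, which is a polytope with rational vertices. The maximum of $-\min_i t_i$ over it equals $\tfrac pq\log n$ (achieved at $t=(-\tfrac pq,\tfrac1q,\dots,\tfrac1q)\log n$ after scaling by $q$).

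\emph{Lower bound.} Mimic the proof of the lower bound for $\mathfrak h(K)$. Use Lemma~\ref{lemma:dirichlet}'s argument (Dirichlet's unit theorem~\ref{thm:DirichletUnit}) to pick units $\varepsilon_n$ whose log-vector is within $R$ of the rational vertex above. Set $u_n$ to be the element built from $\varepsilon_n$ and $v_n=u_n+n^2Z$, and check $\|u_n\|_{\mathrm{Gv}},\|v_n\|_{\mathrm{Gv}}\ll n$. Then read off from the bracket equation that every conjugator has a coordinate equal to $n^2\varepsilon_n^{-1}$ (up to the forced companions), of norm $\asymp n^{2+p/q}$.

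\emph{Upper bound, the main obstacle.} Theorem~\ref{thm:BR26TwoStepUpperBound} only gives $n^{\dim Z+1}$, which is too weak. So I will analyse the linear system $[w,u]=v-u$ directly. For fixed $u$ the map $\ad u\colon \mathfrak g/\gamma_2\to Z$ is $\mathbb Q$-linear with coefficients of size $\ll\|u\|_{\mathrm{Gv}}$. Using the Galois structure, decompose it into blocks given by multiplication by elements of $K$ (and its conjugate fields). The key step is to show that a shortest integral solution has size $\ll n^2\cdot\|\varepsilon^{-1}\|$-type quantities. These are controlled by the norms of the coefficient elements together with the size of the lattice of solutions (the centralizer $\ker\ad u\cap L$), which is handled through Lemma~\ref{lemma:centralizer-cosets-uniform}-style covering arguments. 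Then bound $\|a^{-1}\|$ for non-units $a\in\mathcal O$ via $\|a^{-1}\|\le |N(a)|^{-1}\prod_{j\ne i}|\sigma_j(a)|$. Finally, use LP duality on the weighted constraints to show the exponent never exceeds $p/q$. I expect this step, ensuring that no non-unit or mixed configuration beats the unit vertex and that the auxiliary generators genuinely impose the weighted constraint on \emph{every} short $u$, to be the hardest. I would first try to prove it by reducing to the case where $u$'s relevant coordinate generates a principal ideal of bounded norm, via a Cramer's-rule bound.
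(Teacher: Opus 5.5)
Your proposal has a genuine gap already at the construction, and the upper bound is a plan rather than an argument.

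\textbf{The coupling you need cannot exist.} Your mechanism is auxiliary generators that ``force $u$ to carry the companion coordinate $\varepsilon^qX'$ whenever $u$ carries $\varepsilon X$''. Membership in $L$ is a $\mathbb Z$-linear condition, and $\CL_L(n)$ is a maximum over \emph{all} conjugate pairs in $L$. No rational form can tie one coordinate of $u$ to the $q$-th power of another: the only couplings it imposes are $\mathbb Q$-linear, such as Galois conjugation between summands, which leaves $\|\cdot\|$ unchanged.

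\textbf{Your design overshoots the target.} Suppose $\mathfrak g_\alpha$ contains $KX\oplus KY\oplus KZ$ with $[aX,bY]=abZ$, and $X$ has no other bracket partners; this is what keeping ``the centralizer equation unchanged'' amounts to. Then the argument of \S\ref{subsec:lowerboundhK} goes through verbatim for $u_n=\varepsilon_nX$ and $v_n=u_n+n^2Z$, with $\varepsilon_n$ from Lemma~\ref{lemma:dirichlet}. Whatever else $\mathfrak g_\alpha$ contains, every conjugator has $Y$-coordinate $-n^2\varepsilon_n^{-1}$. Hence $\CL(n)\succeq n^{d+1}=n^{p+2}$, which exceeds $n^{2+p/q}$ whenever $q\geq 2$. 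Your extremal units, with $\operatorname{Log}$-vector near $(-\tfrac pq,\tfrac1q,\dots,\tfrac1q)\log n$, have $\|\varepsilon_n\|\asymp n^{1/q}$. They sit deep inside the admissible ball, and nothing prevents choosing $\|\varepsilon\|\asymp n$ instead. To escape this, $X$ must have further bracket partners so that $[w,u]=\delta$ has a genuine affine family of solutions. But then your weighted-polytope heuristic no longer describes the problem. So neither the Lie algebra nor the linear program producing $p/q$ is actually specified, and the LP-duality upper bound has nothing concrete to act on.

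\textbf{The missing idea is where a non-integral exponent comes from.} In the paper it is not $\|\varepsilon^{-1}\|$ under a reweighted size constraint, but a ratio of combinatorial quantities. Write $\alpha=1+r/(2k)$ with $r\geq 2k+3$, and take a cyclic totally real field $K$ of degree $r$. Use the twisted form $\mathfrak g_{r,k}=KX\oplus\bigoplus_{s=1}^kKZ_s$ of the graph algebra of $C_r^k$, in which $X$ brackets with \emph{itself} via $a\gamma^s(b)-b\gamma^s(a)$. The $X$-coordinates of conjugators then form the coset $a+(\mathcal O\cap\mathbb Q b)$ (Lemma~\ref{lemma:kernel}). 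The exponent $r/(2k)$ is the number of embeddings divided by the vertex connectivity $2k$ of $C_r^k$.
\begin{itemize}
\item \emph{Lower bound.} Take a unit that is $\asymp n^{-c}$, with $c=(r-2k)/(2k)$, exactly at the $2k$ neighbours of $0$, and $\asymp n$ elsewhere.
\item \emph{Upper bound.} Combine the edge estimates $|\sigma_i(a)/\sigma_i(b)-\sigma_j(a)/\sigma_j(b)|\leq n^{x_i+x_j}$, the integrality bound $\sum_i x_i\leq r$ coming from $|N_{K/\mathbb Q}(b)|\geq 1$, a Menger-based weighted path lemma, and Helly's theorem for intervals.
\end{itemize}
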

Then Theorem~\ref{theorembody:commensurabilityinvariance}, Proposition~\ref{propbody:universalLowerBound} and Theorem~\ref{thmbody:denseCLspec} together immediately imply:

\begin{lettercorbody}
	The nilpotent conjugator length spectrum is dense in \(\{0\}\cup[2,\infty)\). This remains true after restricting to groups of nilpotency class \(\leq 2\). 
\end{lettercorbody}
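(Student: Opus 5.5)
The corollary follows once we know which exponents occur, and these come from Theorem~\ref{theorembody:commensurabilityinvariance}, Proposition~\ref{propbody:universalLowerBound} and Theorem~\ref{thmbody:denseCLspec}. The plan has two halves.

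\emph{Realized exponents.} Every rational $\alpha\geq 2$ is realized by a class-$2$ group. Given $\alpha\in[2,\infty)\cap\mathbb Q$, Theorem~\ref{thmbody:denseCLspec} supplies a $2$-step nilpotent rational Lie algebra $\mathfrak g_\alpha$ with $\CL_{\mathfrak g_\alpha}(n)\simeq n^\alpha$. Choose a full Lie ring $L\subset\mathfrak g_\alpha$ and a multiple $DL$ that is BCH-closed, and put $\Gamma=\exp(DL)$. This is a finitely generated torsion-free nilpotent group of class $2$. By Lemma~\ref{lemma:BCH-liering-group-CL} and Proposition~\ref{prop:full-liering-CL}, $\CL_\Gamma\simeq\CL_{DL}\simeq\CL_{\mathfrak g_\alpha}\simeq n^\alpha$.

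\emph{The exponent $0$.} Any finitely generated abelian group, for instance $\mathbb Z$, has $\CL\equiv 0$, so $0$ lies in the spectrum and is realized in class $1\leq 2$. Since $[2,\infty)\cap\mathbb Q$ is dense in $[2,\infty)$, the spectrum restricted to class $\leq 2$ is dense in $\{0\}\cup[2,\infty)$. The unrestricted spectrum contains this set, so it too is dense there, provided the whole spectrum lies inside $\{0\}\cup[2,\infty)$.

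\emph{Containment.} Let $G$ be any finitely generated nilpotent group with $\CL_G(n)\simeq n^\beta$.
\begin{itemize}
\item If the virtual class $c$ of $G$ is at least $2$, Proposition~\ref{propbody:universalLowerBound} gives $n^2\preceq n^c\preceq n^\beta$, hence $\beta\geq 2$.
\item If $G$ is virtually abelian, Theorem~\ref{theorembody:commensurabilityinvariance} shows $\CL_G\simeq\CL_{\mathbb Z^k}\equiv 0$. Note that $n^\beta\preceq 0$ means $n^\beta\leq C$ for all $n$, which forces $\beta=0$.
\end{itemize}

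The only delicate point is this convention: the case $\beta=0$ must be read as $\CL\simeq n^0\simeq 1$, which agrees with $0$ under $\preceq$ up to the additive constant. Everything else is a direct assembly of the cited results.
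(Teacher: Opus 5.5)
Your proposal is correct and follows the paper's own argument, which it states as an immediate consequence of Theorem~\ref{theorembody:commensurabilityinvariance}, Proposition~\ref{propbody:universalLowerBound} and Theorem~\ref{thmbody:denseCLspec}: rational $\alpha\geq 2$ come from Theorem~\ref{thmbody:denseCLspec} (passed to a class-$2$ group through a BCH-closed Lie ring), $0$ comes from abelian groups, and containment in $\{0\}\cup[2,\infty)$ comes from the universal lower bound together with commensurability invariance in the virtually abelian case. Your reading of the convention $0\simeq n^0$ under $\preceq$ is also correct.
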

\begin{remark}[Virtually Abelian case]
Theorem~\ref{theorembody:commensurabilityinvariance} and  Proposition~\ref{propbody:universalLowerBound} imply that a finitely generated nilpotent group $G$ has conjugator length function $\CL_G(n) \simeq 1$ if and only if $G$ is virtually Abelian. 
\end{remark}

\subsection{Proof of Theorem~\ref{thmbody:denseCLspec}}

Since the discrete Heisenberg group $H(\mathbb Z)$ has quadratic conjugator length function (see \cite[Thm.\ 4.9]{BRS26}), to prove Theorem~\ref{thmbody:denseCLspec}, it suffices to focus on the case $\alpha > 2$. 

Let $\alpha \in (2,\infty) \cap \mathbb Q$ and write $\alpha = 1 + {r}/{2k}$
for some fixed $r,k \geq 1$ with $r \geq 2k+3$. This can always be done: if $\alpha = 2 + a/b$ for some $a,b \geq 1$, take $r = 4(a+b)$ and $k = 2b$.

  We construct a 2-step nilpotent rational Lie algebra $\mathfrak g_{r,k}$ with $\CL(n) \simeq n^{1 + {r}/{2k}}$.  
Let $K$ be a totally real Galois extension of $\mathbb Q$ with \[\operatorname{Gal}(K/\mathbb Q) = \langle \gamma \rangle \cong \mathbb Z / r \mathbb Z.\] 
For every $r \geq 1$, such an extension exists by \cite[Lemma~5.4]{DW23}.

Define the $r(k+1)$-dimensional rational Lie algebra \[
\mathfrak{g}_{r,k} := K X \oplus \bigoplus_{s = 1}^k K Z_s,
\]
with all $K Z_s$ central and only non-trivial bracket relations \[
[aX, bX] = \sum_{s=1}^k \big( a \gamma^s(b) - b \gamma^s(a) \big) Z_s, \qquad a,b\in K. 
\]
The goal of this section is to prove that $\CL_{\mathfrak g_{r,k}}(n) \simeq n^{1 + r / (2k)}$.  For completeness, we first prove that $\mathfrak g_{r,k}$ is a rational form of a real graph Lie algebra.

\subsubsection{$\mathfrak g_{r,k}$ is a rational form of a graph algebra.} The relevant real Lie algebra is the 2-step nilpotent graph Lie algebra associated to $C_r^k$, the $k^\text{th}$ power of the $r$-cycle graph $C_r$. That is, take the $r$-cycle graph $C_r$ and add edges between those vertices whose distance along $C_r$ is $\leq k$. Equivalently, its set of vertices is $\mathbb Z / r \mathbb Z$ and vertex $i$ is connected to the vertices $i \pm 1, i \pm 2, \ldots, i \pm k$. 
Two examples of such graphs are shown below. 
\begin{center}
	\begin{tikzpicture}[
		x=0.6cm,
		y=0.6cm,
		line cap=round,
		line join=round
		]
		
		\tikzset{
			dot/.style={circle, fill=black, inner sep=2.2pt},
			every node/.style={font=\normalsize}
		}
		
		\begin{scope}[shift={(-4,0)}]
			
			\node at (0,3.1) {$C_7^2$};
			
			\foreach \i in {0,...,6}{
				\coordinate (v\i) at ({90-360*\i/7}:2.3);
			}
			
			\foreach \i in {0,...,6}{
				\pgfmathtruncatemacro{\j}{mod(\i+1,7)}
				\pgfmathtruncatemacro{\k}{mod(\i+2,7)}
				\draw[line width=0.8pt] (v\i) -- (v\j);
				\draw[line width=0.8pt] (v\i) -- (v\k);
			}
			
			\foreach \i in {0,...,6}{
				\node[dot] at (v\i) {};
			}
			
		\end{scope}
		
		\begin{scope}[shift={(4,0)}]
			
			\node at (0,3.1) {$C_8^2$};
			
			\foreach \i in {0,...,7}{
				\coordinate (w\i) at ({90-360*\i/8}:2.3);
			}
			
			\foreach \i in {0,...,7}{
				\pgfmathtruncatemacro{\j}{mod(\i+1,8)}
				\pgfmathtruncatemacro{\k}{mod(\i+2,8)}
				\draw[line width=0.8pt] (w\i) -- (w\j);
				\draw[line width=0.8pt] (w\i) -- (w\k);
			}
			
			\foreach \i in {0,...,7}{
				\node[dot] at (w\i) {};
			}
			
		\end{scope}
		
	\end{tikzpicture}
\end{center}

For $F \in \{K,\mathbb R\}$, we denote the 2-step nilpotent $F$-Lie algebra associated to $C_r^k$ by $\mathfrak n_{r,k}(F)$. It has basis $x_i, z_{i,s}$ for $i \in \mathbb Z/r\mathbb Z, 1 \leq s \leq k$ and nonzero brackets \[
[x_i,x_{i+s}] = z_{i,s}, \qquad [x_i,x_{i-s}] = -z_{i-s,s}. 
\]
For $r \geq 2k + 3$, the coherent components of $C_r^k$ are the singletons. Hence $\overline{C_r^k} = C_r^k$ and $\operatorname{Aut}\big( \overline{C_r^k} \big) = \operatorname{Aut}\big(C_r^k\big)$ in the notation of Deré--Witdouck (\S\ref{subsec:rationalforms-graphalgebras}).  
We now show that the Lie algebra $\mathfrak g_{r,k}$ is a Galois-twisted rational form of $\mathfrak n_{r,k}(\mathbb R)$.

\begin{lemma}
	The $\mathbb Q$-Lie algebra $\mathfrak g_{r,k}$ is the rational form of $\mathfrak n_{r,k}(\mathbb R)$ corresponding to the cyclic rotation action $ \operatorname{Gal}(K/\mathbb Q)\to \operatorname{Aut}\big( C_r^{k}\big)$ under the correspondence of Theorem~\ref{thm:rationalforms-graphalgebras}.
\end{lemma}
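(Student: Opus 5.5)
The plan is to write down the twisted fixed-point algebra $\mathfrak n_{\rho,2}(\mathbb Q)$ explicitly for the rotation action and match it with $\mathfrak g_{r,k}$ by a coefficient-twisting map. Let $\rho\colon \operatorname{Gal}(K/\mathbb Q)=\langle\gamma\rangle\to\operatorname{Aut}(C_r^k)$ send $\gamma$ to the rotation $i\mapsto i+1$ of $\mathbb Z/r\mathbb Z$. This action is faithful, since $\gamma^j$ acts trivially only if $r\mid j$, and $K$ is totally real Galois. Because $r\geq 2k+3$, the coherent components are singletons, so $\overline{C_r^k}=C_r^k$ and the splitting $r$ in Theorem~\ref{thm:rationalforms-graphalgebras} is the identity. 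The induced automorphism $i(\rho(\gamma))$ of $\mathfrak n_{r,k}(K)$ is therefore $x_i\mapsto x_{i+1}$, $z_{i,s}\mapsto z_{i+1,s}$. This is a Lie automorphism: it maps $z_{i,s}=[x_i,x_{i+s}]$ to $[x_{i+1},x_{i+1+s}]=z_{i+1,s}$.

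Next I determine the fixed points. Write $v=\sum_i a_i x_i+\sum_{i,s}c_{i,s}z_{i,s}$ with $a_i,c_{i,s}\in K$. Then $i(\rho(\gamma))(\gamma v)=\sum_i\gamma(a_i)x_{i+1}+\sum_{i,s}\gamma(c_{i,s})z_{i+1,s}$. Since $\gamma$ generates the Galois group, $v$ lies in $\mathfrak n_{\rho,2}(\mathbb Q)$ if and only if it is fixed by this one operator. Comparing coefficients, this happens exactly when
\[
a_{i+1}=\gamma(a_i), \qquad c_{i+1,s}=\gamma(c_{i,s}).
\]
Hence $a_i=\gamma^i(a_0)$ and $c_{i,s}=\gamma^i(c_{0,s})$. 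This is consistent around the cycle because $\gamma^r=1$, and $a_0,c_{0,s}\in K$ can be chosen freely. (If the paper's convention instead gives $a_{i}=\gamma(a_{i+1})$, one replaces $\gamma$ by $\gamma^{-1}$ below, with no other change.)

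This suggests the $\mathbb Q$-linear map
\[
\varphi\colon \mathfrak g_{r,k}\to\mathfrak n_{\rho,2}(\mathbb Q),\qquad aX\mapsto \sum_{i\in\mathbb Z/r\mathbb Z}\gamma^i(a)\,x_i,\qquad cZ_s\mapsto\sum_{i}\gamma^i(c)\,z_{i,s}.
\]
By the previous paragraph $\varphi$ is a bijection. Injectivity holds because $a$ and $c$ are recovered as the $i=0$ coefficients, and surjectivity holds because every fixed point has this form. The image indeed lies in the fixed-point algebra.

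It remains to check that $\varphi$ preserves brackets, which is the only computation. Since $r\geq 2k+1$, the residues $\pm1,\ldots,\pm k$ are distinct modulo $r$. So for each pair $i\neq j$ at most one of $j=i+s$ or $i=j+s$ holds with $1\le s\le k$, and the $z_{i,s}$ form a basis. Then
\[
\Bigl[\sum_i\gamma^i(a)x_i,\sum_j\gamma^j(b)x_j\Bigr]=\sum_{s=1}^k\sum_i\bigl(\gamma^i(a)\gamma^{i+s}(b)-\gamma^{i+s}(a)\gamma^i(b)\bigr)z_{i,s}.
\]
Because $\gamma^i$ is a ring homomorphism, this equals $\sum_s\sum_i\gamma^i\bigl(a\gamma^s(b)-b\gamma^s(a)\bigr)z_{i,s}=\varphi([aX,bX])$. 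Brackets involving the central elements $Z_s$ vanish on both sides.

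The main obstacle is bookkeeping rather than depth: one must fix the sign and direction conventions for the twisted action, and check that no $z_{i,s}$ is double counted. The second point is exactly where $r\geq 2k+1$ is used, and it is subsumed by the standing hypothesis $r\geq2k+3$.
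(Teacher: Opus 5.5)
Your proposal is correct and follows essentially the paper's proof: you use the same map (the paper's $\Theta$) and the same bracket computation. The only differences are that you get bijectivity by solving the fixed-point equations $a_{i+1}=\gamma(a_i)$, $c_{i+1,s}=\gamma(c_{i,s})$ directly, whereas the paper combines injectivity with a dimension count from Galois descent, and that you spell out the faithfulness of the rotation action and the role of $r\geq 2k+1$ in ensuring no $z_{i,s}$ is double counted.
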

\begin{proof}
	The cyclic rotation action gives a semilinear Galois action on $\mathfrak n_{r,k}(K)$ defined by \[
	\gamma(\lambda x_i) = \gamma(\lambda) x_{i+1}, \qquad \gamma(\lambda z_{i,s}) = \gamma(\lambda) z_{{i+1},s}. 
	\] 
	This is compatible with the bracket. 
	
	We prove that the corresponding fixed-point $\mathbb Q$-Lie algebra \[
	\left( \mathfrak n_{r,k}(K) \right)^{\operatorname{Gal}(K / \mathbb Q)} = \big\{ u \in  \mathfrak n_{r,k}(K) \: \big| \: \gamma(u) = u \big\}
	\]
	is exactly $\mathfrak g_{r,k}$. This suffices: indeed, by the standard Galois descent theorem for Lie algebras, recalled, for example, in \cite[\S3.1]{DW23}, the fixed-point Lie algebra of a semilinear Galois action on a finite-dimensional $K$-Lie algebra is a $\mathbb Q$-form of the original algebra. It then follows that $
	\mathfrak g_{r,k} \otimes_{\mathbb Q} \mathbb R \cong \mathfrak n_{r,k}(K) \otimes_K \mathbb R \cong \mathfrak n_{r,k}(\mathbb R).
	$
	
	We now show $\mathfrak g_{r,k} \cong \left( \mathfrak n_{r,k}(K) \right)^{\operatorname{Gal}(K / \mathbb Q)}$. Define a \(\mathbb Q\)-linear map \[
\Theta\colon \mathfrak g_{r,k} \to  \left( \mathfrak n_{r,k}(K) \right)^{\operatorname{Gal}(K / \mathbb Q)}, \quad	\Theta(aX) = \sum_{i \in \mathbb Z / r \mathbb Z} \gamma^i(a) x_i, \quad \Theta(cZ_s) = \sum_{i \in \mathbb Z / r \mathbb Z} \gamma^i(c) z_{i,s}. 
	\]
	This map is well-defined: it is straightforward to verify that $\gamma \big( \Theta(aX) \big) =  \Theta(aX)$ and $\gamma \big( \Theta(c Z_s) \big) = \Theta(c Z_s).$ 
	We show that $\Theta$ is an isomorphism of $\mathbb Q$-Lie algebras. First, it is compatible with the brackets: since $Z_s \in Z(\mathfrak g_{r,k})$ and $z_{i,s} \in Z\big(\mathfrak n_{r,k}(K)\big)$, it suffices to check that 
	\begin{equation}
		\Theta\big([aX, bX]\big) = \big[\Theta(aX), \Theta(bX)\big], \qquad a,b \in K. \label{eq:compatibleWithTheBracket}
	\end{equation}
	The left-hand side is
	\[
	\Theta\big([aX, bX]\big) = \Theta \left( \sum_{s=1}^k \big( a \gamma^s(b) - b \gamma^s(a) \big) Z_s  \right) =  \sum_i \sum_{s=1}^k \big( \gamma^i(a) \gamma^{i+s}(b) - \gamma^i(b) \gamma^{i+s}(a) \big) z_{i,s}, 
	\]
	while the right-hand side is 
\begin{align*}
	\big[\Theta(aX),\Theta(bX)\big]
	&=
	\left[
	\sum_i\gamma^i(a)x_i,
	\sum_j\gamma^j(b)x_j
	\right]
	\\
	&=
	\sum_i\sum_{s=1}^k
	\Big(
	[\gamma^i(a)x_i,\gamma^{i+s}(b)x_{i+s}]
	+
	[\gamma^{i+s}(a)x_{i+s},\gamma^i(b)x_i]
	\Big).
\end{align*}
	Thus \eqref{eq:compatibleWithTheBracket} holds. 
	
	It remains to show $\Theta$ is bijective. Since $\left( \mathfrak n_{r,k}(K) \right)^{\operatorname{Gal}(K / \mathbb Q)}$ is a $\mathbb Q$-form of the $K$-Lie algebra $\mathfrak n_{r,k}(K)$, and $\dim_K \mathfrak n_{r,k}(K) = \dim_{\mathbb Q} \mathfrak g_{r,k} = r(k+1)$, it follows that $\mathfrak g_{r,k}$ and $\left( \mathfrak n_{r,k}(K) \right)^{\operatorname{Gal}(K / \mathbb Q)}$ have the same dimension. It therefore suffices to show $\Theta$ is injective. 
	
	Let $aX + \sum_{s = 1}^k c_s Z_s \in \operatorname{Ker}(\Theta)$. Then \[\Theta \left( aX + \sum_{s = 1}^k c_s Z_s \right) = \sum_{i } \gamma^i(a) x_i + \sum_{s = 1}^k \sum_{i } \gamma^i(c_s) z_{i,s} = 0.
	\]
	The coefficient of $x_0$ is $a$, so $a = 0$. Similarly, the coefficient of $z_{0,s}$ is $c_s$, so $c_s = 0$. Hence the kernel is zero. This proves the lemma.
\end{proof}

%

\subsubsection{Lie ring $L\subset \mathfrak g_{r,k}$ and norms.}
\label{subsec:Lieringandnorms}

To prove that $\CL_{\mathfrak g_{r,k}}(n) \simeq n^{1 + r/(2k)}$, we work in the class-2 full Lie ring \[
L := \mathcal O X \oplus \bigoplus_{s = 1}^k \mathcal O Z_s \subset \mathfrak g_{r,k},
\]
where $\mathcal O$ denotes the ring of integers of $K$. 

Notice that the $\sigma_i :=  \gamma^i$ for $i\in \mathbb Z/r\mathbb Z$ define the distinct real embeddings $K \hookrightarrow \mathbb R$ and define the Archimedean supremum norm \[
\| x \| = \max_i | \sigma_i(x) |, \qquad x \in K. 
\]
On $L$, use the Guivarc'h norm \[
\left\| aX + \sum_{s = 1}^k c_s Z_s \right\|_{\mathrm{Gv}} = \max \left\{\| a\| , \|c_1\|^{1/2}, \ldots, \|c_k\|^{1/2}\right\}.
\]

\subsubsection{Conjugacy equation and conjugator length function.}\label{subsec:Conjugacyequationandconjugatorlengthfunction} Let 
\[
u = bX + \sum_{s = 1}^k c_s Z_s, \quad v = bX + \sum_{s = 1}^k c_s Z_s + \delta \in L,
\] where $\delta = \sum_{s = 1}^k \delta_s Z_s.$ Then $u$ and $v$ are conjugate if and only if there exists $a \in \mathcal O$ such that \[
u + [aX, u] = v \iff \sum_{s=1}^k \big(a\gamma^s(b) - b \gamma^s(a) \big) Z_s = \delta \iff \: \forall 1 \leq s \leq k\colon \:\: a \gamma^s(b) - b \gamma^s(a) = \delta_s.
\]
Let $B_b\colon \mathcal O  \to  \mathcal O^k $ be the $\mathbb Z$-linear map defined by \[
B_b(a) = \big(a \gamma(b) - b \gamma(a), a \gamma^2(b) - b \gamma^2(a) , \ldots,a \gamma^k(b) - b \gamma^k(a) \big). 
\]
The conjugacy equation becomes $B_b(a) = \delta$ and conjugator length can be rephrased as follows:  
\begin{equation}
	\CL_{L}(n) \simeq \max_{\substack{\|b\|\leq n\\
			\delta\in B_b(\mathcal O)\\
			\|\delta\|\leq n^2}}
	\min\big\{\|a\|: B_b(a)=\delta \big\}, \label{eq:equivalentCL}
\end{equation} 
where $\mathcal O^k$ is equipped with the sup-norm $\|(\delta_1,\ldots,\delta_k)\| = \max_s \| \delta_s\|$. 

For $b\neq0$, the homogeneous conjugacy equation has a
rank-1 $\mathbb Z$-module of solutions:
\begin{lemma}\label{lemma:kernel}
	$\operatorname{Ker} B_b = \mathcal O \cap b \mathbb Q$ for $b \neq 0$.
\end{lemma}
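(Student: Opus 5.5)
The plan is to prove both inclusions directly, using only the first coordinate of $B_b$ (the $s=1$ equation) for the hard inclusion.

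\emph{Inclusion $\supseteq$.} Let $a = qb \in \mathcal O$ with $q \in \mathbb Q$. Each $\gamma^s$ fixes $\mathbb Q$, so $\gamma^s(a) = q\gamma^s(b)$. Then
\[
a\gamma^s(b) - b\gamma^s(a) = qb\gamma^s(b) - bq\gamma^s(b) = 0
\]
for every $s$, hence $a \in \operatorname{Ker} B_b$.

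\emph{Inclusion $\subseteq$.} Let $a \in \operatorname{Ker} B_b$, and set $t := a/b \in K$, which is allowed since $b \neq 0$. The $s=1$ component gives $a\gamma(b) = b\gamma(a)$. Dividing by $b\gamma(b)$, which is nonzero because $\gamma$ is a field automorphism, yields
\[
t = \frac{a}{b} = \frac{\gamma(a)}{\gamma(b)} = \gamma(t).
\]
So $t$ is fixed by $\gamma$. Since $\gamma$ generates $\operatorname{Gal}(K/\mathbb Q)$, the fixed field of $\langle\gamma\rangle$ is $\mathbb Q$, and therefore $t \in \mathbb Q$. This gives $a = tb \in b\mathbb Q$, and $a \in \mathcal O$ by assumption, so $a \in \mathcal O \cap b\mathbb Q$.

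The argument has essentially no obstacle. The only points to flag are that $k \geq 1$, so the $s=1$ equation is actually present, and that $\gamma$ generating the whole Galois group is exactly what forces the fixed field to be $\mathbb Q$. To justify the phrase ``rank-1 $\mathbb Z$-module'', note that $\mathcal O \cap b\mathbb Q$ is a nonzero subgroup of the one-dimensional $\mathbb Q$-space $b\mathbb Q$: it is nonzero because it contains $b$. It is also discrete in $K \otimes \mathbb R$, being contained in the lattice $\mathcal O$. It is therefore infinite cyclic.
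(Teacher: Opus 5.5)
Your proof is correct and follows the paper's argument: the reverse inclusion by direct computation, and the forward inclusion from the $s=1$ equation alone, which gives $\gamma(a/b)=a/b$, together with the fact that the fixed field of the generator $\gamma$ is $\mathbb Q$. Your closing justification that $\mathcal O\cap b\mathbb Q$ is infinite cyclic is also valid; the paper only asserts this.
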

\begin{proof}
	The $\supset$-inclusion is straightforward. For the $\subset$-inclusion, let $a \in \mathcal O$ with $B_b(a) = 0$. By the first equation, $a \gamma(b) - b \gamma(a) = 0$, so $\gamma(a/b) = a/b$ in $K$. 
	Since $\gamma$ generates $\operatorname{Gal}(K/\mathbb Q)$, its fixed field is $\mathbb Q$. Hence $a/b \in \mathbb Q$. This completes the proof.
\end{proof}

\subsubsection{Lower bound $n^{1 + r/(2k)} \preceq \CL_L(n) $.} \label{subsec:lowerbounddense} Fix $c = (r-2k)/(2k) > 0$. Then $2 + c = 1 + r/(2k)$. In this subsection, we focus on proving the target lower bound $n^{2 + c}$.

As in the lower-bound argument for $\mathfrak h(K)$ in \S\ref{subsec:lowerboundhK}, we use Dirichlet's unit theorem to construct a sequence of units $b_n$ with prescribed Archimedean sizes. From this we obtain a central displacement $\delta_n$ with norm $\ll n^2$, such that the conjugacy equation $B_{b_n}(a) = \delta_n$ has only solutions of norm $\succeq n^{2+c}$.

\begin{lemma}
	For $n \geq 1$, we construct elements $a_n \in \mathcal O, b_n \in \mathcal O^\times$ such that:
	\begin{enumerate}
		\item There exist  $m,M > 0$ independent of $n$ such that \begin{align*}
			&mn^{-c}  \leq 	|\sigma_i(b_n)| \leq M n^{-c} \quad \text{for $i \in \{\pm 1, \ldots, \pm k\}$}, \\
			&mn \leq |\sigma_i(b_n)| \leq M n \quad \: \: \text{for all other $i$}.
		\end{align*}
		\item 	$
		|\sigma_0(a_n)| \ll 1, \quad \big| \sigma_i(a_n) - \sigma_i(b_n)n^{1+c} \big| \ll 1 \quad$ for $i \neq 0$. 
	\end{enumerate}
\end{lemma}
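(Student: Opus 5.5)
Both elements come from the Minkowski embedding of $\mathcal O$. Part (1) follows from Dirichlet's unit theorem (Theorem~\ref{thm:DirichletUnit}), applied exactly as in Lemma~\ref{lemma:dirichlet} but with a different target vector in $H$. Part (2) follows from the fact that $\mathcal O$ is a full lattice in $\mathbb R^r$, so it has bounded covering radius.

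For (1), use the real embeddings $\sigma_i=\gamma^i$, $i\in\mathbb Z/r\mathbb Z$, and define for $t>0$ the vector $w_t\in\mathbb R^r$ by
\[
(w_t)_i=-ct \quad\text{for } i\in\{\pm1,\ldots,\pm k\},\qquad (w_t)_i=t \quad\text{otherwise}.
\]
The first set of indices has exactly $2k$ elements, since $r\geq 2k+3$ makes $\pm1,\ldots,\pm k$ pairwise distinct and nonzero modulo $r$. The remaining $r-2k$ indices include $i=0$. The coordinates of $w_t$ therefore sum to
\[
-2kct+(r-2k)t=-(r-2k)t+(r-2k)t=0,
\]
using $c=(r-2k)/(2k)$, so $w_t\in H$. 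By Theorem~\ref{thm:DirichletUnit} there is $R>0$, independent of $t$, and a unit $b\in\mathcal O^\times$ with $\|\operatorname{Log}(b)-w_t\|_\infty\leq R$. Put $t=\log n$ and let $b_n$ be this unit. Then $e^{-R}n^{-c}\leq|\sigma_i(b_n)|\leq e^{R}n^{-c}$ for $i\in\{\pm1,\ldots,\pm k\}$, and $e^{-R}n\leq|\sigma_i(b_n)|\leq e^Rn$ for all other $i$. This gives (1) with $m=e^{-R}$ and $M=e^R$.

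For (2), the Minkowski embedding $\iota=(\sigma_i)_{i\in\mathbb Z/r\mathbb Z}\colon K\to\mathbb R^r$ maps $\mathcal O$ onto a full lattice in $\mathbb R^r$. Hence there is $\rho>0$ such that every point of $\mathbb R^r$ lies within sup-distance $\rho$ of $\iota(\mathcal O)$. Consider the real target vector $y_n\in\mathbb R^r$ with
\[
(y_n)_0=0,\qquad (y_n)_i=\sigma_i(b_n)\,n^{1+c}\quad (i\neq0).
\]
Here $n^{1+c}$ is merely a real number, and $y_n$ is not required to be the image of an element of $K$. Choose $a_n\in\mathcal O$ with $\|\iota(a_n)-y_n\|_\infty\leq\rho$. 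Then $|\sigma_0(a_n)|\leq\rho$ and $|\sigma_i(a_n)-\sigma_i(b_n)n^{1+c}|\leq\rho$ for $i\neq0$, which is (2) with implied constant $\rho$ depending only on $K$.

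There is no serious obstacle. The one point to check is the exponent bookkeeping that puts $w_t$ in the trace-zero hyperplane $H$, and this is exactly where the choice $c=(r-2k)/(2k)$ and the distinctness of $\pm1,\ldots,\pm k$ modulo $r$ are used. The constants $m$, $M$ and $\rho$ depend only on $K$, namely on its regulator lattice and on the covering radius of $\iota(\mathcal O)$, so they are uniform in $n$ as required.
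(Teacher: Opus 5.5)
Your proposal is correct and follows essentially the same route as the paper: Dirichlet's unit theorem applied to the trace-zero vector with entries $-c\log n$ on $\{\pm1,\ldots,\pm k\}$ and $\log n$ elsewhere gives $b_n$, and the bounded covering radius of the Minkowski lattice $\iota(\mathcal O)\subset\mathbb R^r$ applied to the target $(0,\sigma_i(b_n)n^{1+c})_{i\neq 0}$ gives $a_n$. Your explicit check that $\pm1,\ldots,\pm k$ are $2k$ distinct nonzero residues modulo $r$ (using $r\geq 2k+3$) and your explicit constants $m=e^{-R}$, $M=e^{R}$ make precise what the paper leaves implicit.
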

\begin{proof}
 By Dirichlet's unit theorem~\ref{thm:DirichletUnit}, there exists \(R>0\) such that every element of \(H\) lies within \(R\)-distance of \(\operatorname{Log}(\mathcal O^\times)\). For \(n\geq1\), take the vector \(v_n\in\mathbb R^r\) defined by \[
 (v_n)_i = - c \log n \quad \text{for $i \in \{\pm1, \ldots, \pm k\}$} \quad \text{and} \quad (v_n)_i = \log n \quad \text{for $i \notin \{\pm1, \ldots, \pm k\}$}.
 \]
	Since \(2k(-c)+(r-2k)=0\), we have \(v_n\in H\). Choose \(b_n\in\mathcal O^\times\) such that
	\(\|\operatorname{Log}(b_n)-v_n\|\leq R\). Exponentiating the coordinatewise estimates gives 
	\[
	|\sigma_i(b_n)|\asymp n^{-c}\quad \text{for }i\in \{\pm1,\ldots,\pm k\} \quad \text{and} \quad 
	|\sigma_i(b_n)|\asymp n\quad \text{for } i\notin \{\pm1,\ldots,\pm k\}.
	\]
	
	For~(2), we use that the Minkowski image of \(\mathcal O\) is a lattice in \(\mathbb R^r\):  there exists \(R'>0\) such that every point of \(\mathbb R^r\) lies within \(R'\)-distance of this lattice \cite[Ch.~1 \S5]{Neu99}. Apply this to the vector \(w_n\in\mathbb R^r\) defined by \((w_n)_0=0\) and
	\((w_n)_i=\sigma_i(b_n)n^{1+c}\) for \(i\neq0\). We obtain \(a_n\in\mathcal O\) such that
	\(\|(\sigma_i(a_n))_i-w_n\|\leq R'\). Consequently,
	\[
	|\sigma_0(a_n)|\ll1 \quad \text{and} \quad 
	\big|\sigma_i(a_n)- \sigma_i(b_n) n^{1+c} \big|\ll1 \quad
	\text{for }i\neq0. \qedhere 
	\]
\end{proof}

Define $\delta_n = B_{b_n}(a_n)$. Since $\|b_n\| \ll n$, Eq.\ \eqref{eq:equivalentCL} shows that it suffices to prove $\|\delta_n\| \ll n^2$ and that every solution of $B_{b_n}(a) = \delta_n$ satisfies $n^{2+c} \ll \|a\|$ for sufficiently large $n$. 

\begin{lemma}\label{lemma:boundDelta}
	$\| \delta_n\| \ll n^2$.
\end{lemma}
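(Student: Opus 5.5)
We bound each coordinate $\delta_{n,s}=a_n\gamma^s(b_n)-b_n\gamma^s(a_n)$, for $1\le s\le k$, at every embedding $\sigma_i$, $i\in\mathbb Z/r\mathbb Z$. Since $\sigma_i=\gamma^i$, we have
\[
\sigma_i(\delta_{n,s})=\sigma_i(a_n)\sigma_{i+s}(b_n)-\sigma_i(b_n)\sigma_{i+s}(a_n).
\]
The idea is that $a_n$ is, up to bounded error, equal to $n^{1+c}b_n$ at every embedding except $\sigma_0$. Whenever neither index $i$ nor $i+s$ equals $0$, the main terms cancel. For $j\neq 0$ write $\sigma_j(a_n)=n^{1+c}\sigma_j(b_n)+e_j$ with $|e_j|\ll1$. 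Then
\[
\sigma_i(\delta_{n,s})=e_i\sigma_{i+s}(b_n)-e_{i+s}\sigma_i(b_n),
\]
which is $\ll \max_j|\sigma_j(b_n)|\ll n$. Since $c>0$, this is certainly $\ll n^2$.

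It remains to treat the two exceptional cases, $i=0$ and $i+s=0$.

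\emph{Case $i=0$.} Here
\[
\sigma_0(\delta_{n,s})=\sigma_0(a_n)\sigma_s(b_n)-\sigma_0(b_n)\sigma_s(a_n).
\]
The first term is $\ll1\cdot n^{-c}$, because $s\in\{1,\dots,k\}$. For the second term, $|\sigma_0(b_n)|\asymp n$ since $0\notin\{\pm1,\dots,\pm k\}$, while
\[
|\sigma_s(a_n)|\le n^{1+c}|\sigma_s(b_n)|+O(1)\ll n^{1+c}n^{-c}=n.
\]
So this term is $\ll n\cdot n=n^2$.

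\emph{Case $i=-s$.} Symmetrically,
\[
\sigma_{-s}(\delta_{n,s})=\sigma_{-s}(a_n)\sigma_0(b_n)-\sigma_{-s}(b_n)\sigma_0(a_n).
\]
Since $-s\in\{\pm1,\dots,\pm k\}$, we get $|\sigma_{-s}(a_n)|\ll n$, which together with $|\sigma_0(b_n)|\asymp n$ gives $\ll n^2$ for the first term. The second term is $\ll n^{-c}$.

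Taking the maximum over $i$ and $s$ gives $\|\delta_n\|\ll n^2$.

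The only delicate point is the exceptional embeddings. There the cancellation fails, and one must use that the embeddings $\sigma_{\pm s}$ adjacent to $0$ are exactly those where $b_n$ is small ($\asymp n^{-c}$). This is what makes $\sigma_{\pm s}(a_n)$ of size only $n$ rather than $n^{2+c}$. This choice of the set $\{\pm1,\dots,\pm k\}$ in the construction is what the proof relies on.
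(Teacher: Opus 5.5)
Your proof is correct and follows the paper's argument essentially step for step. You expand $\sigma_i(\delta_{n,s})=\sigma_i(a_n)\sigma_{i+s}(b_n)-\sigma_i(b_n)\sigma_{i+s}(a_n)$, cancel the main terms when neither $i$ nor $i+s$ is $0$, and estimate directly at the exceptional embeddings, where $\sigma_{\pm s}(b_n)\asymp n^{-c}$ keeps $\sigma_{\pm s}(a_n)\ll n$. You also write out the case $i+s=0$ that the paper only calls analogous.
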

\begin{proof}
	For $1 \leq s \leq k$, the $s$-component of $\delta_n = B_{b_n}(a_n)$ is $a_n \gamma^s(b_n) - b_n \gamma^s(a_n)$.  Thus, \[
	\big|\sigma_i\big(a_n \gamma^s(b_n) - b_n \gamma^s(a_n)\big) \big| = \big|\sigma_i(a_n) \sigma_{i+s}(b_n) - \sigma_{i}(b_n) \sigma_{i+s}(a_n)\big|. 
	\]
	It suffices to show this quantity has size $\ll n^2$ for all $i \in \mathbb Z/ r\mathbb Z, 1 \leq s \leq k$. 
	
	If neither $i$ nor $i+s$ equals $0$, then by construction of $a_n$, 
	\begin{align*}
		\big|\sigma_i(a_n) &\sigma_{i+s}(b_n) - \sigma_{i}(b_n) \sigma_{i+s}(a_n)\big| \\ &= \big| \big(\sigma_{i}(b_n)n^{1+c} + O(1)\big) \cdot \sigma_{i+s}(b_n) - \sigma_{i}(b_n) \cdot \big(\sigma_{i+s}(b_n)n^{1+c} + O(1)\big) \big| \\ &\ll |\sigma_{i+s}(b_n)| + |\sigma_{i}(b_n)| \ll n. 
	\end{align*}
	
	If $i = 0$, then  $
	\big|\sigma_0(a_n) \sigma_{s}(b_n) - \sigma_{0}(b_n) \sigma_{s}(a_n)\big|  \ll 1 \cdot n^{-c} +  n \cdot n^{-c} \cdot n^{c+1} \ll n^2.
	$ 
	The case $i + s = 0$ is analogous. This proves the lemma. 
\end{proof}

The following lemma completes the proof of the lower bound $n^{1 + r/(2k)} \preceq \CL_L(n) $. 

\begin{lemma}\label{lemma:boundA}
There exists $n_0 \geq 1$ such that for every $n \geq n_0$ and every $a \in \mathcal O$ with $B_{b_n}(a) = \delta_n$ it holds that $n^{2+c} \ll \|a\| $.
\end{lemma}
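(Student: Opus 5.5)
By Lemma~\ref{lemma:kernel}, the solutions of $B_{b_n}(a)=\delta_n$ form the coset $a_n+\operatorname{Ker}B_{b_n}$, where $\operatorname{Ker}B_{b_n}=\mathcal O\cap b_n\mathbb Q$. The plan is to first make this kernel explicit, and then to estimate two well-chosen Archimedean coordinates of a general solution. The lower bound will come from comparing the embedding $\sigma_0$ with one other "large" embedding.

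\emph{Step 1: describe all solutions.} Since $b_n$ is a unit, $a\in \mathcal O\cap b_n\mathbb Q$ means $a/b_n\in \mathcal O\cap\mathbb Q=\mathbb Z$. Hence $\operatorname{Ker}B_{b_n}=\mathbb Z b_n$. Every solution therefore has the form
\[
a=a_n+m\,b_n,\qquad m\in\mathbb Z.
\]

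\emph{Step 2: two coordinate estimates.} Index $0$ is not in $\{\pm1,\ldots,\pm k\}$, so $|\sigma_0(b_n)|\asymp n$. Since $r\geq 2k+3$, there is a second index outside $\{0,\pm1,\ldots,\pm k\}$, say $i_*=k+1$, and also $|\sigma_{i_*}(b_n)|\asymp n$. Using property~(2) of the construction of $a_n$, we get
\[
\sigma_0(a)=m\,\sigma_0(b_n)+O(1),\qquad
\sigma_{i_*}(a)=\sigma_{i_*}(b_n)\bigl(n^{1+c}+m\bigr)+O(1).
\]
Consequently
\[
\|a\|\;\geq\; \max\bigl(|\sigma_0(a)|,|\sigma_{i_*}(a)|\bigr)\;\gg\; n\max\bigl(|m|,\,|n^{1+c}+m|\bigr)-O(1).
\]

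\emph{Step 3: conclude.} The triangle inequality gives $\max(|m|,|n^{1+c}+m|)\geq n^{1+c}/2$, so $\|a\|\gg n^{2+c}-O(1)$. Choosing $n_0$ large enough to absorb the additive constant proves the claim. Here $m$ is an integer, whereas $n^{1+c}$ need not be; the argument only uses the real triangle inequality, so this causes no difficulty.

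\emph{Main obstacle.} The only delicate point is the choice of the second index. We need a coordinate where $b_n$ is large, of size $\asymp n$, and where $a_n$ tracks $b_n n^{1+c}$. The condition $r\geq 2k+3$ is exactly what guarantees that such an index exists besides $0$. Using an index in $\{\pm1,\ldots,\pm k\}$ instead would only give the weaker bound $n^{-c}\cdot n^{1+c}=n$ for that coordinate.
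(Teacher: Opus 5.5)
Your proof is correct and follows essentially the same route as the paper: write a solution as $a_n$ plus a kernel element, then compare the $\sigma_0$- and $\sigma_{k+1}$-coordinates via $\max\{|X|,|Y|\}\geq\frac12|X-Y|$. The only differences are cosmetic. You sharpen the kernel to $\mathbb Z b_n$ using that $b_n$ is a unit, whereas the paper keeps $q\in\mathbb Q$, which suffices. You also absorb the $O(1)$ errors additively instead of factoring out $\sigma_i(b_n)$.
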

\begin{proof}
	Since $B_{b_n}(a - a_n) = \delta_n - \delta_n = 0$, Lemma \ref{lemma:kernel} implies that $a = a_n + q b_n$ for some $q \in \mathbb Q$. 	
	To prove $\|a\| = \max_i |\sigma_i(a)| \gg n^{2+c}$, we bound $|\sigma_i(a_n + q b_n)|$ for $i = 0$ and $i = j$, where $j$ is any non-neighbor of $0$, for instance $j = k+1$.

	By construction of $a_n$, 
	\[
	|\sigma_0(a_n)| \leq C, \qquad \sigma_j(a_n) = \sigma_j(b_n)n^{1+c} + R_{j,n} \quad \text{for $|R_{j,n}| \leq C$},	
	\]
	for some $C \geq 1$ independent of $n$. 
	
	First look at the $0$-vertex:
	\[
	\left|	\sigma_0(a_n + q b_n) \right| = \left| \sigma_0(b_n) \right| \left| q + \frac{\sigma_0(a_n)}{\sigma_0(b_n)} \right| \geq mn \left| q + \frac{\sigma_0(a_n)}{\sigma_0(b_n)} \right|.
	\]
	Similarly, at the $j$-vertex:
	\[
	\left|	\sigma_j(a_n + q b_n) \right| =\left| \sigma_j(b_n)(q+ n^{1+c}) + R_{j,n} \right|  \geq mn \left| q + n^{1+c} + \frac{R_{j,n}}{\sigma_j(b_n)} \right|.
	\]
	Now we compare the two quantities on the right-hand side and bound their difference:
\[
		\left|q + \frac{\sigma_0(a_n)}{\sigma_0(b_n)} - \left( q + n^{1+c} + \frac{R_{j,n}}{\sigma_j(b_n)} \right) \right| = \left| n^{1+c}  + \frac{R_{j,n}}{\sigma_j(b_n)} - \frac{\sigma_0(a_n)}{\sigma_0(b_n)} \right| \geq  n^{1+c} - 2 \frac{C}{mn} \geq \frac12 n^{1+c}\]
	for all $n \geq n_0$, where $n_0$ depends only on $C,m,c$.  
	Using the identity $\max\{|X|,|Y|\} \geq \frac12 |X - Y|$, we conclude that
	\[
	\|a\| \geq \max\{ |\sigma_0(a)|, |\sigma_j(a)| \} \geq mn \cdot \frac12 \left| q + \frac{\sigma_0(a_n)}{\sigma_0(b_n)} - \left( q + n^{1+c} + \frac{R_{j,n}}{\sigma_j(b_n)} \right)  \right|  \geq \frac m4 \,  n^{2+c}\]
	for all $n \geq n_0$. 
\end{proof}

\subsubsection{Upper bound $\CL_L(n) \preceq n^{1 + r/(2k)}$.} \label{subsec:upperbounddense}  

The following weighted path lemma on $C_r^k$ is the combinatorial heart of the proof of the upper bound. 

\begin{lemma}\label{lemma:weightedPathLemma} Let $x_0, \ldots,x_{r-1} \geq 0$. Then for every pair of vertices $p,q \in \mathbb Z /r\mathbb Z$, there is a path from $p$ to $q$ in $C_r^k$ such that every edge $\{i,j\}$ on the path satisfies \[ x_i + x_j \leq \max\{x_p,x_q\} + \frac {\sum_i x_i}{2k}. \] \end{lemma}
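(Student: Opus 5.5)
The plan is a cut (Menger-type) argument on the graph of admissible edges. Put $M:=\max\{x_p,x_q\}$, $S:=\sum_i x_i$ and $T:=M+S/(2k)$. Call an edge $\{i,j\}$ of $C_r^k$ \emph{admissible} if $x_i+x_j\leq T$, and let $A$ be the set of vertices reachable from $p$ along admissible edges. We argue by contradiction: suppose $q\notin A$, and derive $S>S$ by exhibiting $2k$ disjoint vertices whose weights are too large.

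The basic geometric input is that a step of $C_r^k$ moves at most $k$ positions along the cycle. So any walk from $p$ to $q$ staying on one of the two arcs between them must land in every block of $k$ consecutive vertices of that arc. Going clockwise from $p$, let $u$ be a vertex of $A$ that is farthest along the clockwise arc before $q$, and let $W$ be the $k$ vertices following $u$. By maximality of $u$ (and because an admissible edge from $u$ to $q$ would put $q$ in $A$), every $w\in W$ satisfies $x_u+x_w>T$. Doing the same counterclockwise gives $u'\in A$ and a block $W'$ of $k$ vertices with $x_{u'}+x_{w'}>T$ for all $w'\in W'$.

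I would then check that $W$ and $W'$ are disjoint. If they were not, the blocks would overlap and a vertex of $W\cap W'$ would give a short admissible route contradicting maximality; the degenerate case where $q$ itself lies in the blocks is handled by the same inequality. Summing over $W\cup W'$ gives
\[
S\;\geq\;\sum_{w\in W}x_w+\sum_{w'\in W'}x_{w'}\;>\;2kT-k\,(x_u+x_{u'}) \;=\; 2kM+S-k\,(x_u+x_{u'}).
\]
This yields a contradiction as soon as $x_u+x_{u'}\leq 2M$. In particular it works when $u=u'=p$, which is exactly the situation where $p$ cannot leave in either direction.

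The main obstacle is controlling $x_u+x_{u'}$ when the frontier vertices $u,u'$ are heavy. To handle it, I would not take $u$ to be the farthest reachable vertex. Instead I would take $u$ to be a reachable vertex on the clockwise frontier of \emph{minimal} weight, and correspondingly let $W$ be the next $k$ vertices beyond the whole reachable clockwise region. Every admissible walk reaching the frontier passes through vertices of weight $\leq T-x_{\text{prev}}$, so I expect an inductive invariant of the following form. Either the reachable frontier contains a vertex of weight $\leq M$, or there is an admissible edge into a vertex of weight $\leq S/(2k)$, after which all further steps stay within the bound $T$. Establishing this invariant, and thereby reducing to $x_u,x_{u'}\leq M$, is the step I expect to need the most care. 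A symmetric argument starting from $q$, meeting the paths in the middle, may be needed to obtain the bound with $\max\{x_p,x_q\}$ rather than with a sum.
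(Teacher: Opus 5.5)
Your proposal has a genuine gap, at exactly the step you flag as the main obstacle. Summing $x_u+x_w>T$ over the block $W$ charges $k$ bad edges to the single vertex $u$, which produces the deficit $k\,x_u$. Nothing in the proposal bounds $x_u$: a reachable vertex can have weight as large as $T$. For example, if $x_p=0$ and $x_{p+1}=T$, then the edge $\{p,p+1\}$ is admissible.

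When $x_u$ is close to $T$, the constraints at $u$ only say $x_w>0$. The inequalities you sum are then consistent with each other and cannot yield $S>S$. The information that would rule such a situation out lies in bad edges leaving \emph{other} vertices of $A$. In the example, $p+2\notin A$ forces $x_{p+2}>T-x_p=T$, and a star centred at one frontier vertex does not see this.

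Your proposed repair is not carried out. The ``inductive invariant'' is neither stated precisely nor proved. Switching to a light frontier vertex also loses the inequality you need, because that vertex need not be adjacent to the whole next block: with $k=2$, $p$ is adjacent to $p+2$ but not to $p+3$. Two further steps are only asserted: that $W$ and $W'$ are disjoint, and that $W$ cannot run past $q$ into vertices of $A$ reached from the other side.

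What is missing is a way to charge each bad edge to its own disjoint set of vertices. The paper does this with Menger's theorem. Under the standing assumption $r\ge 2k+3$, $C_r^k$ is $2k$-vertex-connected, so there are $2k$ internally vertex-disjoint $p$--$q$ paths $P_1,\dots,P_{2k}$. Suppose each contains a bad edge. Let $a_\ell$ be the total weight of the interior of $P_\ell$, or $a_\ell=\min\{x_p,x_q\}$ if $P_\ell$ is the single edge $pq$. There are three cases.
\begin{itemize}
\item A bad edge with both ends interior gives $a_\ell\ge x_i+x_j>S/(2k)$.
\item A bad edge with one end in $\{p,q\}$ has its interior end of weight $>T-\max\{x_p,x_q\}=S/(2k)$.
\item A bad edge $pq$ gives $\min\{x_p,x_q\}=x_p+x_q-M>S/(2k)$.
\end{itemize}
The interiors are disjoint and avoid $p,q$, so $S\ge\sum_\ell a_\ell>S$, a contradiction. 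The only vertices shared by several paths are $p$ and $q$, whose weights are bounded by $M$ by definition. That is why no bound on intermediate frontier vertices is ever needed.
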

\begin{proof} 	First, we argue that the graph $C_r^k$ is $\geq \! 2k$-vertex-connected, where the vertex connectivity $\kappa(G)$ of a graph $G$ is the minimum number of vertices whose deletion disconnects $G$ or leaves only one vertex. The following statement (see \cite[Thm.~2]{Hobbs73}) relates the connectivity of the $r$-cycle graph $C_r$ to that of its $k^{\mathrm{th}}$ power: \[
	\kappa(C_r^k) \geq \min \big\{ \# C_r -1, k \cdot \kappa(C_r)\big\}. 
	\]
	Clearly, $\kappa(C_r) = 2$. Since $r \geq 2k+3$, we deduce $\kappa(C_r^k) \geq \min\{r-1,2k\} = 2k$, showing that $C_r^k$ is $\geq \! 2k$-vertex-connected. 
	
	\medskip 
	
	Assume $p \neq q$. Put $S := \sum_i x_i$. 
	Call an edge $\{i,j\}$ \emph{bad} if
	\[
	x_i+x_j>\max\{x_p,x_q\}+\frac{S}{2k}.
	\]
	Aiming for a contradiction, suppose that every path joining $p$ to $q$ in $C_r^k$ contains a bad edge.  	

By the global form of Menger's theorem \cite[Thm.~3.3.6(i)]{Diestel17}, since $C_r^k$ is $\geq \! 2k$-connected, the vertices
$p$ and $q$ are joined by at least $2k$ pairwise internally vertex-disjoint paths
$
P_1,\ldots,P_{2k}.
$ 
In other words, for $\ell\neq \ell'$, the only vertices that
$P_\ell$ and $P_{\ell'}$ have in common are $p$ and $q$.

Define $a_\ell := \sum_{i \in V(P_\ell)\setminus\{p,q\}} x_i$ if $P_\ell$ is not the one-edge path $p$--$q$, and define $a_\ell := \min\{x_p,x_q\}$ if $P_\ell$ is $p$--$q$.  
By internal vertex-disjointness, we see that $\sum_{\ell=1}^{2k} a_\ell \leq S$.
 
 \medskip 
 
 To obtain a contradiction, it suffices to prove the following claim: \[a_\ell > \frac S{2k} \qquad \text{for every $\ell$.} \tag{$\ast$}\]  
 Indeed, summing over $\ell = 1,\ldots, 2k$ would contradict $\sum_{\ell=1}^{2k} a_\ell \leq S$. 
 
  \medskip 
 
\noindent \textit{Proof of $(\ast)$}. If $P_\ell$ is the one-edge path $p$--$q$, then $\{p,q\}$ is bad and the claim follows from the fact that $\min\{x_p,x_q\} + \max\{x_p,x_q\} = x_p + x_q$. Suppose now that $P_\ell$ is not the one-edge path and let $\{i,j\}$ be a bad edge in $P_\ell$.  
  If both $i$  and $j$ are internal vertices of $P_\ell$, then
 \[
 a_\ell\geq x_i+x_j
 >\max\{x_p,x_q\}+\frac{S}{2k}
 \geq\frac{S}{2k}.
 \]
 Otherwise, exactly one endpoint of $\{i,j\}$ lies in $\{p,q\}$. Say $i \in \{p,q\}$ and $j$ is internal. Since $x_i \leq \max\{x_p,x_q\}$ and $\{i,j\}$ is bad, we have 
 \[
 	a_\ell\geq x_j  > \max\{x_p,x_q\} +\frac{S}{2k}-x_i  \geq\frac{S}{2k}.
 \]
 This proves the claim and yields the required contradiction. 
\end{proof}

We also need the following elementary consequence of Helly's theorem for intervals in $\mathbb R$.
\begin{lemma}\label{lemma:Helly}
	Let $y_1, \ldots, y_r \in \mathbb R$ and $w_1, \ldots, w_r > 0$. Then there exists $t \in \mathbb R$ such that \[\max_i w_i |y_i + t| \leq \max_{p,q} \frac{w_pw_q}{w_p + w_q} | y_p - y_q|.\]
\end{lemma}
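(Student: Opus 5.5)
We reduce the statement to one-dimensional Helly's theorem: a finite family of closed intervals in $\mathbb R$ has nonempty common intersection as soon as every two of its members intersect. Set
\[
D := \max_{p,q} \frac{w_pw_q}{w_p+w_q}\,|y_p-y_q| \geq 0 .
\]
For each $i$, consider the closed interval
\[
I_i := \bigl[-y_i - D/w_i,\; -y_i + D/w_i\bigr].
\]
This interval consists exactly of those $t\in\mathbb R$ with $w_i|y_i+t|\leq D$. So a point $t$ lies in $\bigcap_i I_i$ precisely when $\max_i w_i|y_i+t|\leq D$, which is the required conclusion.

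It therefore suffices to show that the intervals $I_p$ and $I_q$ intersect for every pair $p,q$. Two closed intervals with centres $-y_p,-y_q$ and radii $D/w_p, D/w_q$ meet if and only if the distance between their centres is at most the sum of the radii, i.e.
\[
|y_p-y_q| \leq D\Big(\frac1{w_p}+\frac1{w_q}\Big) = D\,\frac{w_p+w_q}{w_pw_q}.
\]
Rearranged, this says $\frac{w_pw_q}{w_p+w_q}|y_p-y_q|\leq D$, which holds by the definition of $D$.

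For $p=q$ the condition is trivial, since $I_p$ is nonempty ($D\geq 0$). Pairwise intersection of finitely many intervals gives a common point by Helly's theorem in dimension one. Concretely, we may take $t := \max_i(-y_i - D/w_i)$: pairwise intersection means every left endpoint is at most every right endpoint, so this $t$ lies in every $I_i$.

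There is no real obstacle here. The only points to check are that $D$ is nonnegative, so that the intervals are nonempty, and that the pairwise-intersection condition matches the harmonic-mean expression exactly. The degenerate case $D=0$ forces all $y_i$ to be equal, and then $t=-y_1$ works.
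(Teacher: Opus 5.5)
Your proof is correct and follows the paper's argument exactly: the same intervals $I_i$, the same pairwise-intersection check via $|y_p-y_q|\leq D/w_p+D/w_q$, and the one-dimensional Helly property. Your explicit choice $t=\max_i(-y_i-D/w_i)$ is a nice touch that makes the Helly step self-contained.
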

\begin{proof}
	Put $ R := \max_{p,q} \frac{w_pw_q}{w_p + w_q} | y_p - y_q|$. The condition $w_i|y_i + t| \leq R$ is equivalent to \[t \in I_i := \big[-y_i -  R/{w_i}, -y_i +  R/ {w_i}\big].\]
	Thus we need the intervals to have a nonempty intersection $\bigcap_{i} I_i \neq \varnothing.$  
	
	Intervals on the real line have the Helly property: pairwise intersection implies global intersection. It thus suffices to show that $I_p \cap I_q \neq \varnothing$ for each pair $p, q$. Indeed, $I_p$ and $I_q$ intersect precisely when 
	\[
	|y_p - y_q| \leq \frac R{w_p} + \frac R{w_q} \iff R \geq \frac{w_p w_q}{w_p + w_q} | y_p - y_q|.
	\]
	The latter statement clearly holds by definition of $R$. 
\end{proof}

By \eqref{eq:equivalentCL}, it remains to prove the following statement. 
\begin{lemma}
	Let $n \geq 2$, let $b \in \mathcal O \setminus \{0\}$ with $\|b\| \leq n$ and let $\delta \in B_b(\mathcal O)$ with $\|\delta\| \leq n^2$. Then there exists a solution $a' \in \mathcal O$ of $B_b(a') = \delta$ satisfying \[
	\|a' \| \ll n^{1 + r/(2k)}.
	\]
\end{lemma}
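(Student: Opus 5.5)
The plan is to parametrize all solutions of $B_b(a)=\delta$, pass to the Minkowski coordinates $\sigma_i$, and choose the free parameter using Lemma~\ref{lemma:Helly}, with the Helly bound controlled through Lemma~\ref{lemma:weightedPathLemma}.

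\emph{Reduction.} Fix one solution $a\in\mathcal O$ of $B_b(a)=\delta$. By Lemma~\ref{lemma:kernel}, the full solution set is $a+(\mathcal O\cap b\mathbb Q)$. The group $\mathcal O\cap b\mathbb Q$ is a rank-one $\mathbb Z$-module containing $b$, so it is generated by some $g=b/m$ with $m\in\mathbb N$. Hence $\|g\|\leq\|b\|\leq n$. For a real parameter $t$ we have $\sigma_i(a+tb)=\sigma_i(b)\,(y_i+t)$, where
\[
y_i:=\sigma_i(a)/\sigma_i(b),\qquad w_i:=|\sigma_i(b)|>0 .
\]
Suppose we find a real $t$ with $\max_i w_i|y_i+t|\ll n^{1+r/(2k)}$. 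Moving $tb$ to the nearest element of $a+\mathbb Zg$ changes each coordinate by at most $\|g\|\leq n$. This yields the required solution $a'$. So everything reduces to bounding the Helly quantity
\[
R:=\max_{p,q}\frac{w_pw_q}{w_p+w_q}\,|y_p-y_q|
\]
of Lemma~\ref{lemma:Helly} by $\ll n^{1+r/(2k)}$.

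\emph{Edge estimates.} Take an edge $\{i,i+s\}$ of $C_r^k$ with $1\leq s\leq k$. The $s$-th component of $\delta$ gives
\[
\sigma_i(\delta_s)=\sigma_i(a)\sigma_{i+s}(b)-\sigma_i(b)\sigma_{i+s}(a)=\pm\,\sigma_i(b)\sigma_{i+s}(b)\,(y_i-y_{i+s}).
\]
Since $\|\delta\|\leq n^2$, this gives $|y_i-y_j|\leq n^2/(w_iw_j)$ for every edge $\{i,j\}$. Non-adjacent pairs $p,q$ require chaining these bounds along a path, and the path must be chosen so that no edge on it has both endpoint weights small.

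\emph{Weighted path step (the main obstacle).} Put $x_i:=\log(n/w_i)$. Then $x_i\geq 0$ because $w_i\leq\|b\|\leq n$. Also $\prod_i w_i=|N_{K/\mathbb Q}(b)|\geq 1$, since $b$ is a nonzero algebraic integer; hence
\[
S=\sum_i x_i\leq r\log n .
\]
Lemma~\ref{lemma:weightedPathLemma} gives a path from $p$ to $q$ with at most $r$ edges, each satisfying $x_i+x_j\leq\max\{x_p,x_q\}+S/(2k)$. Exponentiating, $w_iw_j\geq n^2\cdot\frac{\min\{w_p,w_q\}}{n}\cdot n^{-r/(2k)}$. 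Therefore each edge on the path satisfies
\[
|y_i-y_j|\leq \frac{n^{1+r/(2k)}}{\min\{w_p,w_q\}}.
\]
Summing over the at most $r$ edges gives $|y_p-y_q|\ll n^{1+r/(2k)}/\min\{w_p,w_q\}$. Using $\frac{w_pw_q}{w_p+w_q}\leq\min\{w_p,w_q\}$, we get $R\ll n^{1+r/(2k)}$.

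\emph{Conclusion.} Lemma~\ref{lemma:Helly} now produces $t\in\mathbb R$ with $|\sigma_i(a+tb)|\ll n^{1+r/(2k)}$ for all $i$. Rounding to the lattice $a+\mathbb Z g$ as described in the reduction adds at most $n$ to each coordinate, which proves the lemma.

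The delicate point is the weighted path step: choosing the logarithmic weights $x_i$ correctly and using $|N(b)|\geq 1$ to bound $S$ by $r\log n$. The other steps are routine.
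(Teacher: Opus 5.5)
Your proposal is correct and follows the paper's proof essentially step for step: the same edge estimates from the $\sigma_i$-components of $B_b(a)=\delta$, the same logarithmic weights bounded via $|N_{K/\mathbb Q}(b)|\geq 1$ (your natural $\log$ versus the paper's $\log_n$ is cosmetic), then Lemma~\ref{lemma:weightedPathLemma}, Lemma~\ref{lemma:Helly}, and a final rounding. The only deviation is that you round along the generator $g=b/m$ of $\mathcal O\cap b\mathbb Q$ instead of along $b$ itself; this is harmless but unnecessary, since rounding $t$ to an integer multiple of $b$ already costs at most $\|b\|\leq n$.
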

\begin{proof}
	Let $a \in \mathcal O$ be any solution of $B_b(a) = \delta$. By Lemma~\ref{lemma:kernel}, we can modify $a$ to find a suitable $a'$ by adding an integer multiple of $b$. 
	
	\medskip 
	
	For each $i \in \mathbb Z / r\mathbb Z$, define $x_i = \log_n (n / |\sigma_i(b)|).$ Since $\|b\| \leq n$, we have $|\sigma_i(b)| \leq n$, so $x_i \geq 0$. Also $|\sigma_i(b)| = n^{1-x_i}$. 
	
	Now use the algebraic norm $N_{K/\mathbb Q}(b) = \prod_i \sigma_i(b).$ Since $b \in \mathcal O \setminus \{0\}$, we have $|N_{K/\mathbb Q}(b)| \in \mathbb Z_{\geq 1}$. Hence\[
	1 \leq \prod_i |\sigma_i(b)| = n^{r - \sum_i x_i} \implies \sum_i x_i \leq r.
	\] 
	
	Next we translate the equation $B_b(a) = \delta$ into estimates along the edges of $C_r^k$. As in the proof of Lemma~\ref{lemma:boundDelta}, for $i \in \mathbb Z/r \mathbb Z$, $1 \leq s \leq k$, the $\sigma_i$-embedding of the $s$-component of $B_b(a) = \delta$ is \[
	\sigma_i(a) \sigma_{i+s}(b) - \sigma_i(b) \sigma_{i+s}(a) = \sigma_i(\delta_s). 
	\] 
	Dividing by $\sigma_i(b)\sigma_{i+s}(b)$ and using $\|\delta\| \leq n^2$ and $|\sigma_i(b)| = n^{1-x_i}$ yields \begin{equation}
		\left| \frac{\sigma_i(a)}{\sigma_i(b)} - \frac{\sigma_{i+s}(a)}{\sigma_{i+s}(b)} \right| = \frac{|\sigma_i(\delta_s)|}{|\sigma_i(b)||\sigma_{i+s}(b)|} \leq \frac{n^2}{n^{1-x_i}\cdot n^{1-x_{i+s}}} = n^{x_i + x_{i+s}} \label{eq:boundOnEdges}
	\end{equation}
	for every edge $\{i,i+s\}$ of $C_r^k$. 
	
	Since $\sum_i x_i \leq r$, summing the bound \eqref{eq:boundOnEdges} for all edges $\{i,j\}$ in the path given by Lemma~\ref{lemma:weightedPathLemma} yields  
	\begin{equation}
		\left| \frac{\sigma_p(a)} {\sigma_{p}(b)} - \frac{\sigma_q(a)}{\sigma_{q}(b)} \right| \leq (r-1) \, n^{\max\{x_p,x_q\} + r/(2k)}  \qquad \text{for all vertices } p,q \in \mathbb Z / r \mathbb Z. \label{eq:boundOnVerticesRatio}
	\end{equation}
	
	\medskip 
	
	Applying Lemma~\ref{lemma:Helly} with \(
	y_i = {\sigma_i(a)}/{\sigma_i(b)}\) and $w_i = |\sigma_i(b)|$ gives $t \in \mathbb R$ such that \[
	 \max_i \left| \sigma_i(a) +t \sigma_i(b) \right| \leq \max_{p,q} \frac{|\sigma_p(b)||\sigma_q(b)|}{|\sigma_p(b)| + |\sigma_q(b)|} \left|  \frac{\sigma_p(a)} {\sigma_{p}(b)} - \frac{\sigma_q(a)}{\sigma_{q}(b)}  \right|.
	\]
	We bound the right-hand side. First, use  \[\frac{|\sigma_p(b)||\sigma_q(b)|}{|\sigma_p(b)| + |\sigma_q(b)|} \leq \min \{|\sigma_p(b)|, |\sigma_q(b)|\} = \min \{ n^{1-x_p}, n^{1-x_q} \} = n^{1-\max \{x_p,x_q\}}. \]
	Together with \eqref{eq:boundOnVerticesRatio}, this gives 
	\[
	\frac{|\sigma_p(b)||\sigma_q(b)|}{|\sigma_p(b)| + |\sigma_q(b)|} \left|  \frac{\sigma_p(a)} {\sigma_{p}(b)} - \frac{\sigma_q(a)}{\sigma_{q}(b)}  \right| \ll n^{1 - \max\{x_p,x_q\}} \cdot n^{\max\{x_p,x_q\} + r/(2k)} = n^{1 + r/(2k)}.
	\]
	
	\medskip 
	
	Therefore, the affine line $a + \mathbb R b$ contains a point $a+tb$ of size $\ll n^{1 + r/(2k)}$. Replacing $t$ by an integer $m \in \mathbb Z$ satisfying $|m-t| \leq 1$ gives the required element $a' = a + m b \in \mathcal O$ also of size $\ll n^{1 + r/(2k)}$. Indeed, $B_b(a') = B_b(a) + m B_b(b) = \delta$ and for every $i$,
	\begin{align*}
		|\sigma_i(a')| = |\sigma_i(a) + m \sigma_i(b)| \leq |\sigma_i(a) + t \sigma_i(b) | + |m-t| |\sigma_i(b)| \ll n^{1 + r/(2k)} + 1 \cdot n, 
	\end{align*}
	which is $\ll n^{1 + r/(2k)}$. 
	Taking the maximum over all embeddings gives $\|a'\| \ll n^{1 + r/(2k)}$. 
\end{proof}

\section{Classifying conjugator length in nilpotent groups is ``hard''  (Theorem~\ref{thm:unboundedpartialquotients})}\label{sec:hard} 

\noindent In this section, we prove that characterizing conjugator length in 2-step nilpotent groups is at least as hard as a partial resolution of the following long-standing classical conjecture in Diophantine approximation. 

\begin{conjecture}[Khinchin \cite{Khinchin1949ContinuedFractions}]\label{conjecture:boundedPartialQuotients}
	Every real algebraic number $\theta$ of degree $\geq 3$ has unbounded partial quotients in its continued fraction expansion.
\end{conjecture}

Recall that if $\theta \in \mathbb R \setminus \mathbb Q$, its regular continued fraction expansion is \[\theta = [a_0;a_1,a_2,\ldots] := a_0+\cfrac{1}{a_1+\cfrac{1}{a_2+\ddots}},\] where \(a_0 \in \mathbb Z, \, a_1,a_2,\ldots \in \mathbb Z_{\geq 1}.\) This expansion is unique. The number $\theta$ is said to have \emph{bounded partial quotients} if $\sup_{i \geq 1} a_i < \infty$. 

The conjecture is open in the strongest possible sense: for no specific real algebraic number of degree $\geq 3$ is it known
whether its partial quotients are bounded or unbounded. For discussions of this open problem and related results, see \cite[\S4]{Shallit1992BoundedPartialQuotients}, \cite{AdamczewskiBugeaud2005ComplexityII}, \cite{AdamczewskiBugeaudDavison2006CFTrans}, \cite[Conj.\ 1.1]{BosmaGruenewald2012Complex}. 


\medskip 

Let $\theta$ be a real algebraic number of degree 3. Define the 2-step nilpotent rational Lie algebra $\mathfrak g_\theta$ as the $8$-dimensional $\mathbb Q$-Lie ideal 
\[
\mathfrak g_\theta := \big( \mathbb Q + \mathbb Q \theta \big) X \oplus \mathbb Q(\theta) Y \oplus \mathbb Q(\theta) Z \subset \mathfrak h \big( \mathbb Q(\theta) \big),
\]
where $\mathfrak h\big( \mathbb Q(\theta) \big)$ is the 9-dimensional $\mathbb Q$-Lie algebra defined in \S\ref{sec:noQI}. Equivalently, its Lie bracket is  
\[
\big[(u,x,z), (u',x',z')\big] = (0,0,ux'-u'x), \quad u,u' \in \mathbb Q + \mathbb Q\theta, \quad x,x' \in \mathbb Q(\theta), \quad z,z' \in \mathbb Q(\theta).
\]
  \setcounter{letterthmbody}{3}  
\begin{letterthmbody}\label{thmbody:unboundedpartialquotients} Let $\theta$ be a real algebraic number of degree $3$. Then
	$\mathfrak g_\theta$ satisfies: 
	\begin{enumerate}
\item $n^3 \preceq \CL_{\mathfrak g_{\theta}}(n) \preceq n^{3 + \varepsilon}$ for every $\varepsilon > 0$, 
\item $n^3 \prec \CL_{\mathfrak g_{\theta}}(n)$ if and only if a real conjugate of $\theta$ has unbounded partial quotients. 
\end{enumerate}
\end{letterthmbody}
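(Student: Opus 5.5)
The plan is to work in the full Lie ring
\[
L:=(\mathbb Z+\mathbb Z\theta)X\oplus\mathcal O Y\oplus\mathcal O Z,
\]
where $\mathcal O$ is an order of $K=\mathbb Q(\theta)$ containing $\theta$; we may first rescale $\theta$ by an integer to make it an algebraic integer, which does not change $\mathfrak g_\theta$. By Proposition~\ref{prop:full-liering-CL} this suffices. The Guivarc'h norm is $\max\{\|u\|,\|x\|,\|z\|^{1/2}\}$ with the Archimedean sup-norm on $K$. Exactly as in \S\ref{subsec:Conjugacyequationandconjugatorlengthfunction}, $(u,x,z)$ is conjugate to $(u,x,z+\delta)$ via $(p,q,r)$ if and only if
\[
px-uq=\delta,\qquad p\in\mathbb Z+\mathbb Z\theta,\ q\in\mathcal O .
\]
So $\CL_L(n)$ is the worst case, over $\|u\|,\|x\|\le n$ and $\|\delta\|\le n^2$ in the image, of the smallest solution $(p,q)$.

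\textbf{Lower bound $n^3$.} I take $x=0$, so the equation becomes $q=-\delta/u$, which has a unique solution. By Dirichlet approximation for $\theta$, or for a real conjugate $\theta_1$, I pick $u=a+b\theta$ with $|a|,|b|\ll n$ and $|\sigma_1(u)|\ll 1/n$. The other conjugates then satisfy $|\sigma_i(u)|\ll n$. Using that the Minkowski image of $\mathcal O$ is a lattice, I choose $q\in\mathcal O$ with $|\sigma_1(q)|\asymp n^3$ and $|\sigma_i(q)|\ll n$ for $i\ne1$, and I set $\delta:=-uq$. Then $\|\delta\|\ll n^2$, while the forced conjugator has $\|q\|\gg n^3$. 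This gives $n^3\preceq\CL$.

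\textbf{Upper bound.} Assume $u\neq0$; the cases $u=0$ or $x=0$ give unique or one-variable equations and are handled the same way. The homogeneous solutions are the pairs $(p,q)$ with $p\in\mathbb Z+\mathbb Z\theta$ and $px\in u\mathcal O$. This is a rank-$2$ sublattice of finite index in $\mathbb Z+\mathbb Z\theta$, and its index is controlled by the norm of $u$. Starting from any solution $(p_0,q_0)$, we want $p=p_0+p'$ with $p'$ in this lattice such that both $p$ and
\[
q=\frac{px-\delta}{u}
\]
are small in every embedding. Dividing by $\sigma_i(u)$ shows that the size of $q$ at embedding $i$ is governed by $|\sigma_i(x)|\cdot|\sigma_i(p)|/|\sigma_i(u)|$ plus $n^2/|\sigma_i(u)|$. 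So I would reduce to a two-dimensional lattice-covering problem, in the spirit of Lemma~\ref{lemma:Helly} and the weighted estimates in \S\ref{subsec:upperbounddense}. The outcome should be a bound of the form $\CL\ll n^2\cdot\max_i|\sigma_i(u)|^{-1}$ up to harmless factors. Here $\sigma_i(u)=a+b\theta_i$ for the real conjugates $\theta_i$, while at a nonreal conjugate $|a+b\theta_i|\gg\max(|a|,|b|)$ automatically.

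\textbf{Diophantine input and the main obstacle.} Roth's theorem gives $|a+b\theta_i|\gg_\varepsilon H^{-1-\varepsilon}$ with $H=\max(|a|,|b|)$, which yields $\CL\preceq n^{3+\varepsilon}$. Bounded partial quotients of every real conjugate is equivalent to $|a+b\theta_i|\gg H^{-1}$ for all real $\theta_i$, and that gives $\CL\preceq n^3$. Conversely, if some real conjugate has unbounded partial quotients, there are convergents with $|a+b\theta_i|\le 1/(\lambda H)$ for $\lambda\to\infty$. Feeding such $u$ into the lower-bound construction, with $n\asymp H$, gives forced conjugators of size $\gg\lambda n^3$ along a subsequence, so $\CL\not\preceq n^3$. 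I expect the main obstacle to be the upper-bound lattice step: showing that the $\mathbb Z^2$ of freedom in $p$ suffices to make $q$ small simultaneously in all three embeddings, with loss only $1/\min_i|\sigma_i(u)|$. This requires controlling the covolume of the solution lattice in terms of $N(u)$ and the sizes of $x$, and I would attempt it by a Minkowski successive-minima argument.
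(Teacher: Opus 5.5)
Your lower bound and your Diophantine translations are correct and essentially match the paper. This covers Dirichlet for $n^3$, Roth for the $\varepsilon$-loss, the equivalence of badly approximable and bounded partial quotients, and convergents with large partial quotients for $n^3\prec\CL$. The paper likewise uses $g=(u,0,0)$, $h=(u,0,\delta)$ with $\delta=-uq$, so that the $Y$-coordinate of every conjugator is forced. It packages the Diophantine input into $s_\theta(n)=\min\{\min_i|\sigma_i(u)| : 0\neq u\in\mathbb Z+\mathbb Z\theta,\ \|u\|\le n\}$ and proves $\CL\simeq n^2/s_\theta(n)$.

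The genuine gap is the upper bound, which you leave as the expected outcome of an unspecified successive-minima argument. Without it you have neither $\CL\preceq n^{3+\varepsilon}$ nor ``all real conjugates badly approximable $\Rightarrow\CL\simeq n^3$'', so half of each assertion is unproved. Moreover, the per-$u$ target $n^2/\min_i|\sigma_i(u)|$ that you aim for is false even up to constants. Take $\mathcal O=\mathbb Z[\theta]$, $u=n+\theta$, $x=1$ and $\delta=\lfloor n/2\rfloor\theta^2$. Since $\mathbb Z[\theta]/u\mathbb Z[\theta]\cong\mathbb Z/N(u)\mathbb Z$ via $\theta\mapsto -n$, with $|N(u)|=n^3+O(n^2)$, the condition $p=a+b\theta\equiv\delta\pmod{u}$ reads $a-bn\equiv\lfloor n/2\rfloor n^2\pmod{N(u)}$. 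For large $n$ this has no solution with $|a|,|b|\le n^2/5$. So every conjugator has $\|p\|\gg n^2$; the pair is indeed conjugate, via $p=-\lfloor n/2\rfloor n\theta$ and $q=-\lfloor n/2\rfloor\theta$. By contrast, $n^2/\min_i|\sigma_i(u)|\asymp n$.

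The missing idea is cruder and needs no geometry of numbers: accept an additive $n^3$, which is harmless because $n^2/s_\theta(n)\gg n^3$. Put $M=|N(u)|\le\|u\|^3\le n^3$. By Cayley--Hamilton, $N(u)/u\in\mathbb Z[u]\subset\mathcal O$, so your homogeneous lattice contains $M(\mathbb Z+\mathbb Z\theta)$. Reducing the two coordinates of $p$ modulo $M$ gives $\|p\|\ll M\le n^3$, and then
\[
|\sigma_i(q)|\le\frac{|\sigma_i(x)|\,|\sigma_i(p)|+|\sigma_i(\delta)|}{|\sigma_i(u)|}\ll n\prod_{j\neq i}|\sigma_j(u)|+\frac{n^2}{|\sigma_i(u)|}\le n^3+\frac{n^2}{s_\theta(n)}.
\]
The key point is $M/|\sigma_i(u)|=\prod_{j\neq i}|\sigma_j(u)|\le n^2$.

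Your case $u=0$ is also not handled ``the same way''. There $p=\delta/x$ is forced, and since $x\in\mathcal O$ is arbitrary, $\min_i|\sigma_i(x)|$ can be of order $n^{-2}$ (take $x$ a suitable unit). Dividing embedding-wise therefore only gives $n^4$. You need that $p$ lies in the rank-$2$ lattice $\mathbb Z+\mathbb Z\theta$ to get $\|p\|\ll n^3$. For example, apply Cramer's rule to the consistent $3\times2$ integer system $px=\delta$; the paper cites Borosh--Flahive--Rubin--Treybig for this step.
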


To prove Theorem~\ref{thmbody:unboundedpartialquotients}, we first explain why we can reduce the proof to algebraic \emph{integers}. First, the construction of $\mathfrak g_\theta$ is unchanged under multiplication by a nonzero rational number. If $q \in \mathbb Q^{\times}$, then $\mathfrak g_{q \theta} = \mathfrak g_{\theta}$. Second, boundedness of the partial quotients of a real irrational number is invariant under multiplication by a nonzero rational number. Indeed, an irrational real number $\eta \in \mathbb R \setminus \mathbb Q$ has bounded partial quotients if and only if $\eta$ is \emph{badly approximable}, that is,  \begin{equation*}
	|P - Q \eta| \gg_{\eta} \frac 1 {|Q|} \qquad \text{for every } P,Q \in \mathbb Z,\, Q \neq 0, 
\end{equation*}
see \cite[Thm.\ 1.9]{Bugeaud2004Approximation}. This property is preserved under multiplication by a nonzero rational number. 

Choose a positive integer $A$ such that $\beta:=A\theta$ is an algebraic integer. Then $\mathfrak g_\theta=\mathfrak g_\beta$. Moreover, for every real conjugate $\theta_i$ of $\theta$, the corresponding conjugate $\beta_i=A\theta_i$ has bounded partial quotients if and only if $\theta_i$ does. Consequently, it suffices to prove Theorem~\ref{thmbody:unboundedpartialquotients} when $\theta$ is an algebraic integer. 



\subsection{Construction of $ L_\theta \subset \mathfrak g_\theta$} For the remainder of this section, fix a real algebraic \emph{integer} $\theta$ of degree 3. Let $m_\theta(T) = T^3 + a_2 T^2 + a_1 T + a_0$ be its minimal polynomial.

Let \(\sigma_1,\sigma_2,\sigma_3\colon\mathbb Q(\theta)\hookrightarrow\mathbb C\) be the distinct embeddings, indexed so that \(\sigma_1(\theta)=\theta\), and write \(\theta_i:=\sigma_i(\theta)\). Then \(\theta_1=\theta,\theta_2,\theta_3\) are the three conjugates of \(\theta\). For $x \in \mathbb Q(\theta)$, define the Archimedean supremum norm  $
\| x \| = \max_i |\sigma_i(x) |.
$  We work in the Lie ring
 \[
 L_\theta := (\mathbb Z + \mathbb Z \theta) \oplus \mathbb Z[\theta] \oplus \mathbb Z[\theta] \subset \mathfrak g_{\theta}
 \]
 and use the Guivarc'h length
 \[
 \big\| (u,x,z) \big\|_{\mathrm{Gv}} = \max \{ \|u\|, \|x\|, \|z\|^{1/2}\}.
 \]

\subsection{Proof of Theorem~\ref{thmbody:unboundedpartialquotients} for $L_\theta$} By \S\ref{sec:reductiontoLierings}, it suffices to prove Theorem~\ref{thmbody:unboundedpartialquotients} for $L_\theta$. 

For $x \in \mathbb Q(\theta)$, put $s(x) = \min_i |\sigma_i(x)|$. Define the \emph{small-divisor function} $s_\theta \colon \mathbb N \to \mathbb R_{>0}$ by 
\[
s_\theta(n) = \min \big\{ s(\alpha) \:\big|\: 0 \neq \alpha \in \mathbb Z + \mathbb Z \theta, \, \| \alpha \| \leq n \big\}.
\]
Observe that $s_\theta$ is well-defined: the set over which we take the minimum is finite for every $n\geq 1$ and it always contains $1 \in \mathbb Z + \mathbb Z\theta$. 

In the following two lemmas, we prove Theorem~\ref{thmbody:unboundedpartialquotients} for $L_\theta$ by first proving the necessary estimates on $n^2 / s_\theta(n)$ and then showing that $\CL_{L_\theta}(n) \simeq n^2 / s_\theta(n)$. 
\begin{lemma}\label{lemma:estimates-on-n2-mtheta} Let $\theta$ be a real cubic algebraic integer. Then:
	\begin{enumerate}
	\item \(n^3 \preceq \cfrac{n^2}{s_\theta(n)} \preceq n^{3 + \varepsilon}\) for every $\varepsilon>0$. 
	\label{item:estimates-on-mtheta}
	\item $n^3 \prec \cfrac{n^2}{ s_\theta(n)} $ if and only if a real conjugate of $\theta$ has unbounded partial quotients. \label{item:unbounded-partial-quotients}
	\end{enumerate}
\end{lemma}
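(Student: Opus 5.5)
The plan is to reduce everything to one-dimensional Diophantine approximation of the real conjugates of $\theta$. For $0\neq\alpha=P+Q\theta\in\mathbb Z+\mathbb Z\theta$ with $\|\alpha\|\leq n$, the conjugates are $\sigma_i(\alpha)=P+Q\theta_i$. Subtracting two of them gives $Q=(\sigma_1(\alpha)-\sigma_2(\alpha))/(\theta_1-\theta_2)$, so $|Q|\ll n$, and then $|P|\ll n$ as well. Conversely, $|P|,|Q|\ll n$ gives $\|\alpha\|\ll n$. So up to constants, $s_\theta(n)$ is the minimum of $\min_i|P+Q\theta_i|$ over nonzero pairs $(P,Q)$ with $|P|,|Q|\ll n$. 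In this minimum I would split according to whether $\theta_i$ is real or nonreal, and whether $Q=0$. If $Q=0$, then $\alpha=P$ is a nonzero integer and $s(\alpha)\geq1$. If $Q\neq0$ and $\theta_i$ is nonreal, then $|P+Q\theta_i|\geq|Q|\,|\operatorname{Im}\theta_i|\gg1$. Hence small values of $s(\alpha)$ can only come from $|P+Q\theta_i|$ with $\theta_i$ a real conjugate and $1\leq|Q|\ll n$.

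\emph{Part (1), lower bound.} Since $\theta=\theta_1$ is real, Dirichlet's approximation theorem gives, for each $N$, integers $1\leq Q\leq N$ and $P$ with $|P-Q\theta|<1/N$. The element $\alpha=P-Q\theta$ then has $\|\alpha\|\ll N$ and $s(\alpha)\leq|\sigma_1(\alpha)|<1/N$. Rescaling $N\asymp n$ gives $s_\theta(n)\ll 1/n$, hence $n^3\preceq n^2/s_\theta(n)$.

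\emph{Part (1), upper bound.} By the case split above it suffices to bound $|P+Q\theta_i|$ from below for real $\theta_i$ and $1\leq|Q|\ll n$. Each real $\theta_i$ is an algebraic irrational, so Roth's theorem gives $|P+Q\theta_i|\gg_\varepsilon|Q|^{-1-\varepsilon}\gg n^{-1-\varepsilon}$. Therefore $s_\theta(n)\gg_\varepsilon n^{-1-\varepsilon}$ and $n^2/s_\theta(n)\preceq n^{3+\varepsilon}$.

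\emph{Part (2).} Suppose every real conjugate $\theta_i$ has bounded partial quotients. By the badly approximable characterisation recalled above, $|P+Q\theta_i|\gg1/|Q|\gg1/n$, and together with the other cases this gives $s_\theta(n)\gg1/n$. Hence $n^2/s_\theta(n)\preceq n^3$, and combined with part (1) we get $\simeq$. Conversely, suppose some real conjugate $\theta_i$ is not badly approximable. Then there are $Q_j\to\infty$ and $P_j$ with $|P_j+Q_j\theta_i|\leq\varepsilon_j/Q_j$ and $\varepsilon_j\to0$. Put $\alpha_j=P_j+Q_j\theta$. Since $|P_j|\ll Q_j$, every conjugate of $\alpha_j$ is $\ll Q_j$, so $\|\alpha_j\|\leq n_j$ with $n_j\asymp Q_j$, while $s(\alpha_j)\leq\varepsilon_j/Q_j$. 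This gives $n_j\,s_\theta(n_j)\to0$.

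To conclude $n^2/s_\theta(n)\not\preceq n^3$, suppose instead that $n^2/s_\theta(n)\leq C(Cn+C)^3+C$ for all $n$. Evaluating at $n=n_j$ gives $s_\theta(n_j)\gg1/n_j$, contradicting $n_j\,s_\theta(n_j)\to0$.

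I expect the main obstacle to be bookkeeping rather than ideas. One must check carefully that the constraint $\|\alpha\|\leq n$ is equivalent to $|P|,|Q|\ll n$ up to constants, including the nonreal conjugates when $\theta$ has two of them. One must also handle the $\preceq$ definition (rescaled arguments $Cn+C$) using monotonicity of $s_\theta$. The deep inputs, Roth's theorem and the badly approximable characterisation, are used as black boxes.
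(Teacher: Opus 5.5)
Your proposal is correct and follows essentially the same route as the paper. Both use Dirichlet's approximation theorem for $s_\theta(n)\ll n^{-1}$; the bound $|Q|\ll n$ from subtracting conjugates, combined with Roth's theorem (and the imaginary-part bound for nonreal conjugates), for $s_\theta(n)\gg_\varepsilon n^{-1-\varepsilon}$; and the badly approximable characterisation for part (2). The only cosmetic difference is that you prove the converse in (2) by contrapositive, with an explicit sequence $\alpha_j$ and an unpacking of $\preceq$, whereas the paper argues directly that $s_\theta(n)\gg n^{-1}$ forces each real conjugate to be badly approximable.
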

\begin{proof}
	
\noindent \eqref{item:estimates-on-mtheta}. It suffices to show \begin{equation}
	n^{-1-\varepsilon} \ll_{\varepsilon} s_\theta(n) \ll n^{-1}. \label{eq:estimates-on-n2-mtheta}
	\end{equation}
	
	First we prove $s_\theta(n) \ll n^{-1}$. Fix $n \geq 1$. By Dirichlet's approximation theorem, there exist integers $P,Q$ with $1 \leq Q \leq n$ such that $|P - Q\theta| \leq 1/n$. Put $\alpha = P-Q \theta$.  
	We must check that $\|\alpha\| \ll n$. For the distinguished embedding, this is clear: $|\sigma_1(\alpha)| = |P - Q \theta| \leq 1/n$. For the other embeddings ($i = 2,3$), we have
	\[
	|\sigma_i(\alpha)| = |P - Q\theta_i| \leq |P-Q\theta| + Q |\theta - \theta_i| \ll n. 
	\]
	Since $s(\alpha) \leq |\sigma_1(\alpha)| = |P - Q \theta| \leq 1/n$, this proves  $s_\theta(Cn) \ll n^{-1}$ for some constant $C \geq 1$ depending only on $\theta$. After increasing constants and using that $s_\theta$ is nonincreasing, we obtain $s_\theta(n) \ll n^{-1}$. 
	
	\medskip 
	
Fix $\varepsilon > 0$. Now we prove $s_\theta(n) \gg_\varepsilon n^{-1-\varepsilon}$. 	Let $0 \neq \alpha = P - Q\theta \in \mathbb Z + \mathbb Z\theta$ with $\|\alpha\| \leq n$. First note that $|Q| \ll n$. Indeed, 
	\[
	|Q| = \frac{|\alpha - \sigma_2(\alpha)|}{|\theta - \theta_2|} \leq \frac{|\alpha| + |\sigma_2(\alpha)|}{|\theta - \theta_2|} \ll \|\alpha\| \leq n.
	\]
	It remains to show that $|\sigma_i(\alpha)| \gg_\varepsilon n^{-1-\varepsilon}$ for a fixed embedding $\sigma_i$. If $Q = 0$, then $\alpha = P \neq 0$, so $|\sigma_i(\alpha)| = |P| \geq 1 \geq n^{-1-\varepsilon}.$ 
	Assume $Q \neq 0$. If $\theta_i$ is real, then $\theta_i$ is a real algebraic irrational of degree 3. Roth's theorem (see \cite[Ch.\ V, Thm.\ 2A]{Schmidt1980DiophantineApproximation}, \cite{Rot55}) gives 
	\[
	\left|\theta_i - \frac PQ\right| \gg_{\varepsilon} \frac 1 {|Q|^{2+\varepsilon}}.
	\]
	Therefore, \[
	|\sigma_i(\alpha)| = |Q| \cdot \left|\theta_i - \frac PQ\right| \gg_\varepsilon |Q|^{-1-\varepsilon} \gg_\varepsilon n^{-1-\varepsilon}.
	\]
If $\theta_i$ is non-real, then similarly $|\sigma_i(\alpha)| \gg |Q| \cdot |\mathfrak{Im} (\theta_i)| \gg  1 \geq n^{-1-\varepsilon}$ since $P/Q \in \mathbb R$. This completes the proof of \eqref{item:estimates-on-mtheta}.

\medskip 

\noindent \eqref{item:unbounded-partial-quotients}.	By \eqref{eq:estimates-on-n2-mtheta}, it suffices to show that $s_\theta(n) \gg n^{-1}$ if and only if every real conjugate of $\theta$ has bounded partial quotients. We use the standard result (see, for example, \cite[Thm.\ 1.9]{Bugeaud2004Approximation}) that $\theta_i \in \mathbb R\setminus \mathbb Q$ has bounded partial quotients if and only if $\theta_i$ is \emph{badly approximable}, that is, 
\begin{equation}
	|P - Q \theta_i| \gg_{\theta_i} \frac 1 {|Q|} \qquad \text{for every } P,Q \in \mathbb Z,\, Q \neq 0. \label{eq:bounded-partial-quotients}
\end{equation}

First assume that \eqref{eq:bounded-partial-quotients} holds for each real conjugate $\theta_i$.  
Let $n \geq 1$ and let $0 \neq \alpha = P - Q \theta \in \mathbb Z + \mathbb Z \theta$ with $\|\alpha \| \leq n$. The same argument as in the proof of Lemma~\ref{lemma:estimates-on-n2-mtheta}, with Roth's theorem replaced by \eqref{eq:bounded-partial-quotients}, gives $|\sigma_i(\alpha)| \gg n^{-1}$ for each $i$. Hence, $s_\theta(n) \gg n^{-1}$.

\medskip 

For the converse direction, assume that $s_\theta(n) \gg n^{-1}$. Let $\theta_i$ be a real conjugate of $\theta$. We prove \eqref{eq:bounded-partial-quotients}. Let $P,Q \in \mathbb Z$ with $Q \neq 0$ and put $\alpha := P-Q \theta \neq 0$. 
If $|P-Q\theta_i| \geq 1$, then automatically $|P-Q\theta_i|\geq 1/|Q|$. So assume $|P-Q\theta_i| < 1$. Then $|P| \ll |Q|$, so 
\[
|\sigma_j(\alpha)| \leq |P| + |Q| |\theta_j| \ll |Q|
\]
for every embedding $\sigma_j$. Hence $\|\alpha\| \ll |Q|$. Since $s_\theta(n) \gg n^{-1}$, we deduce
\[
|P-Q\theta_i| \geq s(\alpha) \gg \frac 1{|Q|}.
\]
This shows that $\theta_i$ has bounded partial quotients, completing the proof.
\end{proof}

\begin{lemma}\label{lemma:CL-is-n2-mtheta}
	$\CL_{L_\theta}(n) \simeq \cfrac{n^2}{s_\theta(n)}$.
	\end{lemma}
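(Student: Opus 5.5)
The plan is to make the conjugacy equation explicit in $L_\theta$ and reduce both bounds to estimates in the number field $K=\mathbb Q(\theta)$. Write $U:=\mathbb Z+\mathbb Z\theta$ and $\mathcal O:=\mathbb Z[\theta]$. For $w=(p,q,r)$ and $g=(u,x,z)$ the bracket formula gives $e^{\ad w}(g)=(u,x,z+px-uq)$. Hence $g$ and $g'=(u,x,z+\delta)$ are conjugate exactly when the equation
\[
px-uq=\delta,\qquad p\in U,\ q\in\mathcal O
\]
has a solution. As in \eqref{eq:equivalentCL}, $\CL_{L_\theta}(n)$ is then equivalent to the worst case, over $\|u\|,\|x\|\leq n$ and solvable $\delta$ with $\|\delta\|\leq n^2$, of the minimal $\max\{\|p\|,\|q\|\}$ over solutions. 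The $r$-coordinate is irrelevant, since it contributes only $\|r\|^{1/2}$ and may be taken to be $0$.

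\emph{Lower bound.} Take $x=0$ and $u=\alpha\in U$ with $\|\alpha\|\leq n$ and $s(\alpha)=s_\theta(n)$, attained at an embedding $\sigma_i$. Then $p$ does not appear in the equation, and $q=-\delta/\alpha$ is forced. I will choose $\delta=\alpha\beta$ with $\beta\in\mathcal O$ lying in the box $\{|\sigma_j(\beta)|\leq n^2/|\sigma_j(\alpha)|\ \text{for all } j\}$ and with $|\sigma_i(\beta)|\geq\tfrac12 n^2/s(\alpha)$. To find $\beta$, note that every side of this box is $\geq n^2/\|\alpha\|\geq n$, which is much larger than the covering radius of the Minkowski lattice of $\mathcal O$. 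Approximating the point whose $\sigma_i$-coordinate is $\tfrac34 n^2/s(\alpha)$ and whose other coordinates are $0$ (grouping the complex pair into real coordinates if needed) gives such a $\beta$. Then $\|\delta\|\ll n^2$ and the element $(\alpha,0,0)$ has bounded Guivarc'h length. Every conjugator has $\|q\|=\|\beta\|\gg n^2/s_\theta(n)$, so $n^2/s_\theta(n)\preceq\CL_{L_\theta}(n)$.

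\emph{Upper bound, case $u=0$.} Here $q$ is free, so I take $q=0$, and then $p=\delta/x\in U$ is determined. An element $p=P-Q\theta\in U$ is controlled by any two of its embeddings, since $|Q|\ll|\sigma_a(p)-\sigma_b(p)|$ and hence $\|p\|\ll|\sigma_a(p)|+|\sigma_b(p)|$. Order $|\sigma_1(x)|\geq|\sigma_2(x)|\geq|\sigma_3(x)|$ (after relabelling). Then $|N(x)|\geq1$ and $|\sigma_1(x)|\leq n$ give $|\sigma_2(x)|\geq n^{-1/2}$. Evaluating $p$ at these two embeddings yields $\|p\|\ll n^{5/2}\ll n^3\preceq n^2/s_\theta(n)$, using Lemma~\ref{lemma:estimates-on-n2-mtheta}.

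\emph{Upper bound, case $u\neq0$.} Starting from any solution $(p_0,q_0)$, every $p$ in the coset $p_0+\Lambda$ gives a solution $q=(px-\delta)/u$, where $\Lambda:=\{p\in U: px\in u\mathcal O\}$. For such $p$,
\[
\|q\|\leq\frac{\|p\|\,\|x\|+\|\delta\|}{s(u)}\leq\frac{n\|p\|+n^2}{s_\theta(n)},
\]
since $u\in U\setminus\{0\}$ and $\|u\|\leq n$. It therefore suffices to find $p\in p_0+\Lambda$ with $\|p\|\ll n$. The lattice $\Lambda$ contains $u$ and $N_{K/\mathbb Q}(u)\,U$, so it has rank $2$. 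The main obstacle is showing that its covering radius in $U$ (or at least the covering radius of the relevant coset) is $\ll n$.

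My first attempt is a Helly/rounding argument as in the proof of Theorem~\ref{thmbody:denseCLspec}: move along the kernel direction $(u,x)$ with a real parameter and then round to an integer. This handles one dimension of $\Lambda$. For the second dimension I would exploit the symmetry of the equation, solving for $p$ in terms of a small $q$ when $x$ is large at two embeddings, exactly as in the case $u=0$. I would split according to which embeddings make $u$ and $x$ small. If $\|p\|\ll n$ cannot be achieved, the fallback is to show directly that the $\sigma_j$-sizes of the solution set satisfy $|\sigma_j(q)|\ll n^2/s_\theta(n)$ embedding by embedding. In either approach, the Roth-type input enters only through $s(u)\geq s_\theta(n)$.
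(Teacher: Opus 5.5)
You have a genuine gap in the case $u\neq0$, which is the heart of the upper bound. Your lower-bound witness and your case $u=0$ are sound. The case $u=0$, via the two largest embeddings of $x$, even gives $\|p\|\ll n^{5/2}$ without the Borosh--Flahive--Rubin--Treybig bound the paper uses. One small slip: $(\alpha,0,0)$ has Guivarc'h length $\leq n$, not bounded length.

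\textbf{Why the intermediate goal fails.} You reduce the case $u\neq0$ to finding $p\in p_0+\Lambda$ with $\|p\|\ll n$, and this is false in general. Take $x=1$ and $u=P-Q\theta$ with $\gcd(P,Q)=1$ and $|\sigma_i(u)|\asymp n$ for all $i$, so that $M:=|N(u)|\asymp n^3$.
\begin{itemize}
\item Here $\Lambda=U\cap u\mathcal O$. Since $N(u)\equiv P^3\pmod Q$, the integer $Q$ is invertible modulo $M$. Hence $\mathcal O/u\mathcal O$ is cyclic of order $M$, generated by $1$, and $[U:\Lambda]=M$.
\item As $u\in\Lambda$ with $\|u\|\leq n$, Minkowski's second theorem gives $\lambda_2(\Lambda)\gg M/n\asymp n^2$. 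So $\Lambda$ has covering radius $\gg n^2$.
\item Every coset arises from admissible data. For $p_0\in U$, pick $q_0\in\mathcal O$ at bounded Minkowski distance from $p_0/u$; then $\delta=p_0-uq_0$ satisfies $\|\delta\|\ll\|u\|\leq n$.
\end{itemize}
Choosing $p_0$ near a deep hole of $\Lambda$, every solution then has $\|p\|\gg n^2$. So no Helly or rounding argument can deliver your intermediate goal. Your fallback of bounding $|\sigma_j(q)|$ embedding by embedding merely restates the conclusion.

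\textbf{The missing idea.} You do not need $p$ to be short. What you must avoid is replacing $s(u)$ by $s_\theta(n)$ in the term $\|x\|\,\|p\|/s(u)$. You already noted $M\,U\subset\Lambda$. Reducing the coefficients of $p_0$ modulo $M$ therefore gives $p\in p_0+\Lambda$ with $\|p\|\ll M\leq\|u\|^3\leq n^3$. Then
\[
\|q\|\leq\frac{\|x\|\,\|p\|+\|\delta\|}{s(u)}\ll\frac{nM}{s(u)}+\frac{n^2}{s(u)},\qquad \frac{M}{s(u)}=\prod_{i\neq i_0}|\sigma_i(u)|\leq\|u\|^2\leq n^2,
\]
where $\sigma_{i_0}$ realizes $s(u)$. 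Thus $\|q\|\ll n^3+n^2/s_\theta(n)$ and $\|p\|\ll n^3$. Lemma~\ref{lemma:estimates-on-n2-mtheta} gives $n^3\preceq n^2/s_\theta(n)$, which completes the upper bound.

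This is the paper's argument. A large norm $M$ forces $s(u)\geq M/n^2$, and that compensation is exactly what you lose when you bound $s(u)$ below by $s_\theta(n)$ in both terms.
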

\begin{proof} First we deduce the conjugacy equation. Let $g = (\alpha, y,z), h = (\alpha,y,z + \delta) \in L_\theta$. Then $w = (\beta,x,0) \in L_\theta$ conjugates $g$ to $h$ if and only if
	\begin{equation}
 g + [w,g] = h \iff y \beta - \alpha x = \delta \label{eq:conjugacy-equation}.
	\end{equation}
	
	\medskip 
	
\noindent \textit{Upper bound} $\CL_{L_\theta}(n) \preceq n^2 / s_\theta(n)$\textit{.} Let $g,h \in L_\theta$ be conjugate with $\| g\|_{\mathrm{Gv}}, \| h\|_{\mathrm{Gv}} \leq n$. As above, write $g = (\alpha, y,z), h = (\alpha,y,z + \delta)$. Then $\|\alpha \| \leq n$,  $\|y\| \leq n$ and $\|\delta \| \ll n^2$. 
Since $g$ is conjugate to $h$, equation \eqref{eq:conjugacy-equation} has at least one solution. We must modify this solution to find $(\beta,x)$ such that 
\[
\| \beta \|, \| x \| \ll \frac{n^2}{s_\theta(n)}.
\]

\noindent \textit{Case 1: $\alpha = 0$.} Then \eqref{eq:conjugacy-equation} becomes \[y\beta   = \delta.\]
Write $\beta = a + b \theta$. The norm $\|\cdot\|$ is bilipschitz equivalent to the usual supremum norm on $\mathbb Q(\theta)$ with respect to the basis $1,\theta,\theta^2$. Since $ y \beta = \delta$ is a consistent $3\times 2$ integer linear system of equations, with coefficients $\ll n$ and right-hand side $\ll n^2$, the Borosh--Flahive--Rubin--Treybig bound \cite{BFRT89} (as quoted in \cite[Thm.~4]{BR26TwoStep}) gives a solution $\beta$ of size $\|\beta\|\ll n^3$, after deleting dependent rows if necessary. By Lemma~\ref{lemma:estimates-on-n2-mtheta}, $\|\beta\| \ll n^3  \ll n^2 / s_\theta(n)$, so we can take $(\beta,0)$ as the required short solution to \eqref{eq:conjugacy-equation}.

\medskip  

\noindent \textit{Case 2: $\alpha \neq 0$.} Write $\alpha = P - Q \theta$ and define the field norm
\[
N(\alpha) := N_{\mathbb Q(\theta)/\mathbb Q}(\alpha) := \alpha \cdot \sigma_2(\alpha) \cdot \sigma_3(\alpha) = \prod_{i=1}^3 (P - Q\theta_i) = P^3 + a_2 P^2 Q + a_1 PQ^2 + a_0 Q^3 \in \mathbb Z_{\neq 0},
\] 
where the $a_i$ are the coefficients of the minimal polynomial $m_\theta(T)$. 

Notice that $N(\alpha) \in \alpha \mathbb Z[\theta]$. Indeed, one computes that $N(\alpha) = \alpha \alpha^{\#}$ where 
\begin{equation}
 \alpha^{\#} := (P^2 + a_2 PQ + a_1 Q^2) + (PQ + a_2 Q^2) \theta + Q^2 \theta^2 \in \mathbb Z[\theta]. \label{eq:adjugate2}
\end{equation}
Put $M := |N(\alpha)|$. Then $M = \prod_i |\sigma_i(\alpha)| \leq \|\alpha\|^3 \leq n^3. $

Let $(\beta_0,x_0)$ be any solution of \eqref{eq:conjugacy-equation}. Write $\beta_0 = a_0 + b_0 \theta$. Reducing coefficients modulo~$M$ yields $\beta = a + b \theta$ with $ |a|,|b|  \leq M$. Then $\| \beta \| \ll M$. Moreover, $\beta_0 - \beta \in M \mathbb Z[\theta]$ and since $M = \pm N(\alpha) \in \alpha \mathbb Z[\theta]$, we have $y(\beta_0-\beta) \in \alpha \mathbb Z[\theta]$. Since $(\beta_0,x_0)$ solves \eqref{eq:conjugacy-equation}, it follows that 
\[
 y \beta - \delta = (y \beta_0 - \delta) - y(\beta_0-\beta) \in \alpha \mathbb Z[\theta].
\]
Choose $x \in \mathbb Z[\theta]$ such that \(
\alpha x =  y \beta - \delta.
\) Then $(\beta,x)$ solves \eqref{eq:conjugacy-equation}. 
Since $|a|,|b| \leq M$, it follows from $M \leq n^3$  that $\|\beta\| \ll M \ll n^2 / s_\theta(n)$. It remains to bound $\|x\|$ in terms of $n^2 / s_\theta(n)$. 

For each embedding $\sigma_i$, we have \(
|\sigma_i(x)| = | \sigma_i(y \beta - \delta)| / |\sigma_i(\alpha)|. 
\) Hence
\[
\| x \| \leq \frac{\| y \beta - \delta\|}{s(\alpha)}.
\]
Using \(\|xy\| \leq \|x\| \|y\|\), we deduce that \(\|y \beta - \delta\| \leq  \|y\| \|\beta\| + \|\delta\| \ll Mn + n^2,\) hence\[
\|x\| \ll \frac{Mn}{s(\alpha)} + \frac{n^2}{s(\alpha)}.
\]
For the first term, choose an embedding $\sigma_{i_0}$ such that $s(\alpha) = |\sigma_{i_0}(\alpha)|$.  Then 
\[
\frac{Mn}{s(\alpha)} =  n \prod_{i \neq i_0} |\sigma_i(\alpha)|  \leq n \|\alpha \|^2 \leq n^3 \ll \frac{n^2}{s_\theta(n)}.
\]
For the second term, $n^2 / s(\alpha) \leq n^2 / s_\theta(n)$. Together with $\|\beta\| \ll n^2 / s_\theta(n)$, this proves the upper bound.

\medskip 

\noindent \textit{Lower bound} $n^2 / s_\theta(n) \preceq \CL_{L_\theta}(n)$. 
Let $n \geq 1$.  
Let $0 \neq \alpha \in \mathbb Z + \mathbb Z \theta$ with $\|\alpha\| \leq n$ and $s(\alpha) = s_\theta(n)$. 
Let $\alpha^{\#} \in \mathbb Z[\theta]$ with $N(\alpha) = \alpha \alpha^{\#}$ be as in \eqref{eq:adjugate2} above. 
Put $M := |N(\alpha)|.$

First, we construct an element $\eta \in \mathbb Z[\theta]$ such that \begin{equation*}
\|\eta \| \asymp \frac{n^2}{s_\theta(n)}, \qquad \| \alpha \eta \| \leq n^2. 
\end{equation*}
Set $T := \lfloor n^2 / M \rfloor$. 
Since $s_\theta(n) \leq 1$, we have $M \leq s(\alpha) \|\alpha\|^2 \leq n^2$. Hence $T \asymp  n^2 / M$ for all $n \geq 1$. Define $\eta := T \alpha^{\#} \in \mathbb Z[\theta]$. Then 
\[
\| \eta \| = T \|\alpha^\# \| = T \cdot \frac{M}{s(\alpha)} \asymp \frac{n^2}{s(\alpha)},
\]
and $\alpha \eta = T \alpha \alpha^{\#}$, so \(
\| \alpha \eta\| = T M \ll n^2. 
\)

\medskip 

Now we use $\alpha$ and $\eta$ to prove the lower bound. Set $\delta := - \alpha \eta$. Consider $g = (\alpha,0,0)$ and $h = (\alpha, 0, \delta)$. Since $\|\alpha\| \leq n$ and $\| \alpha \eta \| \leq n^2$, we have $\|g\|_{\mathrm{Gv}}, \|h\|_{\mathrm{Gv}} \leq n$. Moreover, $w_0 = (0,\eta,0)$ conjugates $g$ to $h$, since $(0,\eta)$ solves the conjugacy equation \eqref{eq:conjugacy-equation} with $y = 0$.  
Let $w = (\beta,x,z)$ be any conjugator from $g$ to $h$. Then the conjugacy equation yields $-\alpha x = \delta$. Since $\alpha \neq 0$ and $-\alpha \eta = \delta$, this forces $x = \eta$. Therefore every conjugator from $g$ to $h$ has length at least 
\[
\|(\beta,\eta,z)\|_{\mathrm{Gv}}  \geq \| \eta \| \asymp \frac{n^2}{s_\theta(n)}.
\]
This completes the proof.
\end{proof}

\section*{Acknowledgments and AI tool disclosure} 
\noindent GPT-5.5 Pro explored different families of graph Lie algebras for the proof of Theorem~\ref{thm:denseCLspec} and suggested the right one. GPT-5.5 Pro also pointed us to the connection with Khinchin's conjecture on unbounded partial quotients for Theorem~\ref{thm:unboundedpartialquotients}. GPT-5.6 Sol and M365 Copilot were used for copy-editing, grammar checks and proofreading. All proofs were human-written and verified by the authors, who take full responsibility for the content and correctness of the manuscript.

\bibliographystyle{alpha}

\end{document}